\DeclareMathOperator*{\esssup}{ess\,sup}
\DeclareMathOperator*{\essinf}{ess\,inf}
\newcommand{\ee}{\varepsilon}
\newcommand{\N}{{\mathbb N}}
\newcommand{\Q}{{\mathbb Q}}
\newcommand{\R}{{\mathbb R}}
\newcommand{\Z}{{\mathbb Z}}
\newcommand{\cH}{{\mathcal H}}
\newcommand{\cK}{\mathcal K}
\newcommand{\cL}{\mathcal L}
\newcommand{\cM}{\mathcal M}
\newtheorem{thm}{Theorem}[section]
\newtheorem{lem}[thm]{Lemma}
\newtheorem{cor}[thm]{Corollary}
\newtheorem{rem}[thm]{Remark}
\newtheorem{ex}[thm]{Example}
\newcommand{\inte}{{\mathrm{int}}\,}
\newcommand{\relint}{{\mathrm{relint}}\,}
\newcommand{\cl}{{\mathrm{cl}}\,}
\newcommand{\conv}{{\mathrm{conv}}\,}
\newcommand{\lin}{{\mathrm{lin}}\,}
\newcommand{\Id}{{\mathrm{Id}}}
\newcommand{\cE}{{\mathcal E}}
\newcommand{\cV}{{\mathcal V}}
\newcommand{\di}{\diamondsuit}
\begin{document}
\hfill\today
\bigskip

\title{Rearrangement and polarization}
\author[Gabriele Bianchi, Richard J. Gardner, Paolo Gronchi, and Markus Kiderlen]
{Gabriele Bianchi, Richard J. Gardner, Paolo Gronchi, and Markus Kiderlen}
\address{Dipartimento di Matematica e Informatica ``U. Dini", Universit\`a di Firenze, Viale Morgagni 67/A, Firenze, Italy I-50134} \email{gabriele.bianchi@unifi.it}
\address{Department of Mathematics, Western Washington University,
Bellingham, WA 98225-9063,USA} \email{Richard.Gardner@wwu.edu}
\address{Dipartimento di Matematica e Informatica ``U. Dini", Universit\`a di Firenze, Piazza Ghiberti 27, Firenze, Italy I-50122} \email{paolo.gronchi@unifi.it}
\address{Department of Mathematical Sciences, University of Aarhus,
Ny Munkegade, DK--8000 Aarhus C, Denmark} \email{kiderlen@math.au.dk}
\thanks{First and third author supported in part by the Gruppo
Nazionale per l'Analisi Matematica, la Probabilit\`a e le loro
Applicazioni (GNAMPA) of the Istituto Nazionale di Alta Matematica (INdAM). Second author supported in part by U.S.~National Science Foundation Grant DMS-1402929.  Fourth author supported by the Centre for Stochastic Geometry and Advanced Bioimaging, funded by a grant from the Villum Foundation.}
\subjclass[2010]{Primary: 28A20, 52A20; secondary: 46E30, 52A38} \keywords{Convex body, Steiner symmetrization, Schwarz symmetrization, Minkowski symmetrization, two-point symmetrization, rearrangement, polarization}

\maketitle

\begin{abstract}
The paper has two main goals.  The first is to take a new approach to rearrangements on certain classes of measurable real-valued functions on $\R^n$.  Rearrangements are maps that are monotonic (up to sets of measure zero) and equimeasurable, i.e., they preserve the measure of super-level sets of functions.  All the principal known symmetrization processes for functions, such as Steiner and Schwarz symmetrization, are rearrangements, and these have a multitude of applications in diverse areas of the mathematical sciences.  The second goal is to understand which properties of rearrangements characterize polarization, a special rearrangement that has proved particularly useful in a number of contexts. In order to achieve this, new results are obtained on the structure of measure-preserving maps on convex bodies and of rearrangements generally.
\end{abstract}

\section{Introduction}

The idea of replacing an object by one that retains some of its features but is in some sense more symmetrical has been extremely fruitful.  The object may be a set or a function, for example, and the process is then often called symmetrization or rearrangement, respectively.  Steiner symmetrization, introduced by Jakob Steiner around 1836 in his attempt to prove the isoperimetric inequality, is still today a potent tool for establishing crucial inequalities in geometry; see, for example, \cite{HS, LYZ1, LYZ2, LZ}.  The influence of such inequalities, which often have analytical versions, extends far beyond geometry to other areas such as analysis and PDEs, and even outside mathematics, to economics and finance.  The books \cite{BurZ80}, \cite[Chapter~9]{Gar06}, \cite[Chapter~9]{Gru07}, \cite[Chapter~10]{Sch14}, and survey \cite{Gar02} should serve as gateways to the literature.

The topic received a huge boost in 1951 from the classic text of P\'{o}lya and Szeg\"{o} \cite{PS}.  By this time, many other types of symmetrization had been introduced, with similar applications.  The general idea is to find a symmetrization that preserves one physical quantity, while not increasing (or sometimes not reducing) another.  As well as volume, surface area, and mean width, \cite{PS} considers electrostatic capacity, principal frequency (the first eigenvalue of the Laplacian), and torsional rigidity, thereby extending the scope to mathematical physics.  In fact, much of this work was motivated by conjectures of the mathematician-engineer de Saint-Venant (1856) and physicist Strutt (a.k.a.~Lord Rayleigh) (1877), and subsequent work of Hadamard,  Poincar\'e, and others.  The latter included results on rearrangement of functions, already used to great effect in the 1920s by Faber and Krahn.  It turns out that rearranging a function is a notion so fertile that applications arise in areas too diverse for a single text to cover them all in detail: Classical analysis, calculus of variations, complex analysis, convex geometry, geometric measure theory, Banach spaces, potential theory, PDEs, fluid dynamics, mechanics, and meteorology, for example.  Luckily, a beautiful and quite recent survey by Talenti \cite{Tal} contains a comprehensive bibliography, conveniently divided between the main periods of development, from which \cite{B80, Bur, Burt, CR, Dou, F00, Hay, Hen, Kaw, K06, KP, LL, Str}, together with the recent book \cite{Baer}, illustrate the list of areas just mentioned.

The Steiner rearrangement of a function $f$ with respect to an $(n-1)$-dimensional subspace $H$ in $\R^n$, and its natural generalization, the Schwarz rearrangement of $f$ with respect to a $k$-dimensional subspace $H$, are defined via the corresponding symmetrals of its super-level sets; see, for example, \cite[p.~178]{Gru07}.  For convenience, we shall denote either rearrangement by $S_Hf$.  Symmetrals of sets can be identified with rearrangements of their characteristic functions.  The related notion of polarization is more recent but has already stimulated much interest.  According to Solynin \cite[p.~123]{Sol1}, it was first considered for planar sets by Wolontis \cite{Wol} in 1952, and for functions by Ahlfors \cite[p.~34]{A} and Baernstein and Taylor \cite{BT} in the 1970s.  The term itself is due to Dubinin \cite{Dub87}.  The standard {\em polarization} process, sometimes called {\em two-point symmetrization}, with respect to an {\em oriented} $(n-1)$-dimensional (linear) subspace $H$, takes a function $f:\R^n\to \R$ and replaces it by
\begin{equation}\label{pol}
P_Hf(x)=\begin{cases}
\max\{f(x),f(x^{\dagger})\},& {\text{if $x\in H^+$,}}\\
\min\{f(x),f(x^{\dagger})\},& {\text{if $x\in H^-$}},
\end{cases}
\end{equation}
where $^{\dagger}$ denotes the reflection in $H$ and where $H^+$, $H^-$, are the two closed half-spaces bounded by $H$ and determined by its orientation.  If $A\subset \R^n$, then $P_HA$ is the set satisfying $1_{P_HA}=P_H1_A$, where $1_A$ denotes the characteristic function of $A$.  The process has several useful properties: It is equimeasurable, monotonic, $L^p$-contracting, and reduces the modulus of continuity (see Section~\ref{Properties} for the definitions of these terms and references).

The article \cite{BT} demonstrated that polarizations can be more efficient than rearrangements in establishing inequalities involving integrals, and was followed by a number of papers applying polarization to inequalities in the theory of capacities. For example, Dubinin \cite{Dub93} generalized the result of Wolontis \cite{Wol} by showing that the generalized capacity of a condenser in $\R^n$ does not increase under polarization.  Then, in 2000, a landmark study by Brock and Solynin \cite{BS} gave further significance to polarization by showing that the Steiner or Schwarz rearrangements of a function (or symmetrals of a compact set) with respect to a subspace $H$ can be approximated in $L^p(\R^n)$ (or in the Hausdorff metric, respectively) via successive polarizations with respect to a sequence $(H_k)$ of oriented subspaces.  In \cite{BS}, the sequence $(H_k)$ may depend on the function or set, but this dependence was removed by Van Schaftingen \cite{VS1, VS2}.  Indeed, by \cite[Theorem~1 and Section~4.3]{VS2}, the desired approximation of the Steiner or Schwarz rearrangement $S_Hf$ of a suitable function $f$ may be obtained by taking any sequence $(H_k)$ dense in the set of oriented subspaces $J$ such that $J^+$ contains $H$ in its interior and defining $f_1=f$ and
$$f_{k+1}=(P_{H_{k}}\circ P_{H_{k-1}}\circ\cdots\circ P_{H_1})f_{k}$$
for $k\in \N$; then $f_k\to S_Hf$ as $k\to\infty$.  Moreover, polarization is flexible enough to approximate other processes, such as spherical rearrangement and spherical cap symmetrization; see \cite{VS2}.  We refer to \cite[Sections~1.7 and~2.3]{Baer} for a general introduction to polarization. In addition to the references given there and those provided above, more recent work includes \cite{BF, DekV17, Dub14, HacP}.

In \cite{BGG}, an investigation was initiated into symmetrization processes defined, like Steiner symmetrization, with respect to a subspace in $\R^n$.  Characterizations of Steiner symmetrization and others such as Minkowski symmetrization were proved, in terms of basic properties they possess.  It is natural, then, to undertake a similar study with a view to obtaining characterizations of polarization. Here we consider general maps $T:X\to X$, where $X$ is ${\mathcal{M}}(\R^n)$ (or ${\mathcal{M}}_+(\R^n)$), the space of real-valued (or nonnegative, respectively) measurable functions on $\R^n$, the space ${\mathcal{S}}(\R^n)$ of symmetrizable functions in ${\mathcal{M}}(\R^n)$, or the space ${\mathcal{V}}(\R^n)$ of functions in ${\mathcal{M}}_+(\R^n)$ vanishing at infinity.  (See Sections~\ref{subsec:notations} and~\ref{Properties} for definitions and terminology.)  For any $T:X\to X$ we can consider the induced map $\di_T:{\mathcal{L}}^n\to {\mathcal{L}}^n$, where ${\mathcal{L}}^n$ is the class of measurable sets in $\R^n$ of finite measure: If $A\in {\mathcal{L}}^n$, we let $\di_T A$ be the set of all $x\in \R^n$ with $T1_A(x)=1$.

Our new results begin in Section~\ref{General functions} on equimeasurable maps from a subset $X$ of ${\mathcal{M}}(\R^n)$ to itself, i.e., those that preserve the measure of super-level sets, and rearrangements, equimeasurable maps that are also monotonic.  This second and different usage of the term rearrangement---it is now a transformation on a class of functions---is appropriate, since Steiner, Schwarz, and other special rearrangements all have these two properties; see, for example, \cite[Section~II.2]{Kaw}.  Note, however, that the present paper differs in that monotonic really means essentially monotonic, i.e., up to sets of $\cH^n$-measure zero.  The first main result is Lemma~\ref{may8lem}(iii), which states that a rearrangement $T:{\mathcal{M}}(\R^n)\to{\mathcal{M}}(\R^n)$ essentially acts as the identity on constant functions.  Even the special case $T0=0$, essentially, of this natural result seems not to be obvious.  This is applied to prove that a rearrangement $T:{\mathcal{S}}(\R^n)\to{\mathcal{S}}(\R^n)$ essentially satisfies the weak linearity property \eqref{eqoct61} in Lemma~\ref{lemoct6}, which is in turn a crucial ingredient in the second main result, Theorem~\ref{lemapril30}.  The latter provides the explicit formula \eqref{eqoct62} for $Tf$, where $f\in {\mathcal{S}}(\R^n)$, in terms of the induced map $\di_T$ defined above.  Theorem~\ref{lemapril30} is also used in proving Theorem~\ref{coroct241}, which establishes the fundamental formula (see \eqref{eqMe} below)
$\varphi(Tf)=T(\varphi\circ f)$, for $f\in {\mathcal{S}}(\R^n)$ and right-continuous increasing (i.e., non-decreasing) functions $\varphi:\R\to\R$. The formula \eqref{eqoct61}, and versions of \eqref{eqoct62} and Theorem~\ref{coroct241}, appear elsewhere in the literature (compare, for example, \cite[p.~138]{VSW}, \cite[Equations~(3.1) and~(3.6), p.~1762]{BS}, and \cite[Definition~4 and Proposition~3(d)]{VSW}), so we must stress that our approach is quite different and more logical and general. Earlier works such as \cite{BS} and \cite{VSW} begin with set transformations and use them to define special maps on classes of functions, whereas we start with general maps $T$ on classes of functions, define the induced set transformation $\di_T$, and show that in the main situations of interest, $\di_T$ determines $T$.  In particular, \cite[Equation (3.1), p.~1762]{BS}, like \cite[Definition~4]{VSW}, is a definition, not a result, and \cite[Equation (3.6), p.~1762]{BS} and \cite[Proposition~3(d)]{VSW} are deduced from these definitions.  See the Appendix for a more detailed comparison of the two approaches.

Polarization has another basic property in addition to those listed above, namely, it is defined {\em pointwise}, as is clear from (\ref{pol}).  General pointwise maps $T:X\to X$ with respect to an oriented subspace $H$, defined by (\ref{polF}) below, are the focus of Section~\ref{Pointwise}.  Theorem~\ref{thmm1} gives an explicit formula for maps $T:X\to X$ that are both pointwise with respect to $H$ and equimeasurable, where $X={\mathcal{M}}(\R^n)$, ${\mathcal{M}}_+(\R^n)$, ${\mathcal{S}}(\R^n)$, or $\cV(\R^n)$.  The other main result in this section, Theorem~\ref{thmport}, shows that once $T:X\to X$ has these two properties, the others listed above---monotonicity, $L^p$-contracting for $p\ge 1$, and modulus of continuity reducing---are all equivalent and characterize $T$ as being one of four maps: $\Id$, $\dagger$, $P_H$, or $P_H^{\dagger}=\dagger\circ P_H$, where $\Id$ and $^{\dagger}$ denote the identity map and reflection in $H$, respectively.

The pointwise property is a strong one and for the rest of the paper it is discarded.  In Section~\ref{General functions} we examine general maps $T:X\to X$. Our approach is to gain knowledge first about the induced maps $\di_T:{\mathcal{L}}^n\to {\mathcal{L}}^n$. With this aim, in Section~\ref{General sets}, we study general maps $\di:{\mathcal{E}}\subset{\mathcal{L}}^n\to {\mathcal{L}}^n$ between sets in terms of various properties, defined in Section~\ref{Properties}, but for the most part self-explanatory. The first main result is Theorem~\ref{thmm7}, which gives a formula for maps $\di:{\mathcal{K}}^n_n\to {\mathcal{L}}^n$ that are monotonic, measure preserving, respect $H$-cylinders, and map balls to balls, where ${\mathcal{K}}^n_n$ is the set of convex bodies in $\R^n$ and $H$ is an $(n-1)$-dimensional subspace.  The formula (see (\ref{kseg})) shows that for such maps there is a contraction $\varphi_{\di}:\R\to\R$ such that if $K\in {\mathcal{K}}^n_n$, almost all chords of $K$ orthogonal to $H$ are moved orthogonally to $H$ by a distance determined by $\varphi_{\di}$ and the position of the chord relative to $H$.  With this in hand, Corollary~\ref{corlemExtend} states that $\varphi_{\di}(t)=t$, $-t$, $|t|$, or $-|t|$, if and only if $\di$ essentially (i.e., up to sets of the appropriate measure zero) equals $\mathrm{Id}$, $\dagger$, $\di_{P_H}$, or $\di_{P_H}^\dagger=\di_{P_H^\dagger}$, respectively.  The goal then is to find additional or stronger properties that will force $\varphi_{\di}$ to be one of these four functions.  A first attempt replaces the ball-preserving property by one that for measure-preserving maps is stronger and also enjoyed by polarization, namely, that $\di$ preserves perimeter on convex bodies.  However, Theorem~\ref{thmm9convex} shows that maps
$\di:{\mathcal K}_n^n\rightarrow {\mathcal{L}}^n$ that are monotonic, measure preserving, respect $H$-cylinders, and preserve perimeter on convex bodies, are precisely those for which the contraction $\varphi_{\di}$ satisfies the eikonal equation $|\varphi_{\di}'(t)|=1$ for almost all $t\in \R$.  There are clearly solutions to the latter equation other than $\varphi_{\di}(t)=\pm t$ or $\pm |t|$.  To achieve our goal, we find it necessary to focus on maps $\di:{\mathcal E}\rightarrow {\mathcal{L}}^n$, where ${\mathcal E}$ is either the class ${\mathcal{C}}^n$ of compact sets in $\R^n$ or ${\mathcal{L}}^n$, and replace the property that $\di$ respects $H$-cylinders by the stronger one of invariance on $H$-symmetric unions of two disjoint balls.  (Note that while this condition may seem peculiar, it is much weaker than the natural assumption that $\di$ is invariant on all $H$-symmetric sets.)  Thus in Theorem~\ref{thmm9measurable}, we prove that if ${\mathcal{E}}={\mathcal{C}}^n$ or ${\mathcal{L}}^n$ and
$\di:{\mathcal E}\rightarrow {\mathcal{L}}^n$ is monotonic, measure preserving, perimeter preserving on convex bodies, and invariant on $H$-symmetric unions of two disjoint balls, then $\di$ essentially equals $\mathrm{Id}$, $\dagger$, $\di_{P_H}$, or $\di_{P_H}^\dagger$.

Since the maps $\di:{\mathcal{E}}\subset{\mathcal{L}}^n\to {\mathcal{L}}^n$ we study in Section~\ref{General sets} include but are not necessarily symmetrizations, this part of our paper may be viewed as a widening of the scope of \cite{BGG}.

Armed with Theorem~\ref{thmm9measurable}, we prove in Theorem~\ref{thmm10} that if $T:{\mathcal{S}}(\R^n)\to{\mathcal{S}}(\R^n)$ or $T:\cV(\R^n)\rightarrow \cV(\R^n)$ is a rearrangement, and the induced map $\di_T$ is perimeter preserving on convex bodies and invariant on $H$-symmetric unions of two disjoint balls, then $T$ essentially equals $\Id$, $\dagger$, ${P_H}$, or ${P_H}^{\dagger}$.  For maps $T:X\to X$, where $X={\mathcal{M}}(\R^n)$ or ${\mathcal{M}}_+(\R^n)$, the same properties allow the same conclusion for the restriction of $T$ to ${\mathcal{S}}(\R^n)$ or $\cV(\R^n)$, respectively, though not for the unrestricted map.  This is shown in Theorem~\ref{may9cor}.  Both Theorems~\ref{thmm10} and~\ref{may9cor} depend on the main results from Section~\ref{General functions}.

As was mentioned earlier, polarization is $L^p$-contracting and reduces the modulus of continuity, but since compositions of polarizations with respect to different oriented subspaces retain these two properties, they do not seem so useful in classifying polarization among rearrangements.

Throughout the paper we provide examples showing that our main results are best possible in the sense that none of the assumed properties can be omitted or significantly weakened.

We are grateful to David Preiss for communicating the construction in Remark~\ref{remthmm9convex}.

\section{Preliminaries}\label{subsec:notations}

As usual, $S^{n-1}$ denotes the unit sphere and $o$ the origin in Euclidean $n$-space $\R^n$.  Unless stated otherwise, we assume throughout that $n\ge 2$.   The standard orthonormal basis for $\R^n$ is $\{e_1,\dots,e_n\}$ and the Euclidean norm is denoted by $\|\cdot\|$.  The term {\em ball} in $\R^n$ will always mean a {\em closed} $n$-dimensional ball unless otherwise stated.  The unit ball in $\R^n$ will be denoted by $B^n$ and $B(x,r)$ is the ball with center $x$ and radius $r$. If $x,y\in \R^n$ we write $x\cdot y$ for the inner product and $[x,y]$ for the line segment with endpoints $x$ and $y$. If $x\in \R^n\setminus\{o\}$, then $x^{\perp}$ is the $(n-1)$-dimensional subspace orthogonal to $x$. Throughout the paper, the term {\em subspace} means a linear subspace.

If $A$ is a set,  we denote by $\lin A$, $\conv A$, $\cl A$, $\inte A$, $\relint A$, and $\dim A$ the {\it linear hull}, {\it convex hull}, {\it closure},  {\it interior}, {\it relative interior}, and {\it dimension} (that is, the dimension of the affine hull) of $A$, respectively.  If $H$ is a subspace of $\R^n$, then $A|H$ is the (orthogonal) projection of $A$ on $H$ and $x|H$ is the projection of a vector $x\in \R^n$ on $H$.

If $A$ and $B$ are sets in $\R^n$ and $t\ge 0$, then $tA=\{tx:x\in A\}$ and
$$A+B=\{x+y: x\in A, y\in B\}$$
denotes the {\em Minkowski sum} of $A$ and $B$.

When $H$ is a fixed subspace of $\R^n$, we use $A^{\dagger}$ for the {\em reflection} of $A$ in $H$, i.e., the image of $A$ under the map that takes $x\in \R^n$ to $2(x|H)-x$.  If $A^{\dagger}=A$, we say $A$ is {\em $H$-symmetric}. If $H=\{o\}$, we instead write $-A=(-1)A$ for the reflection of $A$ in the origin and {\it $o$-symmetric} for $\{o\}$-symmetric. A set $A$ is called {\em rotationally symmetric} with respect to $H$ if for $x\in H$, $A\cap (H^{\perp}+x)=r_x(B^n\cap H^{\perp})+x$ for some $r_x\ge 0$.  If $\dim H=n-1$, then a compact convex set is rotationally symmetric with respect to $H$ if and only if it is $H$-symmetric.  The term {\em $H$-symmetric spherical cylinder} will always mean a set of the form $$(B(x,r)\cap H)+s(B^n\cap H^{\perp})=(B(x,r)\cap H)\times s(B^n\cap H^{\perp}),$$
where $r,s>0$.  Of course, $H$-symmetric spherical cylinders are rotationally symmetric with respect to both $H$ and $H^{\perp}$.

The phrase {\em translate orthogonal to $H$} means translate by a vector in $H^{\perp}$.

We write ${\mathcal H}^k$ for $k$-dimensional Hausdorff measure in $\R^n$, where $k\in\{1,\dots, n\}$.  When dealing with relationships between sets in $\R^n$ or functions on $\R^n$, the term {\em essentially} means up to a set of $\cH^n$-measure zero.

The Grassmannian of $k$-dimensional subspaces in $\R^n$ is denoted by ${\mathcal{G}}(n,k)$.

We denote by ${\mathcal C}^n$, ${\mathcal G}^n$, ${\mathcal B}^n$, ${\mathcal M}^n$, and ${\mathcal L}^n$ the class of nonempty compact sets, open sets, bounded Borel sets, ${\mathcal H}^n$-measurable sets, and ${\mathcal H}^n$-measurable sets of finite ${\mathcal H}^n$-measure, respectively, in $\R^n$.  Let ${\mathcal K}^n$ be the class of nonempty compact convex subsets of $\R^n$ and let ${\mathcal K}^n_n$ be the class of {\em convex bodies}, i.e., members of ${\mathcal K}^n$ with interior points.  If $K\in {\mathcal K}^n$, then
$$
h_K(x)=\sup\{x\cdot y: y\in K\},
$$
for $x\in\R^n$, defines the {\it support function} $h_K$ of $K$.  The texts by Gruber \cite{Gru07} and Schneider \cite{Sch14} contain a wealth of useful information about convex sets and related concepts such as the {\em intrinsic volumes} $V_j$, $j\in\{1,\dots, n\}$ (see also \cite[Appendix~A]{Gar06}).  In particular, if $K\in {\mathcal K}^n$ and  $\dim K=n$ then $2V_{n-1}(K)$ is the {\em surface area} of $K$. If $\dim K=k$, then $V_k(K)={\mathcal H}^k(K)$ is the {\em volume} of $K$.  By $\kappa_n$ we denote the volume ${\mathcal H}^n(B^n)$ of the unit ball in $\R^n$.

Let ${\mathcal{M}}(\R^n)$ (or ${\mathcal{M}}_+(\R^n)$) denote the set of real-valued (or nonnegative, respectively) measurable functions on $\R^n$ and let ${\mathcal{S}}(\R^n)$ denote the set of functions $f$ in ${\mathcal{M}}(\R^n)$ such that ${\mathcal{H}}^{n}(\{x:f(x)>t\})<\infty$ for $t>\essinf f$.  By ${\mathcal{V}}(\R^n)$, we denote the set of functions $f$ in ${\mathcal{M}}_+(\R^n)$ such that ${\mathcal{H}}^{n}(\{x:f(x)>t\})<\infty$ for $t>0$.  The four classes of functions satisfy
${\mathcal{V}}(\R^n)\subset {\mathcal{S}}(\R^n)\subset {\mathcal{M}}(\R^n)$ and ${\mathcal{V}}(\R^n)\subset {\mathcal{M}}_+(\R^n)\subset {\mathcal{M}}(\R^n)$.  Members of ${\mathcal{S}}(\R^n)$ have been called {\em symmetrizable} (see, e.g., \cite{BS}) and those of ${\mathcal{V}}(\R^n)$ are often said to {\em vanish at infinity}.  Note that the constant functions are symmetrizable but do not vanish at infinity unless they are essentially zero.

If $T:X\to X$, we shall usually write $Tf$ instead of $T(f)$.  If $T_0,T_1:X\to X$ are maps, we say that $T_0$ is {\em essentially equal} to $T_1$ if for $f\in X$, $T_0f(x)=T_1f(x)$ for ${\mathcal{H}}^{n}$-almost all $x\in \R^n$, where the exceptional set may depend on $f$.

\section{Properties of maps}\label{Properties}

Let $i\in \{1,\dots,n-1\}$, let $H\in {\mathcal{G}}(n,i)$ be fixed, and recall that $K^{\dagger}$ is the reflection of $K$ in $H$.  We consider the following properties of a map $\di:{\mathcal{E}}\subset{\mathcal{L}}^n\to {\mathcal{L}}^n$, where it is assumed (here and throughout the paper) that they hold for all $K,L\in {\mathcal{E}}$, that the class ${\mathcal{E}}$ is appropriate for the property concerned, and that sets of ${\mathcal H}^n$-measure zero are ignored.

\smallskip

1. ({\em Monotonic} or {\em strictly monotonic}) \quad $K\subset L \Rightarrow \di K\subset \di L$ (or $\di K\subset \di L$ and $K\neq L \Rightarrow \di K\neq \di L$, respectively).

2. ({\em Measure preserving}) \quad ${\mathcal H}^n(\di K)={\mathcal H}^n(K)$.

3. ({\em Invariant on $H$-symmetric sets})\quad $K^{\dagger}=K\Rightarrow\di K=K$.

4. ({\em Invariant on $H$-symmetric spherical cylinders})\quad If $K=(B(x,r)\cap H)+s(B^n\cap H^{\perp})$, where $r,s>0$ and $x\in\R^n$, then $\di K=K$.

5. ({\em Maps balls to balls})\quad If $K=B(x,r)$, then $\di K=B(x',r')$.

6. ({\em Respects $H$-cylinders})\quad If $K\subset (B(x,r)\cap H)+H^{\perp}$, then $\di K\subset (B(x,r)\cap H)+H^{\perp}$.

\smallskip

Clearly invariance on $H$-symmetric sets implies invariance on $H$-symmetric spherical cylinders.  If ${\mathcal{E}}\subset{\mathcal{B}}^n$ and $\di$ is monotonic and invariant on $H$-symmetric spherical cylinders, then $\di$ respects $H$-cylinders. The assumption ${\mathcal{E}}\subset{\mathcal{B}}^n$ cannot be omitted here, as the following example for $n=2$ shows. Let $H$ be a one-dimensional subspace in $\R^2$, and let $R_{H^{\perp}}E$ denote the reflection of $E\in \cL^2$ in $H^\perp$. Define $\di :\cL^2\to \cL^2$ by
$$
\di E=
\begin{cases}
E, & \text{if $E$ is essentially bounded}, \\
E\cup R_{H^{\perp}}E, &\text{ otherwise}.
\end{cases}
$$
Then $\di$ is invariant on all essentially bounded sets, and in particular on all spherical cylinders. The mapping $\di$ is also monotonic, but does not respect $H$-cylinders.  In Lemma~\ref{lemmar9}, we show that this conclusion can be drawn if additional conditions are imposed.

We need one further property.

\smallskip

7. ({\em Perimeter preserving on convex bodies}) \quad  For each $K\in {\mathcal{K}}^n_n$, $\di K$ is a set of finite perimeter such that $S(\di K)=S(K)=2V_{n-1}(K)$, where $S$ denotes perimeter (see, for example, \cite[p.~170]{EG}).

\smallskip

Let $X\subset {\mathcal{M}}(\R^n)$.  We consider the following properties of a map $T:X\to X$, where the properties are assumed to hold for all $f,g\in X$:

\smallskip

1. ({\em Equimeasurable})\quad
\begin{equation}\label{equi}
{\mathcal{H}}^{n}(\{x: Tf(x)>t\})={\mathcal{H}}^{n}(\{x: f(x)>t\})
\end{equation}
for $t\in \R$.

2. ({\em Monotonic}) \quad $f\le g$, essentially, implies $Tf\le Tg$, essentially.

3. ({\em $L^p$-contracting}) \quad $\|Tf-Tg\|_p\le \|f-g\|_p$ when $f-g\in L^p(\R^n)$.

4. ({\em Modulus of continuity reducing}) \quad $\omega_d(Tf)\le \omega_d(f)$ for $d>0$, where
\begin{equation}\label{omg}
\omega_d(f)=\esssup_{\|x-y\|\le d}|f(x)-f(y)|
\end{equation}
is the {\em modulus of continuity} of $f\in X$.

\smallskip

The map $T$ is called a {\em rearrangement} if it is equimeasurable and monotonic.

If $X$ contains the characteristic functions of sets in ${\cL}^n$, $T:X\to X$, and $A\in \cL^n$, let $\di_T A=\{x: T1_A(x)=1\}$.
In Lemma~\ref{fromSetToFct}(i) below, it is shown that the induced map $\di_T:\cL^n\to \cL^n$ is well defined when $T$ is equimeasurable and $X={\mathcal{M}}(\R^n)$, ${\mathcal{M}}_+(\R^n)$, ${\mathcal{S}}(\R^n)$, or $\cV(\R^n)$.

Two further properties of $T$ depend on some $H\in {{\mathcal{G}}(n,n-1)}$ which in the second case is oriented and bounds closed half-spaces $H^+$ and $H^-$.

\smallskip

5. ({\em Invariant on $H$-symmetric spherical cylinders})\quad If $K=(B(x,r)\cap H)+s(B^n\cap H^{\perp})$, where $r,s>0$, then $\di_T K$ is well defined and $\di_T K=K$, essentially.

6. ({\em Pointwise with respect to $H$}) \quad \begin{equation}\label{polF}
Tf(x)=\begin{cases}
F^+(f(x),f^{\dagger}(x)),& {\text{if $x\in H^+$,}}\\
F^-(f(x),f^{\dagger}(x)),& {\text{if $x\in H^-$}},
\end{cases}
\end{equation}
where $f^{\dagger}(x)=f(x^{\dagger})$ is the reflection
of $f$ in $H$ and where $F^+, F^-:D^2\to D$ coincide on the diagonal $\{(s,s):s\in D\}$. Here
$$D=\{f(x): x\in \R^n,~f\in X\}.$$
Thus $D=\R$ if $X={\mathcal{M}}(\R^n)$ or ${\mathcal{S}}(\R^n)$ and $D=[0,\infty)$ if $X={\mathcal{M}}_+(\R^n)$ or ${\mathcal{V}}(\R^n)$, and in each case $D^2$ is the common domain of $F^+$ and $F^-$.

\smallskip

The pointwise property is inspired by the pointwise operations defined in \cite{GK}. The functions $F^+$ and $F^-$ are said to be {\em associated with} $T$.  One can consider special associated functions such as the {\em $p$th means} $M_p(s,t)=(|s|^p+|t|^p)^{1/p}$ for $p>0$, $M_\infty(s,t)=\max\{s,t\}$, and $M_{-\infty}(s,t)=\min\{s,t\}$.  For $p<0$, one can define $M_p(s,t)=(|s|^p+|t|^p)^{1/p}$, if $st\neq 0$, and $M_p(s,t)=0$, otherwise.  Then polarization (\ref{pol}) corresponds to taking $F^+=M_{\infty}$ and $F^-=M_{-\infty}$ in (\ref{polF}).

Again taking a cue from \cite{GK}, one might consider the following more general version of the pointwise property:
\begin{equation}\label{polgen}
Tf(x)=\begin{cases}
(f*^+f^{\dagger})(x),& {\text{if $x\in H^+$,}}\\
(f*^-f^{\dagger})(x),& {\text{if $x\in H^-$}},
\end{cases}
\end{equation}
where $*^+$ and $*^-$ are two operations between functions on $\R^n$. To assure that \eqref{polgen} is well defined, one would require that $f*^+f^{\dagger}=f*^-f^{\dagger}$ on $H$. Then (\ref{pol}) corresponds to taking $f*^+g=\max\{f,g\}$ and $f*^-g=\min\{f,g\}$.   However, the apparent restriction in the definition (\ref{polgen}) is an illusion, since $f*^+f^{\dagger}$ and $f*^-f^{\dagger}$ may as well be replaced by arbitrary functions of $f$.

Polarization, defined by (\ref{pol}), has all the properties 1--6. For properties~1 and~4, see \cite[Propositions~1.35 and~1.37]{Baer}, respectively, noting that the general assumption in \cite[p.~1]{Baer} that $f\in{\mathcal{S}}(\R^n)$ is not necessary.  Properties~2 and~5 are clear and~6 was discussed above. Property~3 seems only to have been stated with unnecessary extra assumptions, so we provide a proof in Theorem~\ref{thmport} below.
	
\section{Equimeasurable maps and rearrangements}\label{General functions}

If $f\in {\mathcal{V}}(\R^n)$, then $\essinf f=0$ and it follows that if $T:{\mathcal{V}}(\R^n)\to{\mathcal{V}}(\R^n)$, then $\essinf Tf=\essinf f$. We now examine the situation for the other classes of functions.

\begin{lem}\label{may8lem}
\noindent{\rm{(i)}}  If $T:{\mathcal{S}}(\R^n)\to {\mathcal{S}}(\R^n)$ is equimeasurable, then $\essinf Tf=\essinf f$ for $f\in {\mathcal{S}}(\R^n)$.

\noindent{\rm{(ii)}} If $T:{\mathcal{M}}(\R^n)\to {\mathcal{M}}(\R^n)$ is a rearrangement, then $\essinf Tf\ge \essinf f$ for $f\in {\mathcal{M}}(\R^n)$.   Hence, $T:{\mathcal{S}}(\R^n)\to {\mathcal{S}}(\R^n)$.

\noindent{\rm{(iii)}} In either case, $T:\cV(\R^n)\to \cV(\R^n)$ and $T$ is essentially the identity on constant functions.
\end{lem}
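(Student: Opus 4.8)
The plan is to establish the three assertions in the order (i), then (ii) (which builds on (i) via equimeasurability), and finally (iii), which is largely a consequence of the first two together with a separate argument for the action on constant functions.

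For (i), fix $f\in{\mathcal{S}}(\R^n)$ and write $m=\essinf f$, $m'=\essinf Tf$. Since $T$ maps ${\mathcal{S}}(\R^n)$ into itself, $Tf\in{\mathcal{S}}(\R^n)$, so both quantities are finite. The key observation is that $\essinf g$ can be recovered from the family of super-level set measures: $\essinf g=\sup\{t:{\mathcal H}^n(\{g>t\})=\infty\}$ when some super-level set has infinite measure, but for symmetrizable $g$ every super-level set above $\essinf g$ has finite measure, so instead one uses $\essinf g=\inf\{t:{\mathcal H}^n(\{g>t\})<\infty\}$ — no, more carefully: for $g\in{\mathcal{S}}(\R^n)$, ${\mathcal H}^n(\{g>t\})<\infty$ exactly for $t>\essinf g$, but it may be finite or infinite at $t=\essinf g$; in any case $\{g>t\}$ has full measure (i.e., its complement is null) for $t<\essinf g$ and this forces ${\mathcal H}^n(\{g>t\})=\infty$ for such $t$ since $n\ge 2$. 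Hence $\essinf g=\inf\{t\in\R:{\mathcal H}^n(\{g>t\})<\infty\}$ for every $g\in{\mathcal{S}}(\R^n)$. Applying this to $g=f$ and $g=Tf$ and using \eqref{equi}, which says the two families of super-level set measures coincide, gives $m=m'$ immediately.

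For (ii), let $T:{\mathcal{M}}(\R^n)\to{\mathcal{M}}(\R^n)$ be a rearrangement and fix $f\in{\mathcal{M}}(\R^n)$ with $m=\essinf f$ (possibly $-\infty$). If $m=-\infty$ there is nothing to prove, so assume $m\in\R$. Then $f\ge m$ essentially, so by monotonicity $Tf\ge T(m\cdot 1_{\R^n})$ essentially, where $m\cdot 1_{\R^n}$ denotes the constant function. Thus it suffices to show $\essinf T(c)\ge c$, essentially, for every constant function $c$. Here is where equimeasurability re-enters: for the constant function $c$, every super-level set $\{c>t\}$ equals $\R^n$ if $t<c$ and $\emptyset$ if $t\ge c$; hence ${\mathcal H}^n(\{Tc>t\})={\mathcal H}^n(\{c>t\})$ equals $\infty$ for $t<c$ and $0$ for $t\ge c$. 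The vanishing for $t\ge c$ says $Tc\le c$ essentially; the infiniteness for $t<c$ says ${\mathcal H}^n(\{Tc>t\})=\infty$ for all $t<c$, which combined with $Tc\le c$ forces $Tc=c$ essentially. In particular $\essinf T(m\cdot 1_{\R^n})=m$, so $\essinf Tf\ge m$. Since $\essinf Tf\ge\essinf f$ and, when $f\in{\mathcal{S}}(\R^n)$, every super-level set of $Tf$ above $\essinf f$ has the same finite measure as the corresponding one for $f$, we get $Tf\in{\mathcal{S}}(\R^n)$, establishing the restriction statement $T:{\mathcal{S}}(\R^n)\to{\mathcal{S}}(\R^n)$.

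For (iii): that $T$ maps ${\mathcal{V}}(\R^n)$ into ${\mathcal{V}}(\R^n)$ follows because $f\in{\mathcal{V}}(\R^n)$ means $f\ge 0$ essentially with all super-level sets above $0$ of finite measure; monotonicity (as in (ii), via $f\ge 0\cdot 1_{\R^n}$ and $T(0)=0$ essentially) gives $Tf\ge 0$ essentially, and equimeasurability preserves finiteness of the super-level set measures, so $Tf\in{\mathcal{V}}(\R^n)$; in case (i) one argues identically once $T(0)=0$ is known, and that in turn follows from the equimeasurability computation above applied to $c=0$. The remaining claim — that $T$ is essentially the identity on constant functions — is exactly what the equimeasurability argument in (ii) already delivered: for each constant $c$ in the relevant domain $D$, the super-level sets of $c$ are $\R^n$ or $\emptyset$, forcing ${\mathcal H}^n(\{Tc>t\})\in\{0,\infty\}$ with the jump at $t=c$, whence $Tc=c$ essentially. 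The main obstacle, and the subtle point the authors flag, is precisely the step ``${\mathcal H}^n(\{Tc>t\})=\infty$ for all $t<c$ together with $Tc\le c$ essentially implies $Tc=c$ essentially'': one must rule out, say, $Tc$ taking values strictly below $c$ on a set whose complement is null but which still leaves every strict super-level set of infinite measure — this is impossible because if $Tc<c$ on a non-null set $E$ then for $t$ just below $c$ the set $\{Tc>t\}$ misses a non-null portion of $E$, yet that alone does not contradict infinite measure; the actual contradiction requires noting that $\{Tc\le t\}$ must be null for every $t<c$ (since its measure is $\infty-\infty$ is not the issue — rather ${\mathcal H}^n(\{Tc\le t\})={\mathcal H}^n(\R^n)-{\mathcal H}^n(\{Tc>t\})$ makes no sense when both are infinite). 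I expect the clean route is instead: equimeasurability gives ${\mathcal H}^n(\{Tc>t\})=0$ for $t\ge c$, so $Tc\le c$ a.e.; and ${\mathcal H}^n(\{Tc>t\})={\mathcal H}^n(\{Tc\ge s\})$ arguments together with right-continuity of $t\mapsto{\mathcal H}^n(\{Tc>t\})$ and the fact that this function is infinite for $t<c$ and zero for $t\ge c$ pin down $Tc=c$ a.e. Handling this jump-discontinuity carefully — distinguishing ${\mathcal H}^n(\{Tc>c^-\})$ from ${\mathcal H}^n(\{Tc\ge c\})$ and using $n\ge 2$ so that ``full measure'' means ``infinite measure'' — is the one place the argument is less automatic than it looks, which is why the authors single out even $T0=0$ as non-obvious.
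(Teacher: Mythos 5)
Your part (i) is essentially the same argument as the paper's (both read off $\essinf$ from the jump in $t\mapsto\cH^n(\{g>t\})$), and your part (iii) has the correct structure once (ii) is in hand. But part (ii) has a genuine gap, and it sits exactly where you flagged uncertainty. You reduce to showing $Tc=c$ essentially for constants $c$ and then claim that $\cH^n(\{Tc>t\})=\infty$ for all $t<c$, together with $Tc\le c$ essentially, forces $Tc=c$ essentially. That implication is false: let $H^-$ be a closed half-space and set $Tc(x)=c$ for $x\notin H^-$ and $Tc(x)=c-1$ for $x\in H^-$. Then $Tc\le c$ everywhere, $\cH^n(\{Tc>t\})=\infty$ for every $t<c$ and $=0$ for $t\ge c$, so $Tc$ is equimeasurable with $c$, yet $\essinf Tc=c-1<c$. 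So super-level set measures plus the one-sided bound do not pin down $Tc$, and neither right-continuity of $t\mapsto\cH^n(\{Tc>t\})$ nor any purely measure-theoretic manipulation will close the gap — indeed the paper's Example~\ref{aug30ex2} exhibits a genuine rearrangement $T:{\mathcal{M}}_+(\R^n)\to{\mathcal{M}}_+(\R^n)$ with $\essinf Tc<c$ for constants, so any proof that worked with the tools you invoke (equimeasurability and monotonicity, tested only against constants) would also apply to ${\mathcal{M}}_+(\R^n)$ and contradict that example.

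The missing idea is that on ${\mathcal{M}}(\R^n)$ one can test monotonicity against functions that are unbounded below — this is precisely what separates ${\mathcal{M}}(\R^n)$ from ${\mathcal{M}}_+(\R^n)$. The paper's proof of (ii) does not reduce to constants. Instead, assuming $\essinf f=a>\essinf Tf$, it finds $t>0$ with $\cH^n(E)>0$ where $E=\{Tf\le a-t\}$, introduces the cone $f_0(x)=a-\|x\|\le f$ (note $f_0\notin{\mathcal{M}}_+(\R^n)$, so the argument is unavailable there), picks $t_0\ge t$ with $\cH^n(\{Tf_0>a-t_0\}\cap E)>0$, and then builds an intermediate $g$ with $f_0\le g\le f$ by raising $f_0$ to $a-t/2$ on $t_0B^n$. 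Monotonicity gives $\{Tf_0>a-t_0\}\subset\{Tg>a-t_0\}$, equimeasurability forces equality, and because $Tg\le Tf\le a-t$ on $E$, the set $\{Tg>a-3t/4\}$ misses $E$; comparing measures (all equal to $\cH^n(t_0B^n)$) then identifies $\{Tg>a-3t/4\}$ with $\{Tf_0>a-t_0\}$, contradicting the fact that the latter meets $E$ on a set of positive measure. You would need this construction — or something with the same reach into unbounded-below test functions — before the reduction to constants in (ii) can be justified, and then (iii) follows as you describe.
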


\begin{proof}
(i) Suppose that $\essinf Tf\neq\essinf f$ for some $f\in {\mathcal{S}}(\R^n)$.  Choose $t\in \R$ strictly between $\essinf Tf$ and $\essinf f$ and note that (\ref{equi}) is violated, since one side is finite and the other infinite.

(ii) Throughout the proof, we shall ignore sets of $\cH^n$-measure zero. Let $f\in {\mathcal{M}}(\R^n)$.  Suppose that $\essinf f=a>\essinf Tf$.  Then there is a $t>0$ such that
$$E=\{x : Tf(x)\leq a-t\}$$
has positive $\cH^n$-measure.

Let $f_0(x)=a-\|x\|$ for $x\in \R^n$.  Then $f_0\in {\mathcal{M}}(\R^n)$ and $f_0\le f$, so the monotonicity of $T$ implies that $Tf_0\le Tf$.  Consequently, we may choose $t_0>0$ such that
\begin{equation}\label{finite_value_on_E}
\cH^n(\{x : Tf_0(x)>a-t_0\}\cap E)>0.
\end{equation}
Note that $t_0\ge t$ by the definition of $E$.  Define
$$g(x)=
\begin{cases}
\max\{f_0(x),a-t/2\},& {\text{if $x\in t_0B^n$,}}\\
f_0(x),& {\text{if $x\notin t_0B^n$}}.
\end{cases}
$$
Clearly $g\in {\mathcal{M}}(\R^n)$, $f_0\leq g\leq f$, and
\begin{equation}\label{f_and_g_coincide}
\{x : f_0(x)>a-t_0\}=\{x : g(x)>a-t_0\}=t_0B^n.
\end{equation}
We have
\begin{equation}\label{Tf_equal_Tg}
 \{x : Tf_0(x)>a-t_0\}=\{x : Tg(x)>a-t_0\},
\end{equation}
because the monotonicity of $T$ implies that the set on the right contains the set on the left, and the two sets have the same $\cH^n$-measure, by \eqref{f_and_g_coincide} and the equimeasurability of $T$.

The monotonicity of $T$ and $g\leq f$ imply that $Tg\leq Tf$. In particular, $Tg(x)\leq a-t$ when $x\in E$, so
\begin{equation}\label{outside_E}
 \{x : Tg(x)>a-3t/4\}\cap E=\emptyset.
\end{equation}
Since $t_0\ge t$, we have $\{x : Tg(x)>a-3t/4\}\subset\{x : Tg(x)>a-t_0\}$, so \eqref{Tf_equal_Tg} yields
\begin{equation}\label{inclusion_Tg}
\{x : Tg(x)>a-3t/4\}\subset\{x : Tf_0(x)>a-t_0\}.
\end{equation}
Moreover,
\begin{eqnarray}\label{equal_measure_Tg}
\cH^n(\{x : Tg(x)>a-3t/4\})&=&\cH^n(\{x : g(x)>a-3t/4\})\nonumber\\
&=&\cH^n(t_0B^n)=\cH^n(\{x : f_0(x)>a-t_0\})\nonumber\\
&=&\cH^n(\{x : Tf_0(x)>a-t_0\}).
\end{eqnarray}
Formulas \eqref{inclusion_Tg} and \eqref{equal_measure_Tg} imply that
$$\{x : Tg(x)>a-3t/4\}=\{x : Tf_0(x)>a-t_0\}.$$
But this contradicts \eqref{finite_value_on_E} and \eqref{outside_E} and proves that $\essinf Tf\ge \essinf f$.

Let $f\in {\mathcal{S}}(\R^n)$.  If $t>\essinf Tf$, then $t>\essinf f$, so by \eqref{equi} and the definition of ${\mathcal{S}}(\R^n)$, we have
$$\cH^n(\{x: Tf(x)>t\})=\cH^n(\{x: f(x)>t\})<\infty.$$
Therefore $Tf\in {\mathcal{S}}(\R^n)$, as required.

(iii) If $T:{\mathcal{S}}(\R^n)\to {\mathcal{S}}(\R^n)$ is equimeasurable and $f\in {\cV}(\R^n)$, then $\essinf Tf=\essinf f=0$ by (i), so $Tf\in {\cV}(\R^n)$.  If $T:{\mathcal{M}}(\R^n)\to {\mathcal{M}}(\R^n)$ is a rearrangement and $f\in {\cV}(\R^n)$, then $\essinf Tf\ge \essinf f=0$ by (ii).  This and (\ref{equi}) imply that $Tf\in {\cV}(\R^n)$.  That $T$ is essentially the identity on constant functions is immediate in case (i).  In case (ii), if $f\equiv c$ is constant, we obtain $Tc\ge c$.  This and the equimeasurability of $T$ yield $Tc=c$.
\end{proof}

\begin{ex}\label{may1ex2}
{\rm  Lemma~\ref{may8lem}(ii) and~(iii) do not hold in general if $T: {\mathcal{M}}(\R^n)\to {\mathcal{M}}(\R^n)$ is only assumed to be equimeasurable.
To see this, let
\begin{equation}\label{g2M}
g_1(x)=\left\{\begin{array}{ll}
-1/\|x\|,&x\neq o,\\
0,&x=o.
\end{array}  \right.
\end{equation}
If $A\in \cL^n$, define
$$T_01_A(x)=\begin{cases}
1,& {\text{if $x\in A$,}}\\
g_1(x),& {\text{if $x\not\in A$.}}
\end{cases}$$
Then (\ref{equi}) holds with $f=1_A$ and we may extend $T_0$ arbitrarily to an equimeasurable map from ${\mathcal{M}}(\R^n)$ to itself.  Since $T_01_A\not\in {\mathcal{S}}(\R^n)$, it is not even true that $T_0:{\cV}(\R^n)\to {\mathcal{S}}(\R^n)$.  For the remaining statements in Lemma~\ref{may8lem}(ii) and~(iii), define $T_10=g_1$ and extend the definition of $T_1$ arbitrarily to an equimeasurable map from ${\mathcal{M}}(\R^n)$ to itself. Then $\essinf T_10=-\infty<0=\essinf 0$ and $T_10$ is not a constant function.}
\qed
\end{ex}

If $T: {\mathcal{M}}_+(\R^n)\to {\mathcal{M}}_+(\R^n)$ is equimeasurable, then $T: {\cV}(\R^n)\to {\cV}(\R^n)$ follows immediately from (\ref{equi}).  However, the following example shows that the other statements in Lemma~\ref{may8lem}(ii) and~(iii) do not hold in general if $T: {\mathcal{M}}_+(\R^n)\to {\mathcal{M}}_+(\R^n)$ is a rearrangement.

\begin{ex}\label{aug30ex2}
{\rm  There is a rearrangement $T: {\mathcal{M}}_+(\R^n)\to {\mathcal{M}}_+(\R^n)$ such that $\essinf Tf <\essinf f$ for some $f\in {\mathcal{M}}_+(\R^n)$.  Indeed, define $T$ by letting
$$Tf(x)=\begin{cases}
f(x),& {\text{if $x_1<0$,}}\\
0,& {\text{if $0\le x_1\le 1$,}}\\
f(x-e_1),& {\text{if $x_1>1$,}}
\end{cases}$$
for $f\in {\mathcal{M}}_+(\R^n)$ and $x=(x_1,\dots,x_n)\in\R^n$.  It is easy to check that $T$ is a rearrangement.  If $f\equiv c$ is a constant function and $c>0$, then $\essinf Tf = 0 < c=\essinf f$. It is also not true that $T$ essentially maps constant functions to constant functions.}
\qed
\end{ex}

\begin{ex}\label{aug26ex}
{\rm If $X={\mathcal{M}}(\R^n)$ or ${\mathcal{M}}_+(\R^n)$, there are rearrangements $T:X\to X$ such that $\essinf Tf> \essinf f$ for some $f\in X$.  To see this, call $f\in X$ {\em of type I} if $\cH^n(\{x : f(x)>t\})=\infty$ for $t \ge\essinf f$ and {\em of type II} otherwise, i.e., if there is a $t_0 \ge\essinf f$ such that $\cH^n(\{x : f(x)>t\})<\infty$ for $t > t_0$.  Then define $Tf = f + 1$ if $f$ is of type I and $Tf = f$ if $f$ is of type II.  Clearly, $T: X\to X$ is equimeasurable. If $f\le g$, then either $f$ and $g$ are of the same type, or $f$ is of type II and $g$ is of type I.  It follows that $Tf \le Tg$ and hence that $T$ is a rearrangement.  If $f_0(x)=\|x\|$ for $x\in \R^n$, then $f_0$ is of type I, so $\essinf Tf_0=\essinf(f_0+1)=1>0=\essinf f_0$.}
\qed
\end{ex}

\begin{lem}\label{fromSetToFct}
Let $X={\mathcal{M}}(\R^n)$, ${\mathcal{M}}_+(\R^n)$, ${\mathcal{S}}(\R^n)$, or $\cV(\R^n)$, and let $T:X\to X$ be equimeasurable.

\noindent{\rm{(i)}} The induced map $\di_T:\cL^n\to \cL^n$ given by
\begin{equation}\label{eqIndic}
\di_T A=\{x: T1_A(x)=1\}
\end{equation}
for $A\in\cL^n$ is well defined and measure preserving.

\noindent{\rm{(ii)}} If $X={\mathcal{M}}_+(\R^n)$, ${\mathcal{S}}(\R^n)$, or $\cV(\R^n)$, then $T$ essentially maps characteristic functions of sets in $\cL^n$ to characteristic functions of sets in $\cL^n$, in the sense that for each $A\in \cL^n$,
\begin{equation}\label{eqoct72}
T1_A=1_{\di_TA},
\end{equation}
essentially.
\end{lem}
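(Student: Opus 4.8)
The plan is to reduce the whole lemma to one application of the equimeasurability identity \eqref{equi} to the single function $1_A$. First I would record that $1_A\in X$ for every $A\in\cL^n$ and each of the four admissible choices of $X$: indeed $1_A$ is measurable and nonnegative, $\{x:1_A(x)>t\}$ is empty for $t\ge 1$ and equals $A$ for $0\le t<1$, so $\cH^n(\{x:1_A(x)>t\})\le\cH^n(A)<\infty$ for $t>0$. Hence $T1_A$ is a well-defined element of $X\subset{\mathcal{M}}(\R^n)$, so it is $\cH^n$-measurable and $\di_TA=\{x:T1_A(x)=1\}$ is an $\cH^n$-measurable set. The computation I would then rely on is that, by \eqref{equi} with $f=1_A$, the set $\{x:T1_A(x)>t\}$ has $\cH^n$-measure $0$ for $t\ge 1$ and $\cH^n(A)$ for $0\le t<1$.

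For part (i): the case $t=1$ says $T1_A\le 1$ $\cH^n$-a.e.\ (the set $\{T1_A>1\}$ is $\cH^n$-null). Choosing a sequence $t_k\uparrow 1$ with $0<t_k<1$, the sets $\{T1_A>t_k\}$ decrease to $\{T1_A\ge 1\}$ and each has finite measure $\cH^n(A)$, so continuity of $\cH^n$ from above gives $\cH^n(\{T1_A\ge 1\})=\cH^n(A)$. Removing the null set $\{T1_A>1\}$ yields $\cH^n(\di_TA)=\cH^n(\{T1_A=1\})=\cH^n(A)<\infty$, so $\di_TA\in\cL^n$ and $\di_T$ is measure preserving.

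For part (ii) I need in addition that $T1_A\ge 0$ $\cH^n$-a.e. When $X={\mathcal{M}}_+(\R^n)$ or $\cV(\R^n)$ this is automatic since those functions are nonnegative; when $X={\mathcal{S}}(\R^n)$ it follows from Lemma~\ref{may8lem}(i), which gives $\essinf T1_A=\essinf 1_A=0$. Now apply \eqref{equi} at $t=0$ to get $\cH^n(\{T1_A>0\})=\cH^n(A)$, which equals $\cH^n(\{T1_A=1\})$ by part (i). Since $\{T1_A=1\}\subset\{T1_A>0\}$ and the two sets have the same finite measure, their difference $\{0<T1_A<1\}$ is $\cH^n$-null; combined with $0\le T1_A\le 1$ a.e.\ this forces $T1_A\in\{0,1\}$ a.e., i.e.\ $T1_A=1_{\{T1_A=1\}}=1_{\di_TA}$ essentially, which is the asserted formula \eqref{eqoct72}.

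There is no genuinely hard step; the two places that need care are the continuity-from-above argument, where one must keep the open super-level set $\{T1_A>1\}$ separate from the closed one $\{T1_A\ge 1\}$ (the finiteness of $\cH^n(A)$ is exactly what licenses this), and, in part (ii), the appeal to Lemma~\ref{may8lem}(i) to control the sign of $T1_A$ in the symmetrizable case. This sign issue is also what makes the restriction in (ii) necessary: for $X={\mathcal{M}}(\R^n)$ equimeasurability alone does not pin down $\essinf T1_A$, and Example~\ref{may1ex2} shows $T1_A$ need not even lie in ${\mathcal{S}}(\R^n)$, let alone be a characteristic function.
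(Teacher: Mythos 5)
Your proof is correct and follows essentially the same route as the paper's: compute the super-level set measures of $T1_A$ from \eqref{equi}, deduce via continuity from above that $T1_A\le 1$ a.e.\ with $\cH^n(\{T1_A=1\})=\cH^n(A)$, and then for (ii) invoke Lemma~\ref{may8lem}(i) in the ${\mathcal{S}}(\R^n)$ case to control $\essinf T1_A$. The only difference is cosmetic: the paper states the calculation for $\alpha 1_A$ with general $\alpha>0$ (establishing (\ref{eqequim1A}) and (\ref{april301}) for reuse in Lemma~\ref{lemoct6}), whereas you specialize to $\alpha=1$, which is all the present lemma needs.
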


\begin{proof} (i) If $\alpha>0$ and $A\in \cL^n$, the equimeasurability of $T$ yields
\begin{equation}\label{eqequim1A}
\cH^n(\{x: T(\alpha 1_A)(x)> t\})=\cH^n(\{x: (\alpha 1_A)(x)> t\})=\left\{\begin{array}{ll}
0,&t\ge \alpha,\\
\cH^n(A),&0\le t<\alpha,\\
\infty,&t< 0.
\end{array}  \right.
\end{equation}
Hence $T(\alpha 1_A)(x)\in (-\infty,0]\cup\{\alpha\}$ for $\cH^n$-almost all $x\in \R^n$, and the measurable set
\begin{equation}\label{april301}
A_\alpha=\{x: T(\alpha 1_A)(x)=\alpha\}
\end{equation}
satisfies $\cH^n(A_\alpha)=\cH^n(A)<\infty$. This shows that $A_\alpha\in \cL^n$. Applying this with $\alpha=1$ and setting $\di_T A=A_1$, we obtain \eqref{eqIndic} and the measure-preserving property of $\di_T$.

(ii) It follows from \eqref{eqIndic} that
$$T1_A(x)=1\Leftrightarrow x\in \di_TA\Leftrightarrow 1_{\di_TA}(x)=1.$$
It now suffices to observe that if $X={\mathcal{M}}_+(\R^n)$, ${\mathcal{S}}(\R^n)$, or $\cV(\R^n)$, then $T1_A(x)\in \{0,1\}$ for $\cH^n$-almost all $x\in \R^n$, where for $X={\mathcal{S}}(\R^n)$ we used Lemma~\ref{may8lem}(i).
\end{proof}

\begin{ex}\label{may1ex1}
{\rm  If in Lemma~\ref{fromSetToFct}(i) we have $X={\mathcal{M}}(\R^n)$ and extend the definition (\ref{eqIndic}) to a map $\di_T:\cM^n\to \cM^n$, then it need not be measure preserving.  To see this, let
$g_2:\R^n\to \R$ be defined by
\begin{equation}\label{g1M}
g_2(x)=\left\{\begin{array}{ll}
\frac{\|x\|-1}{\|x\|},&x\neq o,\\
0,&x=o,
\end{array}  \right.
\end{equation}
let $A\in \cM^n$ be such that $\cH^n(A)=\infty$, and let
$$T1_A(x)=\begin{cases}
g_2(x),& {\text{if $x\in A$,}}\\
0,& {\text{if $x\not\in A$.}}
\end{cases}$$
Note that
$$
\cH^n(\{x: T1_A(x)> t\})=\cH^n(\{x: 1_A(x)> t\})=\left\{\begin{array}{ll}
0,&t\ge 1,\\
\infty,&t< 1,
\end{array}  \right.
$$
so (\ref{eqequim1A}) holds with $\alpha=1$. Extend the definition of $T$ to an equimeasurable map from ${\mathcal{M}}(\R^n)$ to ${\mathcal{M}}(\R^n)$ arbitrarily.   Since
$$\di_TA=\{x:T1_A(x)=1\}=\{x\in A:g_2(x)=1\}=\emptyset,$$
$\di_T$ is not measure preserving. \qed}
\end{ex}

The equimeasurable map $T_0$ from Example~\ref{may1ex2} shows that Lemma~\ref{fromSetToFct}(ii) does not hold when $X={\mathcal{M}}(\R^n)$.

\begin{lem}\label{lemoct6}
Let $X={\mathcal{S}}(\R^n)$ or $\cV(\R^n)$ and let $T:X\to X$ be a rearrangement.  For $X={\mathcal{S}}(\R^n)$, $A\in \cL^n$, and $\alpha,\beta\in \R$ with $\alpha\ge 0$, we have
\begin{equation}\label{eqoct61}
T(\alpha 1_A+\beta)=\alpha\, T1_A+\beta,
\end{equation}
essentially.  When $X=\cV(\R^n)$, \eqref{eqoct61} holds, essentially, if $\beta=0$.
\end{lem}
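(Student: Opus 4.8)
The plan is to reduce everything to identifying, for $\gamma\ge 0$ and $c\in\R$, the single super-level set on which $T(\gamma 1_A+c)$ attains its maximal value. First dispose of the case $\alpha=0$, which arises only for $X={\mathcal S}(\R^n)$: there $\alpha 1_A+\beta$ is the constant function $\beta$, and Lemma~\ref{may8lem}(iii) gives $T\beta=\beta$ essentially, which is the asserted identity since the right-hand side of \eqref{eqoct61} is then $\beta$. So assume $\alpha>0$, write $f_{\gamma,c}=\gamma 1_A+c$, and record that for $\gamma\ge 0$ one has $f_{\gamma,c}\in X$ (with $c=0$ forced when $X=\cV(\R^n)$), $\essinf f_{\gamma,c}=c$, and $\cH^n(\{f_{\gamma,c}>t\})$ equal to $\infty$ for $t<c$, to $\cH^n(A)$ for $c\le t<\gamma+c$, and to $0$ for $t\ge\gamma+c$.

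The first step is to show that $Tf_{\gamma,c}$ has the same two-valued shape. By equimeasurability the super-level set measures of $Tf_{\gamma,c}$ are exactly those just listed, while $\essinf Tf_{\gamma,c}=c$ by Lemma~\ref{may8lem}(i) (this is automatic when $X=\cV(\R^n)$). A routine level-set argument---using $\cH^n(A)<\infty$ to see that $\{s<Tf_{\gamma,c}\le t\}$ is $\cH^n$-null whenever $c<s<t<\gamma+c$, together with $\essinf Tf_{\gamma,c}=c$ and $\cH^n(\{Tf_{\gamma,c}>\gamma+c\})=0$---shows that, up to null sets, $\R^n$ is the disjoint union of $\{Tf_{\gamma,c}=c\}$ and the set $B_{\gamma,c}:=\{Tf_{\gamma,c}=\gamma+c\}$, and that $\cH^n(B_{\gamma,c})=\cH^n(A)$. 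Hence $Tf_{\gamma,c}=\gamma 1_{B_{\gamma,c}}+c$ essentially. Taking $(\gamma,c)=(1,0)$ and invoking Lemma~\ref{fromSetToFct}(ii) identifies $B_{1,0}=\di_T A$ essentially.

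It remains to prove that $B_{\gamma,c}=\di_T A$ essentially for all $\gamma>0$ and $c\in\R$; granting this, $Tf=Tf_{\alpha,\beta}=\alpha 1_{B_{\alpha,\beta}}+\beta=\alpha 1_{\di_T A}+\beta=\alpha T1_A+\beta$ essentially, which is \eqref{eqoct61} (and its $\beta=0$ version when $X=\cV(\R^n)$). The engine is the following transfer observation: if $\gamma,\gamma'>0$, $c\le c'$, $\gamma+c\le\gamma'+c'$ (so that $f_{\gamma,c}\le f_{\gamma',c'}$ pointwise), and in addition $\gamma+c>c'$, then monotonicity yields $\gamma 1_{B_{\gamma,c}}+c\le\gamma' 1_{B_{\gamma',c'}}+c'$ essentially; a point in $B_{\gamma,c}\setminus B_{\gamma',c'}$ would then force $\gamma+c\le c'$, so $B_{\gamma,c}\setminus B_{\gamma',c'}$ is null, and since both sets have measure $\cH^n(A)$ we conclude $B_{\gamma,c}=B_{\gamma',c'}$ essentially.

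Two applications conclude the argument. Comparing $f_{\gamma,c}$ with $f_{\gamma',c}$ for $\gamma'\ge\gamma>0$---where the side condition $\gamma+c>c$ holds because $\gamma>0$---shows that $B_{\gamma,c}$ is, up to null sets, independent of $\gamma$; write $B_c$ for this common set, so that $B_0=\di_T A$. For $c>0$ one applies the transfer observation to $f_{\gamma,0}\le f_{\gamma,c}$ with any $\gamma>c$, the side condition being $\gamma>c$; for $c<0$ one applies it to $f_{\gamma,c}\le f_{\gamma+c,0}$ with $\gamma=2|c|$, so that the larger function is $f_{|c|,0}$ with $|c|>0$, which both keeps the parameter positive and supplies the strict inequality $\gamma+c>0$ required. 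In either case $B_c=B_0=\di_T A$ essentially. I expect the main difficulty to be the handling of the additive constant: a direct monotonicity comparison between $\alpha 1_A$ and $\alpha 1_A+\beta$ does not by itself identify the relevant super-level set, and it is the combination of the transfer observation with the $\gamma$-independence of $B_{\gamma,c}$ that circumvents this.
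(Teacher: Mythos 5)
Your proof is correct, and the underlying mechanism is the same as the paper's: establish that $T(\gamma 1_A+c)$ is two-valued, equal to $\gamma 1_{B_{\gamma,c}}+c$ for a set $B_{\gamma,c}$ of measure $\cH^n(A)$, then use monotonicity to force an inclusion between the top-level sets of two such functions and conclude by equality of measure. Where you differ is in the organization. You isolate the inclusion step as a reusable \emph{transfer observation} with explicit side condition $\gamma+c>c'$ (which, as you note, is exactly what obstructs a naive comparison of $\alpha 1_A$ with $\alpha 1_A+\beta$), establish $\gamma$-independence of $B_{\gamma,c}$ first, and then handle $c>0$ and $c<0$ by a single transfer each. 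This genuinely streamlines the trickiest part: the paper's $\beta<0$ argument builds a three-function chain $\alpha 1_A+\beta\le(\gamma-\beta)1_A+\beta\le\gamma 1_A$ with $\gamma=\max\{\alpha+\beta,1\}$ and tracks an auxiliary level set $C$ through two inequalities, whereas your choice $\gamma=2|c|$ gives $f_{\gamma,c}\le f_{|c|,0}$ directly (with the right endpoint $\gamma+c=|c|>0$ supplying the strict side condition) and appeals to $\gamma$-independence to identify $B_{|c|,0}=B_0=\di_T A$, avoiding the intermediate set altogether. The paper's route is marginally more self-contained in the $\beta=0$ step (it derives $A_{\alpha'}=A_\alpha$ from the two-valued structure of $T(\alpha 1_A)$ rather than via a separate shape lemma), but your modular version is easier to check and reuse.
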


\begin{proof}
The case when $\alpha=0$ follows from Lemma~\ref{may8lem}(iii), so henceforth we assume that $\alpha>0$.

We shall ignore sets of $\cH^n$-measure zero for the remainder of the proof.  We first assume that $\beta=0$.  If $0<\alpha'\le\alpha$, then $\alpha'1_A\le \alpha 1_A$, so $T(\alpha'1_A)\le T(\alpha 1_A)$. Now $T(\alpha' 1_A)(x)=\alpha'$ if and only if $x\in A_{\alpha'}$, where $A_{\alpha'}$ is defined by (\ref{april301}) with $\alpha$ replaced by $\alpha'$, so $T(\alpha 1_A)(x)\ge \alpha'$ for $x\in A_{\alpha'}$.  From the proof of Lemma~\ref{fromSetToFct}(i), we see that $T(\alpha 1_A)(x)=\alpha$ for $x\in  A_{\alpha'}$ and hence $A_{\alpha'}\subset A_{\alpha}$. By \eqref{eqequim1A}, we have $\cH^n(A_{\alpha'})=\cH^n(A)=\cH^n(A_\alpha)$, so $A_{\alpha'}= A_\alpha$.  Consequently, for each $\alpha>0$ we have $A_{\alpha}=A_1$.  Now
$$T(\alpha 1_A)(x)=\alpha\Leftrightarrow x\in A_{\alpha}\Leftrightarrow x\in A_{1}~~\quad{\text{and}}~~\quad
\alpha 1_{\di_T A}(x)=\alpha\Leftrightarrow x\in A_1,$$
so, using \eqref{eqoct72}, we obtain
\begin{equation}\label{eqequimalpha1A}
T(\alpha 1_A)(x)=\alpha 1_{\di_T A}(x)=\alpha\, T1_A,
\end{equation}
as required.  This proves \eqref{eqoct61} when $\beta=0$ and the second statement in the lemma.

Suppose that $\beta\neq 0$ and for convenience let $h=\alpha 1_A+\beta$.  Then $h\in \{\beta,\alpha+\beta\}$. Arguing as in the proof of Lemma~\ref{fromSetToFct}(i), we see that $Th\in(-\infty,\beta]\cup\{\alpha+\beta\}$, and hence, by Lemma~\ref{may8lem}(i), $Th\in \{\beta,\alpha+\beta\}$. It follows that $Th=\alpha1_B+\beta$ for some $\cH^n$-measurable set $B$. For $t\in(\beta,\alpha+\beta)$,
$$\mathcal{H}^n(B)=\mathcal{H}^n\{x: Th(x)>t\}=\mathcal{H}^n\{x: h(x)>t\}=\mathcal{H}^n(A).$$
In view of \eqref{eqoct72}, it will suffice to show that $B=\di_TA$.

Assume that $\beta>0$.  Then $h\ge (\alpha+\beta)1_A$, so using the monotonicity of $T$, and \eqref{eqequimalpha1A} with $\alpha$ replaced by $\alpha+\beta$, we get
$$Th=\alpha 1_B+\beta\ge T\left((\alpha+\beta)1_A\right)=(\alpha+\beta)T1_A=(\alpha+\beta)1_{\di_TA}.$$
Since $\alpha 1_B(x)+\beta=\alpha+\beta\Leftrightarrow x\in B$ and $(\alpha+\beta)1_{\di_TA}(x)=\alpha+\beta\Leftrightarrow x\in \di_TA$, we must have $B=\di_TA$.

Finally, suppose that $\beta<0$.  Let $\gamma=\max\{\alpha+\beta, 1\}$.  Then
$$h=\alpha 1_A+\beta\le (\gamma-\beta)1_A+\beta\le \gamma 1_A,$$
so arguing as above, we find that
\begin{equation}\label{eqoct81}
\alpha 1_B+\beta=Th\le T\left((\gamma-\beta)1_A+\beta\right)
\le (\gamma-\beta)1_C+\beta\le \gamma T1_A=\gamma 1_{\di_TA},
\end{equation}
for some $C$ with $\cH^n(C)=\cH^n(B)=\cH^n(A)$.  Since $(\gamma-\beta)1_C(x)+\beta=\gamma \Leftrightarrow x\in C$ and $\gamma 1_{\di_TA}(x)=\gamma \Leftrightarrow x\in \di_TA$, the right-hand inequality in \eqref{eqoct81} yields $C=\di_TA$.  If $\alpha+\beta \ge 1$, then $\gamma=\alpha+\beta$ and the left-hand inequality in \eqref{eqoct81} similarly yields $B=C$.  If $\alpha+\beta<1$, then $\gamma=1$ and the left-hand inequality in \eqref{eqoct81} becomes $\alpha 1_B\le (1-\beta)1_C$.  Now $\alpha 1_B(x)=\alpha>0 \Leftrightarrow x\in B$ and $(1-\beta)1_C(x)=0 \Leftrightarrow x\not\in C$, so $\cH^n(B\setminus C)=0$.  From this and $\cH^n(B)=\cH^n(C)$, we conclude that $B=C$.  Therefore $B=\di_TA$, as required.
\end{proof}

\begin{thm}\label{lemapril30}
Let $X={\mathcal{M}}(\R^n)$, ${\mathcal{M}}_+(\R^n)$, ${\mathcal{S}}(\R^n)$, or $\cV(\R^n)$ and let $T:X\to X$ be a rearrangement.

\noindent{\rm{(i)}}  The map  $\di_T:\cL^n\to \cL^n$ defined by \eqref{eqIndic} is monotonic.

\noindent{\rm{(ii)}} If $X={\mathcal{S}}(\R^n)$ or $\cV(\R^n)$ and $f\in X$, then
\begin{equation}\label{eqnov11}
\{x: Tf(x)\ge t\}=\di_T\{x: f(x)\ge t\},
\end{equation}
essentially, for $t>\essinf f$.  Moreover, $T$ is essentially determined by $\di_T$, since
\begin{equation}\label{eqoct62}
Tf(x)=\max\left\{\sup\{t\in \Q,\, t>\essinf f: x\in \di_T \{z: f(z)\ge t\}\},\essinf f\right\},
\end{equation}
essentially.
\end{thm}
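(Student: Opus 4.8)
The proof of Theorem~\ref{lemapril30} splits naturally into establishing (i), then the set-level identity \eqref{eqnov11} in (ii), and finally deriving the reconstruction formula \eqref{eqoct62} from \eqref{eqnov11}.

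For part (i), I would argue as follows. Suppose $A, B \in \cL^n$ with $A \subset B$. Since $\alpha 1_A \le 1_A + \beta$ is not immediately helpful, instead I compare $1_A$ and $1_B$ directly: $1_A \le 1_B$ essentially, so monotonicity of $T$ gives $T1_A \le T1_B$ essentially. By Lemma~\ref{fromSetToFct}(i), $T1_A(x) \in (-\infty,0]\cup\{1\}$ and $T1_B(x) \in (-\infty,0]\cup\{1\}$ for $\cH^n$-almost all $x$. If $x \in \di_T A$, i.e.\ $T1_A(x) = 1$, then $T1_B(x) \ge 1$ forces $T1_B(x) = 1$, so $x \in \di_T B$. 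Hence $\di_T A \subset \di_T B$ essentially, which by the convention of working up to null sets is what monotonicity of $\di_T$ means. (One must be slightly careful: since $X$ may be $\mathcal{M}(\R^n)$ or $\mathcal{M}_+(\R^n)$, I should invoke Lemma~\ref{may8lem}(ii) to know $T$ restricts to a rearrangement on $\mathcal{S}(\R^n)$, so that $T1_A$ indeed takes values in $(-\infty,0]\cup\{1\}$; this is already implicit in Lemma~\ref{fromSetToFct}(i) being applicable.)

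For \eqref{eqnov11} in part (ii), fix $f \in X$ (with $X = \mathcal{S}(\R^n)$ or $\cV(\R^n)$) and $t > \essinf f$. Write $A_t = \{x : f(x) \ge t\}$; since $t > \essinf f$ and $f \in \mathcal{S}(\R^n)$, we have $A_t \in \cL^n$. The key tool is Lemma~\ref{lemoct6}: one sandwiches $f$ between functions of the form $\alpha 1_{A_t} + \beta$ from below and (on the relevant super-level set) uses equimeasurability to pin down the measures. Concretely, I would pick $s$ with $\essinf f < s < t$, observe that $(t-s)1_{A_t} + s \le f$ on $A_t$ — actually one wants the clean comparison $g := \min\{f, t\}$ and note $g \ge (t-s)1_{A_t}+s$ pointwise while $g \le f$, so by monotonicity $T((t-s)1_{A_t}+s) \le Tg \le Tf$; by Lemma~\ref{lemoct6}, $T((t-s)1_{A_t}+s) = (t-s)1_{\di_T A_t}+s$, which equals $t$ exactly on $\di_T A_t$. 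Hence $\di_T A_t \subset \{Tf \ge t\}$ essentially. For the reverse inclusion, I compare from above: let $h$ agree with a large constant $c > \sup$-type bound is unavailable for general $f$, so instead use $h = \max\{f, ?\}$ carefully — the right move is to truncate, taking $f_M = \min\{f, M\}$ for large $M$, note $\{f \ge t\} = \{f_M \ge t\}$ when $M > t$, and compare $f_M \le M 1_{A_t} + $ (something matching $f_M$ off $A_t$), combined with equimeasurability $\cH^n(\{Tf \ge t\}) = \cH^n(\{f \ge t\}) = \cH^n(A_t) = \cH^n(\di_T A_t)$, forcing equality of the two super-level sets up to null sets. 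The measure-equality step leverages that $T$ is equimeasurable for strict super-level sets and a standard passage to $\ge$ via countable intersections over rationals.

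Finally, \eqref{eqoct62} follows from \eqref{eqnov11} by a pointwise argument. For $\cH^n$-almost every $x$, we have $Tf(x) \ge t$ (for $t > \essinf f$) if and only if $x \in \di_T\{f \ge t\}$; since $\di_T$ is monotonic the sets $\di_T\{f \ge t\}$ are nested in $t$, and $Tf(x) = \sup\{t > \essinf f : x \in \di_T\{f \ge t\}\}$ whenever this sup exceeds $\essinf f$, and equals $\essinf f$ otherwise (using Lemma~\ref{may8lem} that $\essinf Tf = \essinf f$). Restricting the supremum to rational $t$ is justified by right-continuity/nestedness: if $x \in \di_T\{f \ge t\}$ for some real $t$, then by monotonicity of $\di_T$ also $x \in \di_T\{f \ge t'\}$ for all rational $t' \in (\essinf f, t)$, giving the same supremum. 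Taking the max with $\essinf f$ handles the case where $x$ lies in no such set, yielding the displayed formula essentially. The main obstacle is the reverse inclusion in \eqref{eqnov11}: producing the correct upper comparison function for a general unbounded $f \in \mathcal{S}(\R^n)$ and chaining Lemma~\ref{lemoct6} with equimeasurability so that the measure bookkeeping forces set-equality rather than mere inclusion; once that identity is in hand, the passage to \eqref{eqoct62} is a routine lattice-theoretic computation with super-level sets.
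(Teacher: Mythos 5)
Your argument for part (i) is essentially the paper's, and your derivation of \eqref{eqoct62} from \eqref{eqnov11} has the right shape, though the reason one restricts the supremum to rationals is not ``right-continuity/nestedness'' but rather that each $t$ contributes an exceptional $\cH^n$-null set $N_t$, and only a countable union of such sets stays null (the pointwise identity $Tf(x)=\sup\{t\in\Q:Tf(x)\ge t\}$ then finishes it).

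The genuine gap is in your proof of the key inclusion $\di_T A_t\subset\{Tf\ge t\}$ for \eqref{eqnov11}. You choose $s$ with $\essinf f<s<t$ and claim that $g=\min\{f,t\}$ satisfies $g\ge (t-s)1_{A_t}+s$ pointwise. This is false: off $A_t$ the right side equals $s$, while $g=f$ there and, since $s>\essinf f$, the set $\{f<s\}\setminus A_t$ has positive measure. So the inequality you feed into the monotonicity of $T$ and Lemma~\ref{lemoct6} simply does not hold, and the inclusion is not established. The correct choice is $\beta=\essinf f$, not some $s>\essinf f$: when $\essinf f>-\infty$, one has $f\ge (t-\essinf f)1_{A_t}+\essinf f$ essentially, and Lemma~\ref{lemoct6} with $\alpha=t-\essinf f$, $\beta=\essinf f$ gives $Tf\ge (t-\essinf f)T1_{A_t}+\essinf f$, whence $\di_T A_t\subset\{Tf\ge t\}$; equal $\cH^n$-measures then force essential equality. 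Your remark about truncating by $f_M=\min\{f,M\}$ for large $M$ also misses the point: the problematic case is $\essinf f=-\infty$, which requires truncating from \emph{below} (set $f_s=\max\{f,s\}$, note $\{f\ge t\}=\{f_s\ge t\}$ and $\essinf f_s=s>-\infty$, apply the finite-infimum case, and then compare $Tf_s$ with $Tf$ using monotonicity and equimeasurability). Truncating from above does nothing to repair the infimum.
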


\begin{proof} (i) If $A\subset B$, then $1_A\le 1_B$ and hence $1=T1_A(x)\le T1_B(x)$ for $\cH^n$-almost all $x\in \di_TA=\{x: T1_A(x)=1\}$. Since $T1_B(x)\in (-\infty,0]\cup\{1\}$ for $\cH^n$-almost all $x\in \R^n$, it is clear that
$\di_TA\subset \{x: T1_B(x)=1\}=\di_TB$, essentially.  Therefore $\di_T$ is monotonic.

(ii) Since $\cV(\R^n)\subset{\mathcal{S}}(\R^n)$, it suffices to consider the case when $X={\mathcal{S}}(\R^n)$.  Let $f\in {\mathcal{S}}(\R^n)$ and let $t>\essinf f$.  If $(t_m)$ is an increasing sequence with $\essinf f<t_m<t$ converging to $t$, the fact that $\cH^n(\{x: f(x)>t_1\})<\infty$ implies that
$$\cH^n(\{x: f(x) \ge t\})=\cH^n\left(\cap_{m=1}^\infty \{x: f(x)>t_m\}\right)=\lim_{m\to{\infty}}
\cH^n\left(\{x: f(x)>t_m\}\right).$$
The same statement holds when $f$ is replaced by $Tf$. The equimeasurability of $T$ then yields
\begin{align}\label{eq:geq}
\cH^n(\{x: f(x) \ge t\})=\cH^n(\{x: Tf(x) \ge t\}).
\end{align}

Assume that $\essinf f>-\infty$.  Let $C=\{x: f(x)\ge t\}$.  It is easy to check that
$$f\ge (t-\essinf f)1_C+\essinf f.$$
By Lemma~\ref{lemoct6} with $\alpha=t-\essinf f$ and $\beta=\essinf f$, and the monotonicity of $T$, we obtain
$$Tf\ge T\left((t-\essinf f)1_C+\essinf f\right)=(t-\essinf f)T1_C+\essinf f,$$
essentially.  This inequality and \eqref{eqIndic} give
$$\di_T\{x:f(x)\ge t\}=\di_TC=\{x:T1_C(x)=1\}\subset \{x: Tf(x)\ge t\},$$
essentially.  The left- and right-hand sides are of equal $\cH^n$-measure by \eqref{eq:geq} and the measure-preserving property of $\di_T$, and are therefore essentially equal.  Hence \eqref{eqnov11} holds when $\essinf f>-\infty$.

Now suppose that $\essinf f=-\infty$.  Let $s<t$ and define $f_s(x)=\max\{f(x),s\}$ for $x\in\R^n$.  Then $\essinf f_s = s$ and
\begin{equation}\label{eqoct71}
\{x:f(x)\ge t\}=\{x:f_s(x)\ge t\}.
\end{equation}
Since $f\le f_s$, the monotonicity of $T$ implies that $\{x:Tf_s(x)\ge t\}$ is essentially contained in $\{x:Tf(x)\ge t\}$. This, \eqref{eqoct71}, and the equimeasurability of $T$ yield $\{x:Tf(x)\ge t\}=\{x:Tf_s(x)\ge t\}$, essentially.  By \eqref{eqnov11} with $f$ replaced by $f_s$, and \eqref{eqoct71} again, we obtain
$$\{x: Tf(x)\ge t\}=\{x: Tf_s(x)\ge t\}=\di_T\{x: f_s(x)\ge t\}=\di_T\{x: f(x)\ge t\},$$
proving that \eqref{eqnov11} holds generally.

Consequently, for each $t>\essinf f$, the symmetric difference
\begin{align}\label{eq:diIsGood}
N_t=\{x: Tf(x)\ge t\}\,\triangle\,\di_T\{x: f(x)\ge t\}
\end{align}
satisfies $\cH^n(N_t)=0$.  According to Lemma~\ref{may8lem}(i), there is a set $N$ such that $\cH^n(N)=0$ and $Tf(x)\ge \essinf f$ when $x\not\in N$.  If $g:\R^n\to \R$, then $g(x)=\sup\{t\in \Q: g(x)\ge t\}$ for $x\in \R^n$. Using this with $g=Tf$ and taking \eqref{eq:diIsGood} into account, we obtain \eqref{eqoct62} for $x\in \R^n\setminus (\cup\{N_t: t\in \Q,\, t>\essinf f\}\cup N)$ and hence for $\cH^n$-almost all $x\in\R^n$.
\end{proof}

Under the assumptions in Lemma~\ref{lemapril30}(ii), it is also true that
\begin{equation}\label{eqnov12}
\{x: Tf(x)> t\}=\di_T\{x: f(x)> t\},
\end{equation}
essentially, for $t>\essinf f$. Indeed, using \eqref{eqnov11}, we have
\begin{eqnarray*}
\{x: Tf(x)> t\}&=&\cup_{n\in \N}\{x: Tf(x)\ge t+1/n\}=
\cup_{n\in \N}\di_T\{x: f(x)\ge t+1/n\}\\
&=&\di_T\cup_{n\in \N}\{x: f(x)\ge t+1/n\}=\di_T\{x: f(x)>t\},
\end{eqnarray*}
essentially, for $t>\essinf f$, where the third equality follows easily from the fact that $\di_T$ is measure preserving and monotonic.  It follows that
\begin{equation}\label{eqnov13}
Tf(x)=\max\left\{\sup\{t\in \Q,\, t>\essinf f: x\in \di_T \{z: f(z)> t\}\},\essinf f\right\},
\end{equation}
essentially, an alternative formula to \eqref{eqoct62}.

We stress that the exceptional set in Theorem~\ref{lemapril30}(ii) cannot be avoided and may depend on $f$. For example, define $T:\cV(\R^n)\rightarrow \cV(\R^n)$ by
$$
Tf(x)=\begin{cases}
0,&\text{if } f(x)<1 ~\text{and}~ x\in \Q^n,\\
f(x),&\text{otherwise}.
\end{cases}
$$
Then $T$ is a rearrangement, but it does not coincide with the identity, although $\di_T=\di_\mathrm{Id}$.  Note also that the supremum over $\Q$ in \eqref{eqoct62} cannot be replaced by the supremum over $\R$, and it is consistent with ZFC that it cannot be replaced by the essential supremum over $\R$; see the remarks after Example~\ref{exoct212}.

With obvious modifications, the following result also applies to rearrangements $T:{\cV}(\R^n)\to {\cV}(\R^n)$.  The equality \eqref{eqMe} also appears in \cite[Proposition~3(d)]{VSW}, where the notation and framework is substantially different, as we explain in the Appendix.  Note that \cite[Proposition~3(d)]{VSW} assumes (in our notation) that $\varphi$ is left-continuous and increasing, but this is not valid in our context, as we show in Example~\ref{exnov14}.

\begin{thm}\label{coroct241}
Let $T:{\mathcal{S}}(\R^n)\to {\mathcal{S}}(\R^n)$ be a rearrangement and let $f\in {\mathcal{S}}(\R^n)$.  If $\varphi:\R\to\R$ is right-continuous and increasing (i.e., non-decreasing), then $\varphi\circ f\in {\mathcal{S}}(\R^n)$ and
\begin{align}\label{eqMe}
\varphi(Tf)=T(\varphi\circ f),
\end{align}
essentially.  It follows that $T(\alpha f+\beta)=\alpha Tf + \beta$, essentially, for $\alpha, \beta\in \R$ with $\alpha\ge 0$.
\end{thm}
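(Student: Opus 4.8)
The plan is to pass from the structural formula \eqref{eqnov13} (equivalently \eqref{eqoct62}) to the functional equation $\varphi(Tf) = T(\varphi\circ f)$ by tracking super-level sets and using that $\di_T$ is monotonic and measure preserving. First I would record that $\varphi\circ f\in{\mathcal{S}}(\R^n)$: since $\varphi$ is increasing, $\{x:\varphi(f(x))>s\}$ is a super-level set of $f$ (of the form $\{f>t\}$ or $\{f\ge t\}$ for $t=\inf\varphi^{-1}((s,\infty))$), which has finite measure for $s>\essinf(\varphi\circ f)$; here one uses $\essinf(\varphi\circ f)=\varphi(\essinf f)$ when $\varphi$ is right-continuous (the right-continuity guarantees the infimum is attained in the sense needed, so no mass escapes below $\varphi(\essinf f)$). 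The key identity to establish is the set equality, essentially and for all $s>\essinf(\varphi\circ f)$,
\begin{equation}\label{eqprop1}
\{x:\varphi(Tf(x))>s\}=\{x:T(\varphi\circ f)(x)>s\}.
\end{equation}
Since two functions in ${\mathcal{S}}(\R^n)$ with essentially equal super-level sets for all rational $s$ above their common essential infimum are essentially equal, \eqref{eqprop1} for rational $s$ suffices.

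For the left-hand side of \eqref{eqprop1}: because $\varphi$ is increasing and right-continuous, $\varphi(u)>s$ if and only if $u>t_s$ or ($u=t_s$ and $\varphi(t_s)>s$), where $t_s=\sup\{t:\varphi(t)\le s\}$; right-continuity of $\varphi$ means $\varphi(t_s)\le s$, so in fact $\varphi(u)>s\iff u>t_s$. Hence $\{x:\varphi(Tf(x))>s\}=\{x:Tf(x)>t_s\}$, which by \eqref{eqnov12} equals $\di_T\{x:f(x)>t_s\}$, essentially (assuming $t_s>\essinf f$; the boundary cases where $t_s\le\essinf f$ are handled separately using Lemma~\ref{may8lem}(iii) and the observation that then $\{\varphi\circ f>s\}$ is either all of $\R^n$ up to measure zero or has the same description). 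For the right-hand side: again by \eqref{eqnov12} applied to $\varphi\circ f\in{\mathcal{S}}(\R^n)$, $\{x:T(\varphi\circ f)(x)>s\}=\di_T\{x:\varphi(f(x))>s\}=\di_T\{x:f(x)>t_s\}$, essentially, where the last step is exactly the computation $\{\varphi\circ f>s\}=\{f>t_s\}$ from the previous line. Both sides of \eqref{eqprop1} thus equal $\di_T\{x:f(x)>t_s\}$ essentially, giving \eqref{eqprop1}, and hence \eqref{eqMe}.

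The main obstacle I anticipate is the careful bookkeeping at the bottom of the range: I must verify that $\essinf(Tf)=\essinf f$ and $\essinf(\varphi\circ Tf)=\essinf(\varphi\circ f)=\varphi(\essinf f)$ all match up (using Lemma~\ref{may8lem}(i), which needs $Tf\in{\mathcal{S}}(\R^n)$, itself from Lemma~\ref{may8lem}(ii)), so that the index sets $\{s\in\Q:s>\essinf(\cdot)\}$ over which I compare super-level sets coincide; and I must check the degenerate cases where the threshold $t_s$ falls at or below $\essinf f$, where $\{f>t_s\}$ may be cofinite or where $\varphi$ jumps exactly at $\essinf f$. Once the set identity \eqref{eqprop1} is in hand for all rational $s$ exceeding the common essential infimum, the functions $\varphi(Tf)$ and $T(\varphi\circ f)$ agree essentially. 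Finally, the last sentence follows by taking $\varphi(t)=\alpha t+\beta$ with $\alpha\ge 0$, which is continuous (in particular right-continuous) and increasing, so \eqref{eqMe} gives $\alpha\,Tf+\beta=\varphi(Tf)=T(\varphi\circ f)=T(\alpha f+\beta)$, essentially.
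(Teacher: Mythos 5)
Your overall strategy---pass to super-level sets, use \eqref{eqnov11}/\eqref{eqnov12}, then recover equality of functions from essential equality of super-level sets---is the route the paper takes, but there is a concrete error in your key reduction. You claim that ``right-continuity of $\varphi$ means $\varphi(t_s)\le s$,'' where $t_s=\sup\{t:\varphi(t)\le s\}$. The implication runs the other way: since $\varphi(t)>s$ for all $t>t_s$, right-continuity forces $\varphi(t_s)=\lim_{t\downarrow t_s}\varphi(t)\ge s$, and both $\varphi(t_s)\le s$ and $\varphi(t_s)>s$ remain possible. It is \emph{left}-continuity that would give $\varphi(t_s)\le s$. For example, with $\varphi=1_{[0,\infty)}$ (right-continuous and increasing) and $s=\tfrac12$, one has $t_s=0$ and $\varphi(t_s)=1>s$, so $\{u:\varphi(u)>s\}=[0,\infty)$ is closed, not open. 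Hence the identity $\{x:\varphi(Tf(x))>s\}=\{x:Tf(x)>t_s\}$ you rely on is false for such $\varphi$; the set $\{u:\varphi(u)>s\}$ can equal either $(t_s,\infty)$ or $[t_s,\infty)$, and you must invoke \eqref{eqnov12} in the first case but \eqref{eqnov11} in the second. The paper's proof does exactly this, working with $s_t=\inf\{s:\varphi(s)\ge t\}$ and splitting on whether $\varphi(s_t)\ge t$; you need the analogous case split.

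A second issue is that several steps you flag as ``bookkeeping'' are not actually discharged. Your formula $\essinf(\varphi\circ f)=\varphi(\essinf f)$ is only valid when $\essinf f>-\infty$; when $\essinf f=-\infty$ the correct value is $\inf\varphi$ (the paper's \eqref{eqnov41}), and this case enters both the proof that $\varphi\circ f\in{\mathcal{S}}(\R^n)$ and the matching of essential infima. You also need to show $\essinf\varphi(Tf)=\essinf T(\varphi\circ f)$ before the super-level-set comparison yields essential equality of the two functions; the paper obtains this from Lemma~\ref{may8lem}(i) together with \eqref{eqnov41} applied to $g=Tf$ and $g=f$ in the two subcases. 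None of this is difficult, but as written the proposal leaves these as acknowledged to-dos rather than resolving them.
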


\begin{proof}
We do not require the right-continuity of $\varphi$ everywhere, but only at $\essinf f$ when $\essinf f>-\infty$.

We first claim that for any $g\in {\mathcal{S}}(\R^n)$ such that $\varphi$ is right-continuous at $\essinf g$ when $\essinf g>-\infty$, we have
\begin{equation}\label{eqnov41}
\essinf \varphi\circ g=\begin{cases}
\varphi(\essinf g),& {\text{if $\essinf g>-\infty$,}}\\
\inf\varphi,& {\text{if $\essinf g=-\infty$}}.
\end{cases}
\end{equation}
Indeed, if $\essinf g=-\infty$, then $\essinf \varphi\circ g\ge \inf \varphi$ is obvious, while
\begin{equation}\label{eqnov242}\cH^n(\{x:\varphi(g(x))\le \varphi(t)\})\ge \cH^n(\{x:g(x)< t\})>0
\end{equation}
for $t\in \R$ and hence $\inf \varphi=\lim_{t\to-\infty}\varphi(t)\ge \essinf \varphi\circ g$.  If $\essinf g>-\infty$, we have $g\ge \essinf g$, essentially, so $\varphi\circ g\ge \varphi(\essinf g)$, essentially, and hence $\essinf \varphi\circ g\ge \varphi(\essinf g)$.  On the other hand, \eqref{eqnov242} holds for $t>\essinf g$, so for such $t$, $\varphi(t)\ge \essinf \varphi\circ g$.  Then $\varphi(\essinf g)\ge \essinf \varphi\circ g$ follows from the right-continuity of $\varphi$ at $\essinf g$.  This proves \eqref{eqnov41}.

Let $f\in {\mathcal{S}}(\R^n)$.  For $t\in \R$, let $s_t=\inf\{s:\varphi(s)\ge t\}$.  Since $\varphi$ is increasing, we have
\begin{equation}\label{eqnov241}
\{s:\varphi(s)\ge t\}=\begin{cases}
[s_t,\infty),& {\text{if $s_t\in \R$ and $\varphi(s_t)\ge t$,}}\\
(s_t,\infty),& {\text{otherwise}}.
\end{cases}
\end{equation}
As $\varphi$ is also right-continuous at $\essinf f$ when $\essinf f>-\infty$, then under the latter assumption,
\begin{equation}\label{eqlast}
t>\varphi(\essinf f)\Rightarrow s_t>\essinf f.
\end{equation}
Suppose that $\essinf f=-\infty$.  If $t>\inf \varphi$, let $t>t'>\inf\varphi$.  Then $s_{t'}>-\infty$, so \eqref{eqnov241} with $t$ replaced by $t'$ implies that
\begin{equation}\label{eqnov251}
\cH^n(\{x:\varphi(f(x))>t\})\le \cH^n(\{x:\varphi(f(x))\ge t'\})\le\cH^n(\{x:f(x)\ge s_{t'}\})<\infty,
\end{equation}
since this holds trivially when $s_{t'}=\infty$ and in view of $f\in {\mathcal{S}}(\R^n)$ otherwise.  Now suppose that $\essinf f>-\infty$.   If $t>\varphi(\essinf f)$, let $t>t'>\varphi(\essinf f)$.  By \eqref{eqlast} with $t$ replaced by $t'$, we conclude that $s_{t'}>\essinf f$, so \eqref{eqnov251} holds again since $f\in {\mathcal{S}}(\R^n)$.  Since $\essinf \varphi\circ f=\varphi(\essinf f)$, by \eqref{eqnov41} with $g=f$, this proves that $\varphi\circ f\in {\mathcal{S}}(\R^n)$.

We claim that $\essinf \varphi(Tf)=\essinf T(\varphi\circ f)$.  To this end, note that for $g\in {\mathcal{S}}(\R^n)$, we have $\essinf Tg=\essinf g$, by Lemma~\ref{may8lem}(i).  We apply this with $g=f$ and $g=\varphi\circ f$ and \eqref{eqnov41} with $g=Tf$ and $g=f$.   If $\essinf f>-\infty$, we get
$$\essinf \varphi(Tf)=\varphi(\essinf Tf)=\varphi(\essinf f)=\essinf(\varphi\circ f)=\essinf T(\varphi\circ f),$$
while if $\essinf f=-\infty$, then $\essinf Tf=-\infty$ and we obtain
$$\essinf \varphi(Tf)=\inf\varphi=\essinf \varphi\circ f=\essinf T(\varphi\circ f).$$
This proves the claim.

The next step is to prove that
\begin{equation}\label{eqnov243}
\{x:\varphi(Tf(x))\ge t\}=\{x:T(\varphi\circ f)(x)\ge t\},
\end{equation}
essentially, for $t>\essinf \varphi(Tf)=\essinf T(\varphi\circ f)$.  In fact, the latter inequality and \eqref{eqnov41} with $g=Tf$ imply that $s_t>\essinf Tf=\essinf f$, where \eqref{eqlast} was used when $s=\essinf f>-\infty$.  If $\varphi(s_t)\ge t$, we use \eqref{eqnov11} twice to obtain
\begin{align*}
\{x:\varphi(Tf(x))\ge t\}&=\{x:Tf(x)\ge s_t\}=\di_T\{x:f(x)\ge s_t\}
\\&=\di_T\{x:\varphi(f(x))\ge t\}=
\{x:T(\varphi\circ f)(x)\ge t\},
\end{align*}
essentially.  A similar argument, using \eqref{eqnov12} instead of \eqref{eqnov11}, yields \eqref{eqnov243} when $\varphi(s_t)<t$ or $s_t\not\in \R$.

The proof of the first statement in the corollary is concluded by noting that if $g,h\in {\mathcal{S}}(\R^n)$ satisfy $\essinf g=\essinf h$ and $\{x:g(x)\ge t\}=\{x:h(x)\ge t\}$, essentially, for all $t>\essinf g$, then $g=h$, essentially.  Indeed, following the proof of \cite[Lemma~1]{VSW}, we may otherwise assume that there is an $\ee>0$ such that $\cH^n(\{x: h(x)>g(x)+\ee\})>0$.  But
$$\{x: h(x)>g(x)+\ee\}\subset\cup_{n\in\Z,\, n\ee\ge \essinf g}(\{x:h(x)\ge n\ee\}\setminus \{x:g(x)\ge n\ee\}),$$
essentially, and the right-hand side has $\cH^n$-measure zero, a contradiction.

The second statement in the corollary follows immediately from the first on setting $\varphi(t)=\alpha t+\beta$.
\end{proof}

The proof of the previous theorem, as was mentioned at the beginning of it, actually only requires the right-continuity of $\varphi$ at $\essinf f$ when $\essinf f>-\infty$.  The following example shows that this is the weakest possible continuity condition on $\varphi$ for which the theorem holds.

\begin{ex}\label{exnov14}
{\rm  If $\varphi:\R\to \R$ is left-continuous and increasing, it is possible that $f\in {\mathcal{S}}(\R^n)$ but $\varphi\circ f\not\in {\mathcal{S}}(\R^n)$.  Indeed, taking $n=1$ for simplicity, let $\varphi(t)=t$ for $t>0$ and $\varphi(t)=-1$ for $t\le 0$.  Let $f\in {\mathcal{S}}(\R)$ be any function such that $f(x)> 0$ for $x>0$ and $f(x)=0$ for $x\le 0$.  Then $\varphi(f(x))\ge 0$ for $x>0$ and $\varphi(f(x))=-1$ for $x\le 0$, so $\varphi\circ f\not\in {\mathcal{S}}(\R^n)$.  Note that $\varphi$ is continuous everywhere except at $\essinf f=0$, where it is only left-continuous. \qed}
\end{ex}

Equimeasurable maps satisfying \eqref{eqMe} for all right-continuous, increasing $\varphi$ must actually be rearrangements, as we now show.  The first part of the proof uses ideas of Van Schaftingen \cite[Proposition~2.4.1]{VSPhD}.

\begin{lem}\label{lemnov1}
Let $T:{\mathcal{S}}(\R^n)\to {\mathcal{S}}(\R^n)$ be equimeasurable.  Suppose that $\varphi(Tf)=T(\varphi\circ f)$, essentially, whenever $f\in {\mathcal{S}}(\R^n)$ and $\varphi:\R\to\R$ is right-continuous and increasing.  Then $T$ is monotonic and hence a rearrangement.
\end{lem}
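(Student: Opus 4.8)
The plan is to establish monotonicity in two stages: first that the induced set map $\di_T$ is monotonic, and then that $T$ itself is monotonic, by passing to super-level sets. All inclusions and equalities of sets and functions below are modulo $\cH^n$-null sets. For the first stage, which uses Van Schaftingen's idea, fix $A\subset B$ in $\cL^n$. Since $A$ and $B$ have finite measure, the function $h=1_A+1_B$ lies in ${\mathcal{S}}(\R^n)$. Let $\varphi_1$ and $\varphi_2$ be the characteristic functions of the intervals $[2,\infty)$ and $[1,\infty)$ in $\R$, respectively; they are right-continuous and increasing, they satisfy $\varphi_1\circ h=1_A$ and $\varphi_2\circ h=1_B$, and $\varphi_1\le\varphi_2$ pointwise on $\R$. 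Applying the commutation hypothesis to $h$ therefore gives $T1_A=\varphi_1(Th)\le\varphi_2(Th)=T1_B$, essentially. Because $T$ is equimeasurable, Lemma~\ref{fromSetToFct}(ii) gives $T1_A=1_{\di_TA}$ and $T1_B=1_{\di_TB}$ essentially, so $\di_TA\subset\di_TB$ essentially, and $\di_T$ is monotonic.

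For the second stage, let $f,g\in{\mathcal{S}}(\R^n)$ with $f\le g$, essentially, so that $\essinf f\le\essinf g$, and fix $t\in\Q$. If $t\le\essinf g$, then $\essinf Tg=\essinf g\ge t$ by Lemma~\ref{may8lem}(i), so $\{x:Tg(x)\ge t\}$ is essentially all of $\R^n$ and trivially contains $\{x:Tf(x)\ge t\}$ essentially. If $t>\essinf g$, then $\{x:f(x)\ge t\}$ and $\{x:g(x)\ge t\}$ lie in $\cL^n$ and the former is contained in the latter; applying the commutation hypothesis with $\varphi$ the characteristic function of $[t,\infty)$ to $f$ and to $g$, and using Lemma~\ref{fromSetToFct}(ii) again, gives $\{x:Tf(x)\ge t\}=\di_T\{x:f(x)\ge t\}$ and $\{x:Tg(x)\ge t\}=\di_T\{x:g(x)\ge t\}$ essentially, so monotonicity of $\di_T$ yields $\{x:Tf(x)\ge t\}\subset\{x:Tg(x)\ge t\}$ essentially. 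Let $N$ be the union over $t\in\Q$ of the null sets $\{x:Tf(x)\ge t\}\setminus\{x:Tg(x)\ge t\}$; then $N$ is $\cH^n$-null, and for $x\notin N$ we must have $Tf(x)\le Tg(x)$, since otherwise a rational $t$ with $Tg(x)<t\le Tf(x)$ would place $x$ in $N$. Hence $T$ is monotonic, and being equimeasurable, a rearrangement.

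The only substantive idea is the reduction carried out in the first stage: forming $h=1_A+1_B$ exhibits both $1_A$ and $1_B$ as monotone functions of the single function $h$, so one use of the commutation hypothesis for $h$ already compares $T1_A$ with $T1_B$ and thereby shows $\di_T$ is monotonic. I expect this to be the main (essentially the only) obstacle; the rest is bookkeeping, the only care needed being that the auxiliary functions $h$, $\varphi_j\circ h$, $\varphi\circ f$, and $\varphi\circ g$ all lie in ${\mathcal{S}}(\R^n)$, so that $T$ may be applied to them, and that the case $t\le\essinf g$ be isolated so that the relevant super-level sets genuinely belong to $\cL^n$ when $\di_T$ is invoked.
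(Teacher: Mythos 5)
Your proof is correct and rests on exactly the same key device as the paper's proof, namely forming $h=1_A+1_B$ for $A\subset B$ and applying the commutation hypothesis to $h$ (the idea the paper attributes to Van Schaftingen). The only divergence is cosmetic: where the paper first derives formula \eqref{eqoct62}, computes $Th=1_{(\di_TA)\cup\di_TB}+1_{\di_TA}$, and invokes equimeasurability to compare measures of $\di_TA\cup\di_TB$ and $\di_TB$, you deduce $T1_A\le T1_B$ directly from $\varphi_1\le\varphi_2$ and then use Lemma~\ref{fromSetToFct}(ii), which bypasses the intermediate representation and is a clean minor simplification.
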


\begin{proof}
We shall ignore sets of $\cH^n$-measure zero in this proof. Let $f\in {\mathcal{S}}(\R^n)$ and for $c\in \R$, define $\varphi_c(t)=1$ if $t\ge c$ and $\varphi_c(t)=0$ if $t<c$. Note that $\varphi_c$ is right-continuous and increasing, and for $g\in {\mathcal{S}}(\R^n)$, we have $1_{\{x: g(x)\ge c\}}=\varphi_c\circ g\in {\mathcal S}(\R^n)$, by the first part of the proof of Theorem~\ref{coroct241} (which does not involve $T$), with $\varphi$ and $f$ replaced by $\varphi_c$ and $g$, respectively.  Using this with $g=Tf$ and $g=f$ and our assumption on $T$, we obtain
$$1_{\{x: Tf(x)\ge c\}}=\varphi_c(Tf)=T(\varphi_c\circ f)=T1_{\{x: f(x)\ge c\}}.$$
Hence,
\begin{align*}
\{x: Tf(x)\ge c\}=
\begin{cases}
\di_T{\{x: f(x)\ge c\}},&\text{if $c>\essinf f$},\\
\R^n,& {\text{otherwise}},
\end{cases}
\end{align*}
where \eqref{eqoct72} was used in the first case and Lemma \ref{may8lem}(i) in the second case.  As $Tf(x)=\sup\{c\in \Q: Tf(x)\ge c\}$, the map $T$ satisfies \eqref{eqoct62}.

Suppose that $A\subset B\subset\R^n$ and let $h=1_A+1_B$.  It follows easily from \eqref{eqoct62} that $Th=1_{(\di_T A)\cup \di_T B}+1_{\di_T A}$.  Since $T$ is equimeasurable, we have $\cH^n((\di_T A)\cup \di_T B)=\cH^n(\di_T B)$, so $\di_T A\subset \di_T B$.  Thus $\di_T$ is monotonic.  This implies that if $f, g\in {\mathcal{S}}(\R^n)$ and $f\le g$, then $\di_T\{z:f(z)\ge t\}\subset \di_T\{z:g(z)\ge t\}$, and then $Tf\le Tg$ is a consequence of \eqref{eqoct62}.
\end{proof}

\begin{ex}\label{may1ex3}
{\rm  Let $X={\mathcal{M}}(\R^n)$ or ${\mathcal{M}}_+(\R^n)$.  There is a rearrangement $T:X\to X$ such that $T\neq \Id$ but $T=\Id$ on $\cV(\R^n)$.  In particular, Theorem~\ref{lemapril30}(ii) does not hold.   Indeed, for $f\in X$, let
$$t_f=\inf\{t\ge 0: \cH^n(\{x:f(x) >t\})<\infty\}$$
and let $A_f=\{x:f(x) \ge t_f\}$.  Define
$$Tf(x)=\begin{cases}
f(x),& {\text{if $x\in A_f$,}}\\
\min\{f(x)+1,t_f\},& {\text{if $x\not\in A_f$.}}
\end{cases}$$
Note that if $f\in \cV(\R^n)$, then $t_f=0$ and $A_f=\R^n$, so $T=\Id$ on $\cV(\R^n)$.  Let
$$f(x)=\begin{cases}
0,& {\text{if $x\in B^n$,}}\\
\frac{\|x\|}{\|x\|-1},& {\text{if $x\not\in B^n$.}}
\end{cases}$$
Then $f\in{\mathcal{M}}_+(\R^n)$, $t_f=1$, and $A_f=\R^n\setminus B^n$, so $Tf=f+1_{B^n}\neq f$.

We claim that $T$ is a rearrangement.  Note first that $Tf\ge f$.  Let $f\in X$ and suppose that $t\ge t_f$.  If $f(x)>t$, then $x\in A_f$, so $Tf(x)=f(x)>t$.  Conversely, if $Tf(x)>t$, then $Tf(x)>t_f$, so $x\in A_f$, implying that $Tf(x)=f(x)$ and thus $f(x)>t$.  Hence
$$\{x:Tf(x)>t\}=\{x:f(x)>t\}.$$
Now suppose that $t<t_f$.  Then $\cH^n(\{x:f(x)>t\})=\infty$ by the definition of $t_f$.  But if $f(x)>t$, then $Tf(x)\ge f(x)>t$, so $\cH^n(\{x:Tf(x)>t\})=\infty$.  This proves that $T$ is equimeasurable.

Let $f,g\in X$ satisfy $f\le g$.  Then $t_f\le t_g$.  If $x\in A_f$, then $Tf(x)=f(x)\le g(x)\le Tg(x)$.  If $x\in A_g\setminus A_f$, then
$$Tg(x)=g(x)\ge t_g\ge \min\{f(x)+1,t_g\}\ge \min\{f(x)+1,t_f\}=Tf(x).$$
Finally, if $x\not\in A_f\cup A_g$, then
$$Tg(x)= \min\{g(x)+1,t_g\}\ge \min\{f(x)+1,t_f\}=Tf(x).$$
This proves that $T$ is monotonic. \qed}
\end{ex}

\section{Pointwise maps between functions}\label{Pointwise}

\begin{thm}\label{thmm1}
Let $H\in {{\mathcal{G}}(n,n-1)}$ be oriented, let $X={\mathcal{M}}(\R^n)$, ${\mathcal{M}}_+(\R^n)$, ${\mathcal{S}}(\R^n)$, or $\cV(\R^n)$, and suppose that $T:X\to X$ is pointwise with respect to $H$.	Then $T$ is equimeasurable if and only if its associated functions $F^+$ and $F^-$ satisfy
\begin{equation}\label{Fvalues}
\{ F^+(r,s),F^-(s,r)\}=\{r,s\}
\end{equation}
for $(r,s)\in D^2$, the common domain of $F^+$ and $F^-$.
\end{thm}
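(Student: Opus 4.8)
The plan is to reduce equimeasurability of $T$ to a condition on one reflected pair $\{x,x^{\dagger}\}$ at a time. Let $\Omega=\inte H^+$ and let $\sigma(x)=x^{\dagger}$ denote reflection in $H$; since $\sigma$ is an isometry fixing the $\cH^n$-null set $H$ and interchanging $\Omega$ with $\inte H^-$, every measurable $g:\R^n\to\R$ satisfies, for each $t\in\R$,
\begin{equation}\label{reflsplit}
\cH^n(\{g>t\})=\cH^n(\{x\in\Omega: g(x)>t\})+\cH^n(\{x\in\Omega: g(x^{\dagger})>t\})=:\Phi_g(t).
\end{equation}
So $T$ is equimeasurable if and only if $\Phi_{Tf}=\Phi_f$ for all $f\in X$. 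For $x\in\Omega$ the pointwise property \eqref{polF} gives $Tf(x)=F^+(f(x),f(x^{\dagger}))$ and $Tf(x^{\dagger})=F^-(f(x^{\dagger}),f(x))$, so $\Phi_{Tf}(t)$ is obtained from $\Phi_f(t)$ by replacing, for each $x\in\Omega$, the ordered pair $(f(x),f(x^{\dagger}))$ by $\bigl(F^+(f(x),f(x^{\dagger})),F^-(f(x^{\dagger}),f(x))\bigr)$ before forming the two super-level measures.

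For the ``if'' direction, assume \eqref{Fvalues}. Then for each $x$ this replacement pair is a permutation of $(f(x),f(x^{\dagger}))$: when $f(x)\neq f(x^{\dagger})$ the left-hand side of \eqref{Fvalues} is a genuine two-element set equal to $\{f(x),f(x^{\dagger})\}$, and when $f(x)=f(x^{\dagger})$ it is a singleton, so \eqref{Fvalues} forces both $F$-values to equal that common value. A permutation of $(f(x),f(x^{\dagger}))$ contributes exactly the same amount to each of the two super-level measures appearing in $\Phi$, so $\Phi_{Tf}=\Phi_f$ and $T$ is equimeasurable (the relevant sets are measurable because $f,Tf\in\mathcal M(\R^n)$).

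For the converse, assume $T$ equimeasurable and fix $(r,s)\in D^2$; we must show $\{F^+(r,s),F^-(s,r)\}=\{r,s\}$. Pick a ball $U\subseteq\Omega$, put $m=\cH^n(U)\in(0,\infty)$, and note that $U^{\dagger}\subseteq\inte H^-$ is disjoint from $U$ while $U\cup U^{\dagger}$ is $\sigma$-invariant. Assume first that a ``base level'' $c\in D$ with $c<\min\{r,s\}$ exists (always true when $D=\R$, and true when $D=[0,\infty)$ and $\min\{r,s\}>0$, taking $c=0$). Let $f=r\,1_U+s\,1_{U^{\dagger}}+c\,1_{\R^n\setminus(U\cup U^{\dagger})}$; then $f\in X$ (when $D=\R$ one has $\essinf f=c$ and $\{f>t\}\subseteq U\cup U^{\dagger}$ for $t>c$, so $f\in\mathcal S(\R^n)$; when $D=[0,\infty)$ one has $c=0$ and $f=r\,1_U+s\,1_{U^{\dagger}}\in\mathcal V(\R^n)$). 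Using $x\in U\Leftrightarrow x^{\dagger}\in U^{\dagger}$, that $f\equiv c$ off $U\cup U^{\dagger}$, and that $F^+$ and $F^-$ agree on the diagonal, one finds $Tf$ is constant equal to $p:=F^+(r,s)$ on $U$, to $q:=F^-(s,r)$ on $U^{\dagger}$, and to $d:=F^+(c,c)$ off $U\cup U^{\dagger}$. The only infinite-measure super-level set of $f$ (of $Tf$) is the complement of $U\cup U^{\dagger}$, on which $f\equiv c$ (resp.\ $Tf\equiv d$); hence $\cH^n(\{f>t\})=\infty$ iff $t<c$, $\cH^n(\{Tf>t\})=\infty$ iff $t<d$, and equimeasurability forces $c=d$. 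For $t\ge c$, equimeasurability says that $\{p,q\}$ and $\{r,s\}$ have the same number of elements exceeding $t$, counted with multiplicity. Since $c<\min\{r,s\}$: for $t\in[c,\min\{r,s\})$ this count is $2$, so $p,q>t$ and thus $\min\{p,q\}\ge\min\{r,s\}$; for $t\ge\min\{r,s\}$ the count is $\le 1$, so not both $p,q$ exceed $t$, whence $\min\{p,q\}=\min\{r,s\}$; and comparing counts once more on $[\min\{r,s\},\infty)$ shows the larger of $p,q$ equals $\max\{r,s\}$. Hence $\{p,q\}=\{r,s\}$.

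Finally, when $D=[0,\infty)$ and $\min\{r,s\}=0$ no such base level exists; by the reflection structure it suffices to treat $s=0$ (the case $r=0$ is identical after swapping $U$ and $U^{\dagger}$, hence $F^+$ and $F^-$, and $r=s=0$ is immediate). Take $f=r\,1_U\in\mathcal V(\R^n)\subseteq X$; as before $Tf$ equals $F^+(r,0)$ on $U$, $F^-(0,r)$ on $U^{\dagger}$, and $F^+(0,0)$ elsewhere, and matching super-level measures with those of $f$ forces $F^+(0,0)=0$ together with the identity that, for every $t\ge 0$, the number of elements of $\{F^+(r,0),F^-(0,r)\}$ exceeding $t$ equals that of $\{r,0\}$. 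Since $Tf\in X\subseteq\mathcal M_+(\R^n)$ both values are nonnegative, and a pair $\{a,b\}$ of nonnegative numbers whose count of elements exceeding $t$ agrees, for all $t\ge 0$, with that of $\{r,0\}$ must equal $\{r,0\}$ (the agreement forces $\min\{a,b\}=0$, hence one of them is $0$ and the other is $r$); thus $\{F^+(r,0),F^-(0,r)\}=\{r,0\}$, completing the proof. The main obstacle is not conceptual but one of bookkeeping: the test function must be chosen to lie in whichever of the four classes $X$ is at hand while still isolating the single pair $(r,s)$, and because $F^+$ and $F^-$ are tied to fixed sides of $H$ one cannot reduce to $r\le s$, which is exactly why the degenerate subcase needs a separate argument.
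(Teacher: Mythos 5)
Your proof is correct and takes essentially the same approach as the paper's: for the ``only if'' direction you use the same test functions $r\,1_U+s\,1_{U^\dagger}+c\,1_{\R^n\setminus(U\cup U^\dagger)}$ and the same threshold-counting, and for the ``if'' direction you use the same per-pair permutation observation, which you package neatly via the functional $\Phi_g$ in place of the paper's explicit partition into the sets $M$, $M^\pm$, $M^\pm_\dagger$. One minor wording point in the ``if'' direction: a permutation of the pair $(f(x),f(x^\dagger))$ preserves, for each $t$, the number of entries of the pair exceeding $t$ --- hence the integrand of $\Phi$ and therefore $\Phi_f(t)$ itself --- but not necessarily each of the two integrals composing $\Phi_f(t)$ separately, so ``contributes exactly the same amount to each of the two super-level measures'' should be read as referring to their sum, which is what your conclusion $\Phi_{Tf}=\Phi_f$ actually uses.
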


\begin{proof}
We first consider the case when $X={\mathcal{M}}(\R^n)$ or ${\mathcal{S}}(\R^n)$, so that $D=\R$.  Assume that $T$ is equimeasurable.  We claim that \begin{equation}\label{eq:diagonId}
F^+(r,r)=F^-(r,r)=r
\end{equation}
for $r\in \R$.  To see this, let $f\equiv r$ be constant on $\R^n$. From \eqref{polF} and the fact that $F^+$ and $F^-$ coincide on the diagonal of $\R^2$, we see that $Tf\equiv F^+(r,r)=F^-(r,r)$ is also constant.  This and (\ref{equi}) yield (\ref{eq:diagonId}) (when $X={\mathcal{S}}(\R^n)$, this is also a consequence of Lemma~\ref{may8lem}(iii)).

Now fix $r,s\in \R$. Let $A\subset \inte H^+$ be compact with ${\mathcal{H}}^{n}(A)>0$, let $c<\min\{r,s\}$, and let
$$f(x)=r 1_{A}(x)+s1_{A^\dagger}(x)+c 1_{\R^n\setminus (A\cup A^\dagger)}(x).$$
Note that $f\in {\mathcal{S}}(\R^n)$.  From \eqref{polF} and \eqref{eq:diagonId}, we have
\begin{equation}\label{polFspec}
Tf(x)=\begin{cases}
F^+(r,s),& {\text{if $x\in A$}},\\
F^-(s,r),& {\text{if $x\in A^\dagger$}},\\
c,& {\text{otherwise}}.
\end{cases}
\end{equation}
If $c<t<\min\{r,s\}$, both sides of \eqref{equi} equal $2{\mathcal{H}}^{n}(A)$, while if $t=\max\{r,s\}$, both sides are zero. Thus
\begin{equation}\label{minmax}
\min\{r,s\}\le  F^+(r,s),F^-(s,r)\le \max\{r,s\}.
\end{equation}
If $r\ne s$, we can choose $t$ in \eqref{equi} with $\min\{r,s\}< t< \max\{r,s\}$.  Then both sides of \eqref{equi} equal ${\mathcal{H}}^{n}(A)$, so $F^+(r,s)\le t$ and $F^-(s,r)>t$ or vice versa. As $\min\{r,s\}<t<\max\{r,s\}$ was arbitrary,
$F^+(r,s)=\min\{r,s\}$ and $F^-(s,r)=\max\{r,s\}$ or vice versa. This proves \eqref{Fvalues} when $r\ne s$, and \eqref{Fvalues} holds trivially when $r=s$ due to \eqref{minmax}.

Now assume that \eqref{Fvalues} holds and let $f\in X$. Define
$$M=\{x\in \R^n: f(x)=f^\dagger(x)\},$$
$$M^\pm=\{x\in H^\pm\setminus M: F^\pm(f(x),f^\dagger(x))=f(x)\},$$
and
$$M^\pm_\dagger=\{x\in H^\pm\setminus M: F^\pm(f(x),f^\dagger(x))=f^\dagger(x)\}.$$
By \eqref{Fvalues}, these five sets form a partition of $\R^n$. Note that if $x\not\in M$, then $Tf(x)=f(x)$ if and only if $x\in M^+\cup M^-$ and $Tf(x)=f^{\dagger}(x)$ if and only if $x\in M^+_{\dagger}\cup M^-_{\dagger}$.  This and the fact that by \eqref{Fvalues} we have $Tf=f$ on $M$ yield
\begin{equation}\label{9}
\{x:Tf(x)>t\}=(\{x:f(x)>t\}\cap (M\cup M^+\cup M^-))\cup(\{x:f^\dagger(x)>t\}\cap (M^+_\dagger\cup M^-_\dagger))
\end{equation}
for $t\in \R$. Using the definitions of $M^-_{\dagger}$ and $M$, together with \eqref{Fvalues}, we obtain
\begin{eqnarray*}
x\in (M^-_{\dagger})^{\dagger}\Leftrightarrow x^{\dagger}\in M^-_{\dagger}
&\Leftrightarrow & x^{\dagger}\in H^-\setminus M~{\text{and}}~F^-(f(x^{\dagger}),f^{\dagger}(x^{\dagger}))=
f^{\dagger}(x^{\dagger})\\
&\Leftrightarrow & x\in H^+\setminus M~{\text{and}}~F^-(f^{\dagger}(x),f(x))=
f(x)\\
&\Leftrightarrow & x\in H^+\setminus M~{\text{and}}~F^+(f(x),f^{\dagger}(x))=
f^{\dagger}(x)\Leftrightarrow x\in M^+_{\dagger}.
\end{eqnarray*}
Consequently, $(M^-_{\dagger})^{\dagger}=M^+_{\dagger}$ and
$(M^+_{\dagger})^{\dagger}=M^-_{\dagger}$.  Therefore
$$\left(\{x:f^\dagger(x)>t\}\cap (M^+_\dagger\cup M^-_\dagger)\right)^\dagger=\{x:f(x)>t\}\cap (M^-_\dagger\cup M^+_\dagger).$$
In particular,
$${\mathcal{H}}^{n}\left(\left(\{x:f^\dagger(x)>t\}\cap (M^+_\dagger\cup M^-_\dagger)\right)\right)={\mathcal{H}}^{n}\left(\{x:f(x)>t\}\cap (M^-_\dagger\cup M^+_\dagger)\right).$$
It follows from (\ref{9}) that ${\mathcal{H}}^{n}(\{x:Tf(x)>t\})={\mathcal{H}}^{n}(\{x:f(x)>t\})$, so $T$ is equimeasurable.  This completes the proof when $X={\mathcal{M}}(\R^n)$ or ${\mathcal{S}}(\R^n)$.

Now suppose that $X={\mathcal{M}}_+(\R^n)$ or ${\mathcal{V}}(\R^n)$, so that $D=[0,\infty)$. The second part of the above proof can be applied without change.  If $X={\mathcal{M}}_+(\R^n)$, the first part of the above proof also still applies, but when $X={\mathcal{V}}(\R^n)$, we cannot use constant functions other than $f\equiv 0$ and thus can only obtain the weaker version
\begin{equation}\label{weak}
F^+(0,0)=F^-(0,0)=0
\end{equation}
of (\ref{eq:diagonId}).  Nevertheless, we can follow the argument in the second paragraph when $\min\{r,s\}>0$ and $c=0$, and this yields (\ref{minmax}) when $r,s>0$. Setting $r=s>0$ in (\ref{minmax}) and using (\ref{weak}), we retrieve (\ref{eq:diagonId}) for $r\ge 0$.  With (\ref{eq:diagonId}) in hand, we may assume that $r=0$ and $s>0$.  Then by using (\ref{equi}) with $t=s$ and with $0<t\le s$, one obtains (\ref{minmax}) for $r,s\ge 0$ and the conclusion follows easily as before.
\end{proof}

\begin{cor}\label{corm2}
Let $H\in {{\mathcal{G}}(n,n-1)}$ be oriented, let $X={\mathcal{M}}(\R^n)$, ${\mathcal{M}}_+(\R^n)$, ${\mathcal{S}}(\R^n)$, or $\cV(\R^n)$, and suppose that $T:X\to X$ is pointwise with respect to $H$. If $T$ is equimeasurable, then it maps characteristic functions of sets in ${\mathcal{L}}^n$ to characteristic functions of sets in ${\mathcal{L}}^n$ and $\di_T=\Id$, $\di_T=\dagger$, $\di_T=\di_{P_H}$, or $\di_T=\di_{P_H}^{\dagger}$, where $\Id$ is the identity map and $\dagger$ is reflection in $H$.
\end{cor}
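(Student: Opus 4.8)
The plan is to apply Theorem~\ref{thmm1}. Since $T$ is pointwise with respect to $H$ and equimeasurable, its associated functions satisfy \eqref{Fvalues}, i.e., $\{F^+(r,s),F^-(s,r)\}=\{r,s\}$ on their common domain, and in particular for $r,s\in\{0,1\}$. For $A\in\cL^n$ the reflection $1_A^{\dagger}=1_{A^{\dagger}}$ is again a characteristic function, so the pointwise formula \eqref{polF} evaluates $T1_A(x)$ only at arguments $(1_A(x),1_{A^{\dagger}}(x))\in\{0,1\}^2$; by \eqref{Fvalues} the resulting values lie in $\{0,1\}$, so $T1_A$ takes only the values $0$ and $1$ and hence equals $1_B$ with $B=\{x:T1_A(x)=1\}$. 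By Lemma~\ref{fromSetToFct}(i), $B=\di_TA\in\cL^n$, which proves the first assertion.

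To determine $\di_T$, I would read off the values of $F^+$ and $F^-$ on $\{0,1\}^2$ from \eqref{Fvalues}. Putting $r=s$ forces $F^{\pm}(0,0)=0$ and $F^{\pm}(1,1)=1$. Putting $(r,s)=(1,0)$ and $(0,1)$ shows that $a:=F^+(1,0)$ and $b:=F^+(0,1)$ may each be chosen independently in $\{0,1\}$, after which $F^-(0,1)=1-a$ and $F^-(1,0)=1-b$ are determined. The pair $(a,b)\in\{0,1\}^2$ is a fixed feature of $T$, independent of $A$, so only four cases remain.

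The last step is a short case computation of $\di_TA=\{x:T1_A(x)=1\}$, obtained by partitioning $\R^n$ into $A\cap A^{\dagger}$, $A\setminus A^{\dagger}$, $A^{\dagger}\setminus A$, and $\R^n\setminus(A\cup A^{\dagger})$, intersecting each piece with $H^+$ and with $H^-$, and applying the values of $F^{\pm}$ above through \eqref{polF}. One obtains: $(a,b)=(1,0)$ gives $\di_TA=A$, so $\di_T=\Id$; $(a,b)=(0,1)$ gives $\di_TA=A^{\dagger}$, so $\di_T=\dagger$; $(a,b)=(1,1)$ gives $\di_TA=((A\cup A^{\dagger})\cap H^+)\cup((A\cap A^{\dagger})\cap H^-)$, which is precisely $\di_{P_H}A$ by the definitions \eqref{pol} and \eqref{eqIndic}; and $(a,b)=(0,0)$ gives the reflected set $((A\cap A^{\dagger})\cap H^+)\cup((A\cup A^{\dagger})\cap H^-)=(\di_{P_H}A)^{\dagger}$, so $\di_T=\di_{P_H}^{\dagger}$. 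Since $(a,b)$ does not depend on $A$, one of these four identities holds for all $A\in\cL^n$. I do not anticipate a real obstacle: the argument is essentially bookkeeping, the only point needing care being the verification in the last two cases that the computed unions are indeed $\di_{P_H}A$ and its reflection $\di_{P_H}^{\dagger}A=\di_{P_H^{\dagger}}A$, which amounts to unwinding the $\max$/$\min$ in \eqref{pol} and the definition \eqref{eqIndic} of the induced map.
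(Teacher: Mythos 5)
Your proposal is correct and follows essentially the same route as the paper's proof: apply Theorem~\ref{thmm1} to obtain \eqref{Fvalues}, read off the values of $F^\pm$ on $\{0,1\}^2$, enumerate the four resulting possibilities, and identify each with $\Id$, $\dagger$, $\di_{P_H}$, or $\di_{P_H}^{\dagger}$. Your parameterization by $(a,b)=(F^+(1,0),F^+(0,1))$ and your explicit set-theoretic computation of $\di_T A$ are just a slightly more detailed bookkeeping of the same four cases the paper lists.
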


\begin{proof}
By Theorem~\ref{thmm1}, \eqref{Fvalues} holds, implying that $F^\pm(0,0)=0$, $F^\pm(1,1)=1$,
$$\text{either }F^+(1,0)=1\text{ and }F^-(0,1)=0,\text{ or }F^+(1,0)=0\text{ and } F^-(0,1)=1,$$
and
$$\text{either }F^+(0,1)=0\text{ and }F^-(1,0)=1\text{ or }F^+(0,1)=1\text{ and }F^-(1,0)=0.$$
Therefore we can have the following four combinations: (i) $F^+(r,s)=F^-(r,s)=r$ for $r,s\in\{0,1\}$, (ii) $F^+(r,s)=\max\{r,s\}$ and $F^-(r,s)=\min\{r,s\}$ for $r,s\in\{0,1\}$, (iii) $F^+(r,s)=\min\{r,s\}$ and $F^-(r,s)=\max\{r,s\}$ for $r,s\in\{0,1\}$, or (iv) $F^+(r,s)=F^-(r,s)=s$ for $r,s\in\{0,1\}$.  These correspond to $T1_A=1_A$, $T1_A=P_H1_A$, $T1_A=(P_H1_A)^{\dagger}$, and $T_A=(1_A)^{\dagger}$, each for all $A\in {\mathcal{L}}^n$, respectively.  In particular, $T$ maps characteristic functions of sets in ${\mathcal{L}}^n$ to characteristic functions of sets in ${\mathcal{L}}^n$, and $\di_T$ is $\Id$, $\di_{P_H}$, $\di{P_H^{\dagger}}$, or $\dagger$.
\end{proof}

Despite the previous result, maps $T$ that are both pointwise and equimeasurable need not be one of the four special maps, $T=\Id$, $T=\dagger$, $T=P_H$, or $T=P_H^{\dagger}$.  Indeed, by Theorem~\ref{thmm1}, it is enough to define $T$ via associated functions $F^+$ and $F^-$ that satisfy (\ref{Fvalues}).  For example, one can take
$$
F^+(r,s)=\begin{cases}
r,& {\text{if $r\in \Q\cap D$}},\\
s,& {\text{if $r\in D\setminus\Q$}}\\
\end{cases}
~~\quad~~{\text{and}}~~\quad~~
F^-(s,r)=\begin{cases}
s,& {\text{if $r\in \Q\cap D$}},\\
r,& {\text{if $r\in D\setminus\Q$}}.\\
\end{cases}
$$
The next few results supply further conditions that eliminate such exotic examples.

\begin{lem}\label{lemsept16}
Let $H\in {{\mathcal{G}}(n,n-1)}$ be oriented, let $X={\mathcal{M}}(\R^n)$, ${\mathcal{M}}_+(\R^n)$, ${\mathcal{S}}(\R^n)$, or $\cV(\R^n)$, and suppose that $T:X\to X$ is pointwise with respect to $H$ and equimeasurable. If the functions $F^+$ and $F^-$ associated with $T$ are continuous on $D^2$, then $T=\Id$, $T=\dagger$, $T=P_H$, or $T=P_H^{\dagger}$.
\end{lem}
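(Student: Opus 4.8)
The plan is to exploit Theorem~\ref{thmm1}, which tells us that equimeasurability forces the identity $\{F^+(r,s),F^-(s,r)\}=\{r,s\}$ for all $(r,s)\in D^2$. Fix $r\neq s$ in $D$. Then for each such pair exactly one of two alternatives holds: either
$$F^+(r,s)=\min\{r,s\}\ \text{and}\ F^-(s,r)=\max\{r,s\},$$
or
$$F^+(r,s)=\max\{r,s\}\ \text{and}\ F^-(s,r)=\min\{r,s\}.$$
Call the pair $(r,s)$ of \emph{type $P$} in the first case and of \emph{type $\mathrm{Id}$} in the second; on the diagonal both types agree with $F^\pm(r,r)=r$. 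The key point is that continuity of $F^+$ (equivalently $F^-$) rules out any switching of type on the connected set $\{(r,s)\in D^2:r<s\}$, and likewise on $\{(r,s)\in D^2:r>s\}$. Indeed, if $(r_0,s_0)$ with $r_0<s_0$ is of type $\mathrm{Id}$, so $F^+(r_0,s_0)=s_0$, then by continuity $F^+(r,s)$ is close to $s_0$ for $(r,s)$ near $(r_0,s_0)$; but for $(r,s)$ of type $P$ we would have $F^+(r,s)=\min\{r,s\}$, which is close to $r_0\neq s_0$ — contradiction once $(r,s)$ is close enough. So the type is locally constant, hence constant, on each of the two open connected "triangles" $\{r<s\}$ and $\{r>s\}$ (both are convex, hence connected; this uses that $D=\R$ or $D=[0,\infty)$ is an interval).

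Thus there are at most four global possibilities, according to the type on $\{r<s\}$ and on $\{r>s\}$ independently. I would then check that each of the four combinations yields, via (\ref{polF}), exactly one of the four maps. If $\{r<s\}$ and $\{r>s\}$ are both of type $\mathrm{Id}$, then $F^+(r,s)=r$ and $F^-(s,r)=s$ for all $r,s$ (the diagonal is automatic), which by (\ref{polF}) gives $Tf(x)=f(x)$ for all $x$, i.e.\ $T=\Id$. If both are of type $P$, then $F^+(r,s)=\min\{r,s\}$ on $\{r<s\}$ and $F^+(r,s)=\min\{r,s\}$ on $\{r>s\}$ as well, so $F^+=\min$ and $F^-=\max$ everywhere — wait, one must be careful here: "type $P$ on $\{r>s\}$" means $F^+(r,s)=\max\{r,s\}=r$ when $r>s$. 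Let me re-index consistently: write the condition in terms of $F^+$. On $\{r<s\}$, type $\mathrm{Id}$ means $F^+(r,s)=s$, type $P$ means $F^+(r,s)=r=\min$. On $\{r>s\}$, type $\mathrm{Id}$ means $F^+(r,s)=s=\min$ wait no — if $(r,s)$ has $r>s$ then the "type $\mathrm{Id}$" reading $F^+(r,s)=\max\{r,s\}$... I should just track the cleaner statement: type $\mathrm{Id}$ means $F^+$ returns its \emph{first} argument, type $P$ means $F^+$ returns the one lying in $H^+$'s favored extreme, i.e.\ $F^+=\max$. Concretely, the four cases are: $F^+\equiv$ first-coordinate and $F^-\equiv$ first-coordinate ($T=\mathrm{Id}$); $F^+=\max$, $F^-=\min$ ($T=P_H$); $F^+=\min$, $F^-=\max$ ($T=P_H^\dagger$); $F^+\equiv$ second-coordinate, $F^-\equiv$ second-coordinate ($T=\dagger$). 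I would verify these four identifications directly from (\ref{polF}), noting that the "first/second coordinate" cases and the "$\min$/$\max$" cases are precisely the ones consistent with (\ref{Fvalues}), and that $F^-$ is then determined by $F^+$ through (\ref{Fvalues}) so only $F^+$'s type on the two triangles is free.

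The main obstacle is purely bookkeeping: being careful that "the type is constant on $\{r<s\}$" and "constant on $\{r>s\}$" are genuinely independent, yet the relation (\ref{Fvalues}) couples $F^+(r,s)$ with $F^-(s,r)$ (note the swapped arguments), so one must make sure the four combinations of types on the two triangles really do collapse to exactly the four maps and not to some hybrid. The cleanest way is to observe that $F^+$ on $\{r\le s\}$ together with $F^+$ on $\{r\ge s\}$ determines $F^+$ on all of $D^2$ (the diagonal being forced), that $F^-$ is then forced by (\ref{Fvalues}), and that continuity of $F^+$ across the diagonal imposes no further constraint because both candidate values ($\min$, $\max$, first-coord, second-coord) agree on the diagonal. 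A small point worth stating explicitly: for $X=\cV(\R^n)$ one has $D=[0,\infty)$ but the triangles $\{0\le r<s\}$ and $\{0\le s<r\}$ are still connected, and Theorem~\ref{thmm1} already supplies (\ref{Fvalues}) on all of $D^2$ in that case too, so no separate argument is needed.
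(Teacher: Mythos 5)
Your argument is correct and matches the paper's proof: both use (\ref{Fvalues}) to get the pointwise dichotomy $F^+(r,s)\in\{r,s\}$, then use continuity and connectedness of the two triangles $\{r<s\}$ and $\{r>s\}$ (the paper uses $E_1=\{r\ge s\}$, $E_2=\{r\le s\}$) to conclude that the choice is consistent on each triangle, and finally enumerate the four resulting combinations as $\Id$, $\dagger$, $P_H$, $P_H^\dagger$. The mid-proof "re-indexing" detour you took is just a bookkeeping tangle that you correctly untangled; the final case analysis and identification of the four maps is right.
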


\begin{proof}
As $F^+$ is continuous on $D^2$, \eqref{Fvalues} implies that either $F^+(r,s)=r$ for $(r,s)\in D^2$ or $F^+(r,s)=s$ for $(r,s)\in D^2$.  Let
\begin{equation*}
E_1=\{(r,s)\in D^2: r\ge s\}~\quad{\text{and}}~\quad E_2=\{(r,s)\in D^2: r\le s\}.
\end{equation*}
As $E_1$ is connected, $\{(r,s)\in E_1: F^+(r,s)=r\}$ is either empty or $E_1$. If it is empty, then $F^+(r,s)=s$ on $E_1$. It follows that either $F^+(r,s)=r=\max\{r,s\}$ on $E_1$ or $F^+(r,s)=s=\min\{r,s\}$ on $E_1$. In the same way, either $F^+(r,s)=\max\{r,s\}$ on $E_2$ or $F^+(r,s)=\min\{r,s\}$ on $E_2$.  Similar arguments show that the same possibilities hold when $F^+$ is replaced by $F^-$.  Taking \eqref{Fvalues} into account, we arrive at four possibilities for $F^+$ and $F^-$ on $D^2$, corresponding to those for $T$ in the statement of the corollary.
\end{proof}

Motivated by the previous lemma, we now seek conditions ensuring that the associated functions $F^+$ and $F^-$ are continuous on $D^2$.

\begin{lem}\label{thmm4}
Let $H\in {{\mathcal{G}}(n,n-1)}$ be oriented, let $X={\mathcal{M}}(\R^n)$, ${\mathcal{M}}_+(\R^n)$, ${\mathcal{S}}(\R^n)$, or $\cV(\R^n)$, and suppose that $T:X\to X$ is pointwise with respect to $H$ with associated functions $F^+$ and $F^-$.  Then

\noindent{\rm{(i)}}  $T$ is monotonic if and only if $F^+$ and $F^-$ are increasing in each variable, and

\noindent{\rm{(ii)}}  if $T$ is a rearrangement, then $F^+$ and $F^-$ are continuous on $D^2$.
\end{lem}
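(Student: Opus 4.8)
My plan is to verify~(i) directly and to prove~(ii) by first determining the one-variable sections of $F^+$ and then upgrading separate continuity to joint continuity. For~(i), the ``if'' direction is immediate from \eqref{polF}: if $F^+$ and $F^-$ are increasing in each variable and $f\le g$ essentially, then, since reflection in $H$ preserves ${\mathcal H}^n$-null sets, $f^{\dagger}\le g^{\dagger}$ essentially too, so for ${\mathcal H}^n$-almost every $x\in H^+$ we have $Tf(x)=F^+(f(x),f^{\dagger}(x))\le F^+(g(x),g^{\dagger}(x))=Tg(x)$, and likewise on $H^-$ via $F^-$; hence $T$ is monotonic. For the converse, assume $T$ is monotonic, fix $r,r',s\in D$ with $r\le r'$, choose a compact $A\subset\inte H^+$ with ${\mathcal H}^n(A)>0$ and a constant $c\le\min\{r,r',s\}$ (with $c=0$ when $D=[0,\infty)$), and put $f=r1_A+s1_{A^{\dagger}}+c1_{\R^n\setminus(A\cup A^{\dagger})}$ and $g=r'1_A+s1_{A^{\dagger}}+c1_{\R^n\setminus(A\cup A^{\dagger})}$. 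Then $f,g\in X$ and $f\le g$, while \eqref{polF} gives $Tf\equiv F^+(r,s)$ and $Tg\equiv F^+(r',s)$ on $A$, so, as ${\mathcal H}^n(A)>0$, monotonicity forces $F^+(r,s)\le F^+(r',s)$. Varying which coordinate is perturbed, and repeating with $A$ replaced by $A^{\dagger}\subset\inte H^-$ to treat $F^-$, shows that $F^+$ and $F^-$ are increasing in each variable.

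For~(ii), suppose $T$ is a rearrangement. By Theorem~\ref{thmm1}, $\{F^+(r,s),F^-(s,r)\}=\{r,s\}$ on $D^2$, hence $F^-(s,r)=r+s-F^+(r,s)$, so it suffices to prove $F^+$ continuous; by~(i), $F^+$ and $F^-$ are increasing in each variable. The crux is to show that for each fixed $s\in D$ the section $u:=F^+(\cdot,s)$ equals $\min\{\cdot,s\}$, $\max\{\cdot,s\}$, the identity $r\mapsto r$, or the constant $r\mapsto s$, each of them continuous. Indeed, $u$ is non-decreasing with $u(r)\in\{r,s\}$ and $u(s)=s$, so $I:=\{r\in D:u(r)=s\}$ is an interval containing $s$, with $u(r)=r$ off $I$; the complementary section $w:=F^-(s,\cdot)=r+s-u(r)$ then equals $r$ on $I$ and $s$ off $I$, and is non-decreasing because $F^-$ is increasing in its second variable. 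A short case analysis shows that $I$ must be $D$, $\{s\}$, $\{r\in D:r\ge s\}$, or $\{r\in D:r\le s\}$---for any other interval containing $s$, $w$ would strictly decrease across an endpoint of $I$---which are exactly the four listed possibilities. The symmetric argument, using that $F^+$ is increasing in its second variable and $F^-$ in its first, shows that each section $F^+(r,\cdot)$ is continuous as well, so $F^+$ is separately continuous.

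It remains to pass from separate to joint continuity, and here I would use monotonicity once more: for $(r_0,s_0)\in D^2$ and $(r,s)\in D^2$ with $|r-r_0|,|s-s_0|<\delta$ one has $F^+(r_0-\delta,s_0-\delta)\le F^+(r,s)\le F^+(r_0+\delta,s_0+\delta)$ (replacing $r_0\mp\delta$ and $s_0\mp\delta$ by the nearest points of $D$ when $D=[0,\infty)$ and $r_0$ or $s_0$ vanishes), and a two-step limit in $\delta$ using the separate continuity already established gives $F^+(r_0\pm\delta,s_0\pm\delta)\to F^+(r_0,s_0)$ as $\delta\downarrow 0$; the squeeze then yields continuity at $(r_0,s_0)$. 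Since $F^-(s,r)=r+s-F^+(r,s)$, $F^-$ is continuous as well. I expect the main obstacle to be the case analysis in the middle step: one must genuinely use the monotonicity of $F^-$, not only that of $F^+$---for instance, $F^+(r,s)=r$ for $s\le 0$ and $F^+(r,s)=\max\{r,s\}$ for $s>0$ is increasing in each variable and satisfies $F^+(r,s)\in\{r,s\}$ yet is discontinuous, and it is excluded precisely because its associated $F^-$ fails to be monotone.
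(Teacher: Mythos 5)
Part (i) follows the paper's approach (the paper increases both arguments of $F^+$ simultaneously using the sets $B^n\cap H^\pm$; you fix one argument and vary the other over a compact $A\subset\inte H^+$ — a cosmetic difference, as the two formulations of ``increasing in each variable'' are equivalent). Part (ii) is correct but takes a genuinely different route. The paper argues locally: it passes to subsequences of $(r_k,s_k)\to(r,s)$ that lie in a single quadrant $D_{\pm\pm}$ and squeezes $F^+(r_k,s_k)$ between $F^+(r,s)$ and the diagonal value $F^+(r_k,r_k)=r_k$, using \eqref{Fvalues} and monotonicity of the sections. You instead classify the sections globally: since \eqref{Fvalues} gives $F^+(r,s)\in\{r,s\}$ with $F^+(s,s)=s$, and both $u=F^+(\cdot,s)$ and the paired section $w=F^-(s,\cdot)=\mathrm{id}+s-u$ are non-decreasing, the level set $I=\{r\in D:u(r)=s\}$ is an interval that must equal $D$, $\{s\}$, $D\cap[s,\infty)$, or $D\cap(-\infty,s]$, yielding the four continuous forms (constant $s$, $\mathrm{id}$, $\min\{\cdot,s\}$, $\max\{\cdot,s\}$). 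You then upgrade separate to joint continuity via the standard fact that a function monotone in each variable and separately continuous is jointly continuous; your phrase ``two-step limit in $\delta$'' should properly be implemented with two nested parameters (choose $\delta_1$ from the horizontal section at $s_0$, then $\delta_2\le\delta_1$ from the vertical section at $r_0\pm\delta_1$), since letting a single $\delta\downarrow0$ does not by itself invoke separate continuity, but the underlying idea is sound. The paper's argument is shorter and more direct; yours is longer but delivers the explicit structure of the sections as a byproduct, and your closing counterexample ($F^+(r,s)=r$ for $s\le 0$, $=\max\{r,s\}$ for $s>0$) neatly isolates why the monotonicity of $F^-$, and not just of $F^+$, is indispensable.
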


\begin{proof}
(i) It follows from the definition of a pointwise map that $T$ is monotonic if $F^+$ and $F^-$ are increasing in each variable. For the other implication, suppose that $T$ is monotonic.  Let $r_1,s_1,r_2,s_2\in D$ satisfy $r_1\le r_2$ and $s_1\le s_2$. For $i=1,2$, define $f_i\in X$ by
\begin{align}\label{eqExFct}
f_i(x)=r_i1_{B^n\cap H^+}(x)+s_i1_{B^n\cap H^-}(x)+\min\{0,r_i,s_i\}1_{\R^n\setminus B^n}(x)
\end{align}
for $x\in \R^n$.  (The last term in (\ref{eqExFct}) is to ensure that $f_i\in X$ when $X={\mathcal{S}}(\R^n)$ or ${\mathcal{V}}(\R^n)$.)  For $x\in B^n\cap H^+$ we have
$$Tf_i(x)=F^+(f_i(x),f_i^{\dagger}(x))=F^+(r_i,s_i)$$
for $i=1,2$.
Since $f_1\le f_2$ and $T$ is monotonic, $Tf_1(x)\le Tf_2(x)$ for almost all $x\in B^n\cap H^+$ and hence $F^+(r_1,s_1)\le F^+(r_2,s_2)$.  A similar argument holds for $F^-$.  It follows that $F^+$ and $F^-$ are increasing in each variable.

(ii) Suppose that $T$ is a rearrangement, i.e., equimeasurable and monotonic. Then \eqref{Fvalues} holds. Let $(r_k,s_k)$, $k\in \N$, be a sequence in $D^2$ converging to $(r,s)$.  We may assume that $r\ne s$, as otherwise \eqref{Fvalues} implies $F^\pm(r_k,s_k)\to F^\pm(r,s)$ as $k\to\infty$. Without loss of generality, suppose that $r>s$. By considering subsequences, we may also assume that $\{(r_k,s_k): k\in \N\}$ is contained in one of the four sets
\begin{align*}
D_{++}=\{(r',s')\in D^2: r'\ge r,s'\ge s\},\quad D_{+-}=\{(r',s')\in D^2: r'\ge r,s'\le s\},\\
D_{-+}=\{(r',s')\in D^2: r'\le r,s'\ge s\},\quad D_{--}=\{(r',s')\in D^2: r'\le r,s'\le s\}.
\end{align*}
If $\{(r_k,s_k): k\in \N\}\subset D_{++}$ and $F^+(r,s)=r$, then (i) implies that for sufficiently large $k$, we have
$$
r= F^+(r,s)\le  F^+(r_k,s_k)\le F^+(r_k,r_k)=r_k.
$$
Since $r_k\to r$, this shows that $F^+(r_k,s_k)\to F^+(r,s)$, and then $F^-(r_k,s_k)\to F^-(r,s)$ as $k\to\infty$ by \eqref{Fvalues}. If $\{(r_k,s_k): k\in \N\}\subset D_{++}$ and $F^+(r,s)=s$, we have $F^-(r,s)=r$ by \eqref{Fvalues}, and the same arguments can be applied to $F^-$.

Suppose that $\{(r_k,s_k): k\in \N\}\subset D_{+-}$.  Then $(r,s_k)\in D_{--}$ and $(r_k,s)\in D_{++}$, so both sides of
$$
F^+(r,s_k)\le  F^+(r_k,s_k)\le F^+(r_k,s),
$$
converge to $F^+(r,s)$ and it follows that $F^+(r_k,s_k)\to F^+(r,s)$ as $k\to\infty$.  Equation \eqref{Fvalues} now yields $F^-(r_k,s_k)\to F^-(r,s)$ as $k\to \infty$.  The remaining case $\{(r_k,s_k): k\in \N\}\subset D_{-+}$ is treated in a similar way.
\end{proof}

\begin{lem}\label{lemcorm3}
Let $H\in {{\mathcal{G}}(n,n-1)}$ be oriented, let $X={\mathcal{M}}(\R^n)$ (or ${\mathcal{M}}_+(\R^n)$, ${\mathcal{S}}(\R^n)$, or ${\mathcal{V}}(\R^n)$), and suppose that $T:X\to X$ is pointwise with respect to $H$, equimeasurable, and maps linear functions (or piecewise linear continuous functions, respectively) to continuous functions. Then the functions $F^+$ and $F^-$ associated with $T$ are continuous on $D^2$.
\end{lem}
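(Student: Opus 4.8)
The plan is to read off from Theorem~\ref{thmm1} that, since $T$ is pointwise with respect to $H$ and equimeasurable, its associated functions satisfy \eqref{Fvalues}, i.e. $\{F^+(r,s),F^-(s,r)\}=\{r,s\}$ for all $(r,s)\in D^2$. Summing the two (possibly equal) members of $\{r,s\}$ gives $F^+(r,s)+F^-(s,r)=r+s$, hence $F^-(u,v)=u+v-F^+(v,u)$ for all $(u,v)\in D^2$; so $F^-$ is continuous on $D^2$ as soon as $F^+$ is, and it suffices to prove that $F^+$ is continuous on $D^2$. Continuity at a diagonal point is automatic: if $(r_k,s_k)\to(r,r)$ in $D^2$, then $F^+(r_k,s_k)\in\{r_k,s_k\}$ by \eqref{Fvalues} and both $r_k$ and $s_k$ tend to $r=F^+(r,r)$. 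Thus the whole content lies in proving continuity of $F^+$ at a point $(r,s)\in D^2$ with $r\ne s$, and it is here that the hypothesis on $T$ is used.

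Fix such an $(r,s)$, let $\nu$ be the unit normal to $H$ pointing into $H^+$, choose any $a'\in H\setminus\{o\}$ and any $a_n\in\R$ with the same sign as $r-s$, and set $\ell(x)=a'\cdot x+a_n(x\cdot\nu)$. Then $\ell(x^{\dagger})=a'\cdot x-a_n(x\cdot\nu)$, and the point
\[
x_1=\frac{r+s}{2\|a'\|^2}\,a'+\frac{r-s}{2a_n}\,\nu
\]
satisfies $\ell(x_1)=r$, $\ell(x_1^{\dagger})=s$, and $x_1\cdot\nu=\tfrac{r-s}{2a_n}>0$, so $x_1\in\inte H^+$. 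For $X={\mathcal{M}}(\R^n)$ I would simply take $f=\ell$, which is linear, so $Tf$ is continuous by hypothesis. For $X={\mathcal{M}}_+(\R^n)$, ${\mathcal{S}}(\R^n)$, or ${\mathcal{V}}(\R^n)$ I would instead take $f$ to be a piecewise linear continuous function in $X$ coinciding with $\ell$ (suitably truncated to stay nonnegative when $D=[0,\infty)$) on a large $H$-symmetric box centred on $H$ and large enough to contain $x_1$ in its interior, and tapering off outside the box so that its super-level sets above $\essinf f$ are bounded; then $f\in X$ and again $Tf$ is continuous by hypothesis.

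The key step is then routine. On a neighbourhood $V\subset\inte H^+$ of $x_1$ contained in the interior of the box, $f$ agrees with $\ell$ and, since the box is $H$-symmetric, $f^{\dagger}$ agrees with $\ell^{\dagger}$; hence by \eqref{polF}, $Tf(x)=F^+\bigl(\ell(x),\ell(x^{\dagger})\bigr)$ for $x\in V$. Writing $p=a'\cdot x$ and $q=x\cdot\nu$, the map $\phi(x)=\bigl(\ell(x),\ell(x^{\dagger})\bigr)=(p+a_nq,\,p-a_nq)$ is the composition of the surjective linear map $x\mapsto(p,q)$ (surjective because $a'\ne o$) with the linear isomorphism $(p,q)\mapsto(p+a_nq,p-a_nq)$, so $\phi$ is open and has the explicit continuous right inverse
\[
\sigma(\xi,\eta)=\frac{\xi+\eta}{2\|a'\|^2}\,a'+\frac{\xi-\eta}{2a_n}\,\nu ,
\]
with $\sigma(r,s)=x_1$ and $\sigma(\xi,\eta)\in V$ for $(\xi,\eta)$ near $(r,s)$. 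Therefore $F^+(\xi,\eta)=Tf\bigl(\sigma(\xi,\eta)\bigr)$ for all $(\xi,\eta)\in D^2$ close to $(r,s)$, exhibiting $F^+$ near $(r,s)$ as a composition of continuous maps. Since $(r,s)$ was an arbitrary off-diagonal point, $F^+$ — and hence $F^-$ — is continuous on $D^2$.

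The one genuinely delicate point, and the place where care is needed, is the construction (for $X\ne{\mathcal{M}}(\R^n)$) of the piecewise linear continuous member $f$ of $X$ that coincides with the affine function $\ell$ on the prescribed $H$-symmetric box: one must use a polytopal norm in forming the box and the taper so that $f$ stays piecewise linear, observe that replacing $\ell$ by $\max\{\ell,0\}$ (which is still piecewise linear) keeps $f\ge0$ when $D=[0,\infty)$ — and then note that on $V$ and $V^{\dagger}$ this truncation is invisible unless $r=0$ or $s=0$, in which case it only replaces $\ell$ by $\max\{\ell,0\}$ in the arguments of $F^+$ along $\sigma$, which is harmless since those arguments lie in $D^2$ anyway — and finally verify that the tapered extension has bounded super-level sets above $\essinf f$, so that $f\in{\mathcal{V}}(\R^n)\subset{\mathcal{S}}(\R^n)$ and $f\in{\mathcal{M}}_+(\R^n)$ as needed.
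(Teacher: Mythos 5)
Your proof is correct and follows the same basic route as the paper's: apply $T$ to a (piecewise) linear function in $X$, read off $F^+$ at a point of $\inte H^+$ via \eqref{polF}, and transfer the continuity of $Tf$ to $F^+$ by locally inverting the map $x\mapsto(f(x),f^\dagger(x))$. You add two small streamlinings that the paper does not use: the identity $F^-(u,v)=u+v-F^+(v,u)$ obtained from \eqref{Fvalues} eliminates the separate argument for $F^-$, and the diagonal is disposed of directly from \eqref{Fvalues} rather than by pasting together the closed pieces $E_1=\{r\ge s\}$ and $E_2=\{r\le s\}$. Your choice of a different linear $\ell$ (with $\operatorname{sign}a_n=\operatorname{sign}(r-s)$) at each off-diagonal point takes the place of the paper's two fixed maps $x\mapsto x\cdot(e_1\pm e_n)$; both accomplish the same thing.

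One loose end in the last paragraph, though it is easily patched: for $X={\mathcal{S}}(\R^n)$, where $D=\R$, the function $\ell$ (untruncated, as you correctly do not truncate when $D=\R$) may be negative on the box, so tapering to $0$ outside would produce a function whose super-level sets $\{f>t\}$ for $\essinf f<t<0$ have infinite measure, and the resulting $f$ would not lie in ${\mathcal{S}}(\R^n)$. Instead taper down to a constant $\le\inf_{\mathrm{box}}\ell$, which does put $f$ in ${\mathcal{S}}(\R^n)$ (though of course not in ${\mathcal{V}}(\R^n)$, contrary to what your final sentence suggests in that case). The paper commits essentially the same imprecision in this case by using $|x\cdot(e_1+e_n)|$ for all of ${\mathcal{M}}_+(\R^n)$, ${\mathcal{S}}(\R^n)$, ${\mathcal{V}}(\R^n)$ at once, so you are in good company.
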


\begin{proof}
Assume first that $X={\mathcal{M}}(\R^n)$.  Since $T$ is equimeasurable, \eqref{Fvalues} holds, implying that $F^\pm(r,r)=r$ for $r\in D$.  Suppose that $T$ maps linear functions to continuous functions. We may assume that $H=e_n^{\perp}$ and define
$f(x)=x\cdot(e_1+e_n)$ for $x\in \R^n$.  Then $f$ is linear and if $x=(x_1,\dots,x_n)$, then $f(x)=x_1+x_n$ and $f^{\dagger}(x)=x_1-x_n$.  Thus for $x\in H^+$, we have $Tf(x)=F^+(f(x),f^{\dagger}(x))=F^+(x_1+x_n,x_1-x_n)$.  Let $x_{rs}=((r+s)/2,0,\dots,0,(r-s)/2)$ and note that $x_{rs}\in H^+$ if and only if $(r,s)\in E_1=\{(r,s)\in D^2: r\ge s\}$.  Consequently,
\begin{align}\label{eq:linFct}
(r,s)\mapsto Tf(x_{rs})=F^+(r,s)
\end{align}
is continuous on $E_1$. A similar argument using the linear function $f(x)=x\cdot(e_1-e_n)$ shows that $F^+$ is continuous on $E_2=\{(r,s)\in D^2: r\le s\}$.  It follows that $F^+$ is continuous on $D^2$, and we arrive at the same conclusion for $F^-$ similarly.

Suppose that $X={\mathcal{M}}_+(\R^n)$, ${\mathcal{S}}(\R^n)$, or $X={\mathcal{V}}(\R^n)$. We adopt the notation of the first part of this proof. For $(r_0,s_0)\in E_1$ we can choose a nonnegative piecewise linear continuous function $f$ coinciding with $x\mapsto |x\cdot(e_1+ e_n)|\ge 0$ on the ball $tB^n$ with $t>{2}^{-1/2}\|(r_0,s_0)\|$ and vanishing outside an even larger ball. Clearly $f\in X$.  Following the arguments above, it can be seen that the restriction of $(r,s)\mapsto F^+(r,s)$ to $E_1$ is continuous at $(r_0,s_0)$. Similar arguments for $(r_0,s_0)\in E_2$ and for $F^-$ lead to the desired conclusion.
\end{proof}

Let $1\le p\le \infty$.  A function $F:D^2\to \R^2$ is \emph{$l^p_2$-contracting} if
\begin{equation*}
\| F(r,s)-F(r',s')\|_p\le \| (r,s)-(r',s')\|_p
\end{equation*}
for $(r,s),(r',s')\in D^2$, where $\|\cdot\|_p$ is the norm in $l^p_2$.
For instance, the function
\begin{equation}\label{eqLast}
F(r,s)=(M_{\infty}(r,s),M_{-\infty}(s,r)),
\end{equation}
with the associated functions of the polarization operation as components, is $l^p_2$-contracting. In fact, this can be checked directly for $p=\infty$.  For $1\le p<\infty$, it follows from the inequality
$$|r-s'|^p+|s-r'|^p\le |r-r'|^p+|s-s'|^p,$$
where $r\le s$ and $s'\le r'$, which is in turn a consequence of the convexity of the function $|t|^p$, $p\ge 1$.  Indeed, as in \cite[p.~43]{VSPhD}, the latter implies that if $a\in \R$ and $b,c\ge 0$, then
$$|a|^p-|a-b|^p\le |a+c|^p-|a-b+c|^p,$$
so the required inequality results from setting $a=s-r'$, $b=s-r$, and $c=r'-s'$.

\begin{lem}\label{lemLp}
Let $H\in {{\mathcal{G}}(n,n-1)}$ be oriented, let $X={\mathcal{M}}(\R^n)$, ${\mathcal{M}}_+(\R^n)$, ${\mathcal{S}}(\R^n)$, or $\cV(\R^n)$, and let $1\le p\le \infty$. Suppose that $T:X\to X$ is pointwise with respect to $H$ with associated functions $F^+$ and $F^-$.  Then

\noindent{\rm{(i)}}  $T$ is $L^p$-contracting if and only if \begin{equation}\label{mar61}
F(s,t)=(F^+(s,t),F^-(t,s)),~~\quad(s,t)\in D^2,
\end{equation}
is $l^p_2$-contracting, and

\noindent{\rm{(ii)}} if $T$ is $L^p$-contracting, then $F^+$ and $F^-$ are Lipschitz on $D^2$ with Lipschitz constant $\sqrt2$.
\end{lem}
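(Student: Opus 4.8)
The plan is to use the pointwise structure of $T$ to reduce everything to the planar map $F$ of \eqref{mar61}, where $F(s,t)=(F^+(s,t),F^-(t,s))$. The key observation is that, since $f^{\dagger}(x)=f(x^{\dagger})$, for every $x\in\inte H^+$ (so that $x^{\dagger}\in\inte H^-$) formula \eqref{polF} yields
\[
\bigl(Tf(x),Tf(x^{\dagger})\bigr)=\bigl(F^+(f(x),f(x^{\dagger})),\,F^-(f(x^{\dagger}),f(x))\bigr)=F\bigl(f(x),f(x^{\dagger})\bigr).
\]
Since $H$ is $\cH^n$-null and the pairs $\{x,x^{\dagger}\}$ fill out $\R^n$ as $x$ runs over $\inte H^+$, and reflection in $H$ preserves $\cH^n$, this identity transfers the $L^p$-geometry of $T$ to the $l^p_2$-geometry of $F$. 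Concretely, for the ``if'' part of (i) I would take $f,g\in X$ with $f-g\in L^p(\R^n)$ and, for $1\le p<\infty$, apply the $l^p_2$-contraction of $F$ pointwise to get
\[
|Tf(x)-Tg(x)|^p+|Tf(x^{\dagger})-Tg(x^{\dagger})|^p\le|f(x)-g(x)|^p+|f(x^{\dagger})-g(x^{\dagger})|^p
\]
for $\cH^n$-a.e.\ $x\in\inte H^+$, then integrate over $\inte H^+$ to obtain $\|Tf-Tg\|_p\le\|f-g\|_p$; the case $p=\infty$ is identical with maxima and suprema in place of $p$-th powers and integrals.

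For the ``only if'' part of (i), given $(r,s),(r',s')\in D^2$ I would fix a compact set $A\subset\inte H^+$ with $0<\cH^n(A)<\infty$ (so $A^{\dagger}\subset\inte H^-$ is disjoint from $A$) and test $T$ against $f=r\,1_A+s\,1_{A^{\dagger}}+c\,1_{\R^n\setminus(A\cup A^{\dagger})}$ and $g=r'\,1_A+s'\,1_{A^{\dagger}}+c\,1_{\R^n\setminus(A\cup A^{\dagger})}$, with $c\in D$ a constant taken to be $0$ when $D=[0,\infty)$. As in the proof of Theorem~\ref{thmm1}, $f,g\in X$, and $f-g=(r-r')1_A+(s-s')1_{A^{\dagger}}$ has bounded support, hence lies in $L^p(\R^n)$. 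Since $A\cup A^{\dagger}$ is $H$-symmetric and $f=g$ off it, \eqref{polF} forces $Tf=Tg$ off $A\cup A^{\dagger}$, while the identity above gives $Tf=F^+(r,s)$, $Tg=F^+(r',s')$ on $A$ and $Tf=F^-(s,r)$, $Tg=F^-(s',r')$ on $A^{\dagger}$. Hence $\|Tf-Tg\|_p^p=\cH^n(A)\,\|F(r,s)-F(r',s')\|_p^p$ and $\|f-g\|_p^p=\cH^n(A)\,\|(r,s)-(r',s')\|_p^p$ (with the evident modification for $p=\infty$), so $L^p$-contractivity of $T$ forces the $l^p_2$-contraction of $F$. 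The one point requiring care is checking $f,g\in X$ in all four cases $X={\mathcal{M}}(\R^n),{\mathcal{M}}_+(\R^n),{\mathcal{S}}(\R^n),\cV(\R^n)$, but this is exactly what is done in Theorem~\ref{thmm1} (the forced choice $c=0$ for the two classes with $D=[0,\infty)$ being the only subtlety), so I do not anticipate a genuine obstacle.

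For (ii), once (i) gives that $F$ is $l^p_2$-contracting, I would estimate, for $(s,t),(s',t')\in D^2$,
\[
|F^+(s,t)-F^+(s',t')|\le\|F(s,t)-F(s',t')\|_p\le\|(s,t)-(s',t')\|_p\le\|(s,t)-(s',t')\|_1\le\sqrt2\,\|(s,t)-(s',t')\|_2,
\]
using that $\|v\|_p\le\|v\|_1$ on $\R^2$ for every $p\in[1,\infty]$ together with the Cauchy--Schwarz inequality, and the same bound with the roles of the two components of $F$ interchanged for $F^-$; this gives the $\sqrt2$-Lipschitz property of $F^+$ and $F^-$ on $D^2$.
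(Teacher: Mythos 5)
Your proof is correct and follows essentially the same route as the paper's: for the ``if'' direction, split $\R^n$ into $H^+$ and its reflection, use the pointwise formula to reduce the $L^p$ estimate to an $l^p_2$ estimate on the pair $(f(x),f^\dagger(x))$, and integrate; for the ``only if'' direction, test against step functions supported on an $H$-symmetric bounded set; part~(ii) is then the elementary chain of norm inequalities on $\R^2$. The one place you should be slightly more careful is the choice of the background constant $c$ when $X={\mathcal{S}}(\R^n)$: since you need \emph{both} $f$ and $g$ in $X$ simultaneously, it does not suffice to choose $c$ depending only on $r,s$ (as in the single-function argument of Theorem~\ref{thmm1}); you need $c\le\min\{r,s,r',s'\}$. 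The paper handles this by taking $c=\min\{0,r_1,r_2,s_1,s_2\}$, using $A=B^n\cap H^+$. This is a trivial adjustment and your reference to Theorem~\ref{thmm1} suggests you had the right idea, but as written the phrase ``a constant $c\in D$'' leaves the gap.
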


\begin{proof}
(i)  We first show that if $F$ is $l^p_2$-contracting, then $T$ is $L^p$-contracting.  Let $f_1, f_2\in X$.  Suppose that $p<\infty$.  As $T$ is pointwise, we have
\begin{align*}
\|Tf_1-Tf_2\|_p^p&=\int_{\R^n}|Tf_1(x)-Tf_2(x)|^p\,dx \\&=\int_{H^+}\big|F^+\big(f_1(x),f_1^\dagger(x)\big)-F^+
\big(f_2(x),f_2^\dagger(x)\big)\big|^pdx\\&\quad +\int_{H^-}\big|F^-\big(f_1(x),f_1^\dagger(x)\big)-
F^-\big(f_2(x),f_2^\dagger(x)\big)\big|^p\,dx.
\end{align*}
Substituting $x$ by $x^\dagger$ in the second integral and using the definition of $F$ in (\ref{mar61}), we obtain \begin{align}\label{eqCOntr1}
\|Tf_1-Tf_2\|_p^p=\int_{H^+}\big\|F\big(f_1(x),f_1^\dagger(x)\big)-
F\big(f_2(x),f_2^\dagger(x)\big)\big\|_p^p\,dx.
\end{align}
In a similar fashion, it is easily seen that
\begin{align}\label{eqCOntr2}
\|f_1-f_2\|_p^p=\int_{H^+}\big\|\big(f_1(x),f_1^\dagger(x)\big)-
\big(f_2(x),f_2^\dagger(x)\big)\big\|_p^p\,dx.
\end{align}
Now if $F$ is $l^p_2$-contracting, \eqref{eqCOntr1} is bounded from above by \eqref{eqCOntr2}, so $T$ is  $L^p$-contracting.

The case when $p=\infty$ follows similarly from the equations
$$
\|Tf_1-Tf_2\|_\infty=\esssup_{x\in H^+}\big\|F\big(f_1(x),f_1^\dagger(x)\big)-F\big(f_2(x),f_2^\dagger(x)\big)
\big\|_\infty
$$
and
$$
\|f_1-f_2\|_\infty=\esssup_{x\in H^+}\big\|\big(f_1(x),f_1^\dagger(x)\big)-\big(f_2(x),f_2^\dagger(x)\big)
\big\|_\infty.
$$

To show the other direction, let $r_1,s_1,r_2,s_2\in D$ and define $f_i\in X$, $i=1,2$, by \eqref{eqExFct} with the term $\min\{0,r_i,s_i\}$ replaced by $\min\{0,r_1,r_2,s_1,s_2\}$.  As $T$ is  $L^p$-contracting, we get
$$
\frac{\kappa_n}{2}\|F(r_1,s_1)-F(r_2,s_2)\|_p^p\le
	\|Tf_1-Tf_2\|_p^p\le \|f_1-f_2\|_p^p=\frac{\kappa_n}{2}\|(r_1,s_1)-(r_2,s_2)\|_p^p,
$$
so $F$ is $l^p_2$-contracting.

(ii) Suppose that $T$ is $L^p$-contracting.  Then $F$ is $l^p_2$-contracting by (i), so
\begin{align*}
|F^+(r_1,s_1)-F^+(r_2,s_2)|&\le
\|F(r_1,s_1)-F(r_2,s_2)\|_p\le \|(r_1,s_1)-(r_2,s_2)\|_p\\&\le \sqrt 2\|(r_1,s_1)-(r_2,s_2)\|,
\end{align*}
as $\|\cdot\|_p\le \|\cdot\|_1\le \sqrt2 \|\cdot\|$. Hence $F^+$ is Lipschitz on $D^2$ with Lipschitz constant $\sqrt{2}$, and the same argument can be applied to $F^-$.
\end{proof}

\begin{lem}\label{lemcorm5}
Let $H\in {{\mathcal{G}}(n,n-1)}$ be oriented, let $X={\mathcal{M}}(\R^n)$, ${\mathcal{M}}_+(\R^n)$, ${\mathcal{S}}(\R^n)$, or $\cV(\R^n)$, and suppose that $T:X\to X$ is pointwise with respect to $H$ with associated functions $F^+$ and $F^-$.  If $T$ reduces the modulus of continuity, then $T$ is $L^\infty$-contracting.
\end{lem}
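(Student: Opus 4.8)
The plan is to reduce, via Lemma~\ref{lemLp}(i), to showing that the map $F(r,s)=(F^+(r,s),F^-(s,r))$ on $D^2$ is $l^\infty_2$-contracting: since $T$ is pointwise with respect to $H$, that lemma says this is equivalent to $T$ being $L^\infty$-contracting. So I would fix $(r_1,s_1),(r_2,s_2)\in D^2$, write $M=\|(r_1,s_1)-(r_2,s_2)\|_\infty=\max\{|r_1-r_2|,|s_1-s_2|\}$, and prove the two inequalities $|F^+(r_1,s_1)-F^+(r_2,s_2)|\le M$ and $|F^-(s_1,r_1)-F^-(s_2,r_2)|\le M$; together these give $\|F(r_1,s_1)-F(r_2,s_2)\|_\infty\le M$.

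Both inequalities would come from a single test function, using only the case $d=1$ of the hypothesis, namely $\omega_1(Tf)\le\omega_1(f)$. For each $\ee>0$ I would construct $f\in X$ together with small balls $B_1,B_2\subset\inte H^+$ and their reflections $C_i=B_i^\dagger\subset\inte H^-$, arranged so that every point of $B_1$ is within distance $1$ of every point of $B_2$ (hence also every point of $C_1$ within $1$ of every point of $C_2$), with $f\equiv r_i$ on $B_i$, $f\equiv s_i$ on $C_i$, and $\omega_1(f)\le M+\ee$. Assuming $H=e_n^\perp$ and $H^+=\{x_n\ge0\}$, fix a small $\rho\le1/8$ and put $B_1=B((0,\dots,0,L+\rho),\rho)$, $B_2=B((1/2,0,\dots,0,L+\rho),\rho)$ for a large $L$; set $V=\max\{r_1,r_2,s_1,s_2\}$, and let $c_0=0$ if $D=[0,\infty)$ and $c_0=\min\{r_1,r_2,s_1,s_2\}$ if $D=\R$, so that $c_0\le\min\{r_1,r_2,s_1,s_2\}$ in either case. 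Choose continuous non-decreasing $\psi:\R\to[0,1]$ with $\psi=0$ on $(-\infty,\rho]$ and $\psi=1$ on $[1/2-\rho,\infty)$; a continuous non-decreasing $\chi:[0,\infty)\to[0,1]$ with $\chi=0$ on $[0,2]$, $\chi=1$ on $[L,\infty)$, and Lipschitz constant at most $\ee/(2(V-c_0+1))$; and a continuous non-decreasing $\gamma:[0,\infty)\to[0,\infty)$ with $\gamma=0$ on $[0,R]$ (where $R$ is large enough that the four balls lie in $B(o,R)$), Lipschitz constant at most $\ee/2$, and $\gamma\ge V-c_0$ eventually. With $r(t)=r_1+(r_2-r_1)\psi(t)$ and $s(t)=s_1+(s_2-s_1)\psi(t)$, I would set
\[
g(x)=\begin{cases}c_0+\chi(x_n)\,(r(x_1)-c_0),&\text{if }x_n\ge0,\\ c_0+\chi(-x_n)\,(s(x_1)-c_0),&\text{if }x_n<0,\end{cases}\qquad f(x)=\max\bigl\{c_0,\,g(x)-\gamma(\|x\|)\bigr\}.
\]
Then $f$ is continuous with range in $[c_0,V]\subseteq D$; it equals $r_i$ on $B_i$ and $s_i$ on $C_i$ because there $\chi\equiv1$, $\gamma\equiv0$, and $\psi$ is identically $0$ on the $x_1$-range of $B_1,C_1$ and identically $1$ on that of $B_2,C_2$; and $f\equiv c_0$ outside a bounded set, so $f$ belongs to whichever of the four classes $X$ is. Since $T$ is pointwise with respect to $H$, and since $f^\dagger\equiv s_i$ on $B_i$ and $f^\dagger\equiv r_i$ on $C_i$ (as $C_i=B_i^\dagger$), we get $Tf\equiv F^+(r_i,s_i)$ on $B_i$ and $Tf\equiv F^-(s_i,r_i)$ on $C_i$.

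The decisive step is the estimate $\omega_1(f)\le M+\ee$. Let $\|x-y\|\le1$. If $x_n,y_n$ have opposite signs, then $|x_n|,|y_n|\le1\le2$, so $\chi$ annihilates both terms and $g(x)=g(y)=c_0$. If $x_n,y_n\ge0$, then $g(x)-g(y)=\chi(x_n)(r(x_1)-r(y_1))+(\chi(x_n)-\chi(y_n))(r(y_1)-c_0)$, and since $|r(x_1)-r(y_1)|=|r_1-r_2|\,|\psi(x_1)-\psi(y_1)|\le|r_1-r_2|\le M$, $|\chi(x_n)-\chi(y_n)|\le\ee/(2(V-c_0+1))$, and $|r(y_1)-c_0|\le V-c_0$, one gets $|g(x)-g(y)|\le M+\ee/2$; the case $x_n,y_n\le0$ is identical. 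As $a\mapsto\max\{c_0,a\}$ is $1$-Lipschitz and $\gamma\circ\|\cdot\|$ is Lipschitz with constant $\le\ee/2$, it follows that $|f(x)-f(y)|\le|g(x)-g(y)|+\ee/2\le M+\ee$, so $\omega_1(f)\le M+\ee$. Because $B_1,B_2$ have positive $\cH^n$-measure, lie within distance $1$, and $Tf$ is constant on each, the hypothesis forces $|F^+(r_1,s_1)-F^+(r_2,s_2)|\le\omega_1(Tf)\le\omega_1(f)\le M+\ee$; the same argument with $C_1,C_2$ gives $|F^-(s_1,r_1)-F^-(s_2,r_2)|\le M+\ee$. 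Letting $\ee\to0$ shows $F$ is $l^\infty_2$-contracting, and hence $T$ is $L^\infty$-contracting by Lemma~\ref{lemLp}(i).

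I expect the only genuine difficulty to be the bound $\omega_1(f)\le M+\ee$, i.e.\ the calibration of the construction: $f$ must be shaped so that the only oscillations over unit distances of size close to $M$ come from the horizontal ramps $r(\cdot),s(\cdot)$, which are controlled by $|r_1-r_2|$ and $|s_1-s_2|$, whereas the ``vertical'' fade to $c_0$ across $H$ and the ``radial'' fade to $c_0$ at infinity---each of total oscillation $V-c_0$, possibly much larger than $M$---are spread over distances large enough (large $L$, slowly growing $\gamma$) to contribute at most $\ee$.
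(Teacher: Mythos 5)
Your proof is correct and follows the same strategy as the paper's: reduce via Lemma~\ref{lemLp}(i) to showing that $F(r,s)=(F^+(r,s),F^-(s,r))$ is $l^\infty_2$-contracting, and obtain the two coordinate inequalities by testing the modulus-of-continuity hypothesis on a function whose values near a pair of reflected point-pairs encode $(r_1,s_1)$ and $(r_2,s_2)$. The paper simply asserts the existence of a test $f$ with $\omega_d(f)$ exactly equal to $\|(r_1,s_1)-(r_2,s_2)\|_\infty$; you instead build $f$ explicitly, constant on small balls so that the essential supremum in \eqref{omg} genuinely detects the relevant values of $Tf$, and settle for $\omega_1(f)\le M+\ee$ with $\ee\to0$, which also makes membership of $f$ in $\mathcal{S}(\R^n)$ or $\cV(\R^n)$ transparent --- a somewhat more careful rendering of the same idea.
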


\begin{proof}
Suppose that $T$ reduces the modulus of continuity and let $r,s,r',s'\in D$.
Choose $f\in X$ and $x,y\in H^+$ such that $f(x)=r$, $f(y)=r'$, $f(x^{\dagger})=s$, $f(y^{\dagger})=s'$, and
$$
\omega_d(f)=\|(f(x),f(x^\dagger))-(f(y),f(y^\dagger))\|_\infty=
\|(r,s)-(r',s')\|_\infty,
$$
where $d=\|x-y\|$. As $T$ reduces the modulus of continuity,
\begin{equation}\label{mar62}
|F^+(r,s)-F^+(r',s')|=|Tf(x)-Tf(y)|\le \omega_d(Tf)\le \omega_d(f)=\|(r,s)-(r',s')\|_\infty.
\end{equation}
A similar relation for $|Tf(x^\dagger)-Tf(y^\dagger)|$ yields
\begin{equation}\label{mar63}
|F^-(s,r)-F^-(s',r')|\le \|(r,s)-(r',s')\|_\infty.
\end{equation}
From (\ref{mar62}) and (\ref{mar63}), we conclude that $F$ (defined by (\ref{mar61})) is $l^\infty_2$-contracting and the result follows from Lemma~\ref{lemLp}(i) with $p=\infty$.
\end{proof}

Summarizing, we have the following set of characterizations.

\begin{thm}\label{thmport}
Let $H\in {{\mathcal{G}}(n,n-1)}$ be oriented, let $X={\mathcal{M}}(\R^n)$ (or ${\mathcal{M}}_+(\R^n)$, ${\mathcal{S}}(\R^n)$, or $\cV(\R^n)$), and suppose that $T:X\to X$ is pointwise with respect to $H$ and equimeasurable.  The following statements are equivalent.

\noindent{\rm{(i)}} The associated functions $F^+$ and $F^-$ are continuous on $D^2$.

\noindent{\rm{(ii)}} $T$ is monotonic.

\noindent{\rm{(iii)}} $T$ is a rearrangement.

\noindent{\rm{(iv)}}  $T$ maps linear functions (or piecewise linear continuous functions, respectively) to continuous functions.

\noindent{\rm{(v)}} $T$ is $L^p$-contracting for some (or, equivalently, for all) $1\le p\le\infty$.

\noindent{\rm{(vi)}} $T$ reduces the modulus of continuity.

\noindent{\rm{(vii)}} $T=\Id$, $T=\dagger$, $T=P_H$, or $T=P_H^{\dagger}$.
\end{thm}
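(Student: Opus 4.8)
The plan is to run the equivalences as a single wheel anchored at statement~(vii), exploiting the standing hypotheses that $T$ is pointwise with respect to $H$ and equimeasurable, so that almost every arrow is a result already proved in this section. Concretely, I would show that $\mathrm{(vii)}$ implies each of $\mathrm{(i)}$--$\mathrm{(vi)}$, and that each of $\mathrm{(i)}$--$\mathrm{(vi)}$ implies $\mathrm{(vii)}$.

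For the forward direction one just checks the six properties for the four named maps. The associated functions of $\Id$, $\dagger$, $P_H$, and $P_H^{\dagger}$ are the coordinate projections $(r,s)\mapsto r$, $(r,s)\mapsto s$, the pair $(M_\infty,M_{-\infty})$, and the pair $(M_{-\infty},M_\infty)$, all continuous on $D^2$, so $\mathrm{(i)}$ holds; each of the four maps is monotonic (for $P_H$ and $P_H^{\dagger}$ by monotonicity of $\max$ and $\min$), which with the standing equimeasurability gives $\mathrm{(ii)}$ and hence $\mathrm{(iii)}$; each sends continuous functions to continuous functions, and in particular linear or piecewise linear continuous functions to continuous ones (for $P_H$, on each open half-space the map is $\max\{f,f^{\dagger}\}$ or $\min\{f,f^{\dagger}\}$, the two pieces matching continuously along $H$), giving $\mathrm{(iv)}$; $\Id$ and $\dagger$ are $L^p$-isometries for every $p$ (reflection in $H$ being a measure-preserving transformation of $\R^n$), while for $P_H$ the function $F$ of~\eqref{mar61} coincides with the function~\eqref{eqLast}, which is $l^p_2$-contracting for all $1\le p\le\infty$ by the computation preceding Lemma~\ref{lemLp}, so that $P_H$ is $L^p$-contracting by Lemma~\ref{lemLp}(i) and $P_H^{\dagger}=\dagger\circ P_H$ is too, giving $\mathrm{(v)}$; and $\Id$, $\dagger$ preserve $\omega_d$ (again because reflection in $H$ is an isometry of $\R^n$), while $P_H$ reduces $\omega_d$ by \cite[Proposition~1.37]{Baer}, whence $P_H^{\dagger}=\dagger\circ P_H$ does as well, giving $\mathrm{(vi)}$.

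For the reverse direction: $\mathrm{(i)}\Rightarrow\mathrm{(vii)}$ is Lemma~\ref{lemsept16}; $\mathrm{(ii)}$ and $\mathrm{(iii)}$ are the same statement given that $T$ is equimeasurable, and $\mathrm{(iii)}\Rightarrow\mathrm{(i)}$ is Lemma~\ref{thmm4}(ii); $\mathrm{(iv)}\Rightarrow\mathrm{(i)}$ is Lemma~\ref{lemcorm3}; $\mathrm{(v)}\Rightarrow\mathrm{(i)}$ is Lemma~\ref{lemLp}(ii), applied to whichever value of $p$ is available (which also explains why the ``some'' versus ``all $p$'' parenthetical in~$\mathrm{(v)}$ is a byproduct of the completed wheel); and $\mathrm{(vi)}\Rightarrow\mathrm{(v)}$ is Lemma~\ref{lemcorm5} with $p=\infty$. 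Chaining these, each of $\mathrm{(i)}$--$\mathrm{(vi)}$ implies $\mathrm{(i)}$, which implies $\mathrm{(vii)}$, which implies all of them, so the seven statements are equivalent.

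I do not anticipate a real obstacle: the substantive new content here, namely that polarization is $L^p$-contracting, is already packaged in the material preceding Lemma~\ref{lemLp}, and everything else is assembly. The only place calling for slight care is a self-contained proof of $\mathrm{(vii)}\Rightarrow\mathrm{(vi)}$ if one wishes to avoid citing~\cite{Baer}: for $x\in H^+$ and $y\in H^-$ with $\|x-y\|\le d$ one must note in addition that $\|x-y^{\dagger}\|\le\|x-y\|$ and $\|x^{\dagger}-y\|\le\|x-y\|$, so that $|P_Hf(x)-P_Hf(y)|$, which equals one of $|f(x)-f(y)|$, $|f(x)-f(y^{\dagger})|$, $|f(x^{\dagger})-f(y)|$, $|f(x^{\dagger})-f(y^{\dagger})|$, is in every case at most $\omega_d(f)$; the same-side pairs are immediate since $\max$ and $\min$ are $1$-Lipschitz for $\|\cdot\|_\infty$.
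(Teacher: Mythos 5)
Your proof is correct and follows essentially the same route as the paper: verify that (vii) yields (i)--(vi), then close the cycle via Lemma~\ref{lemsept16} for (i)$\Rightarrow$(vii), Lemma~\ref{thmm4}(ii) for (iii)$\Rightarrow$(i), Lemma~\ref{lemcorm3} for (iv)$\Rightarrow$(i), Lemma~\ref{lemLp}(ii) for (v)$\Rightarrow$(i), and Lemma~\ref{lemcorm5} for (vi)$\Rightarrow$(v). The only difference is cosmetic: you spell out the ``easy to check'' forward implications in more detail (including a self-contained proof of (vii)$\Rightarrow$(vi) that avoids citing \cite{Baer}), which the paper compresses.
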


\begin{proof}
It is easy to check that (vii) implies (i)--(iv) and (vi). To see that (vii)$\Rightarrow$(v), it is enough to apply Lemma~\ref{lemLp} with $T=P_H$, since the function $F$ in \eqref{eqLast} is $l^p_2$-contracting. Lemma~\ref{lemsept16} gives (i)$\Rightarrow$(vii) and  Lemmas~\ref{thmm4}(ii), \ref{lemcorm3}, and \ref{lemLp}(ii) show that (iii)$\Rightarrow$(i), (iv)$\Rightarrow$(i), and (v)$\Rightarrow$(i), respectively. That (ii)$\Rightarrow$(iii) follows from the definition of a rearrangement.  Finally, the implication (vi)$\Rightarrow$(v) follows from Lemma~\ref{lemcorm5} and the implications already established.
\end{proof}

\section{General maps between sets and between functions}\label{General sets}

The proof of the following result is essentially the same as that of \cite[Theorem~10.1(i)]{BGG}.  The statement is more general, since the map $\di$ need not be an $i$-symmetrization in the sense of \cite{BGG}, the sets concerned need not be compact, and invariance on $H$-symmetric cylinders is replaced by the weaker condition that $\di$ respects $H$-cylinders.

\begin{lem}\label{SteinerCompact}
Let $i\in \{1,\dots,n-1\}$ and let $H\in {\mathcal{G}}(n,i)$.  Suppose that $\di:{\mathcal{E}}\subset {\mathcal{L}}^n\rightarrow {\mathcal{L}}^n$ is monotonic, measure preserving, and respects $H$-cylinders.  Then
\begin{equation}\label{mainineq}
{\mathcal{H}}^{n-i}\left((\di K)\cap (H^{\perp}+x)\right)={\mathcal{H}}^{n-i}\left(K\cap (H^{\perp}+x)\right)
\end{equation}
for $K\in {\mathcal{E}}$ and ${\mathcal{H}}^i$-almost all $x\in H$.
\end{lem}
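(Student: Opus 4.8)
The plan is to use the three hypotheses to pin down the $(n-i)$-dimensional measure of each fibre of $\di K$ by a squeezing argument. First I would fix $K \in \mathcal{E}$ and, for $x \in H$, write $K_x = K \cap (H^\perp + x)$ for the fibre of $K$ over $x$ and $g_K(x) = \mathcal{H}^{n-i}(K_x)$ for the associated fibre-volume function on $H$. By Fubini's theorem, $g_K$ is $\mathcal{H}^i$-measurable and $\int_H g_K \, d\mathcal{H}^i = \mathcal{H}^n(K)$, and similarly for $\di K$. So the claim \eqref{mainineq} is equivalent to the statement that $g_{\di K} = g_K$ $\mathcal{H}^i$-almost everywhere on $H$; and since $\di$ is measure preserving we already know $\int_H g_{\di K}\, d\mathcal{H}^i = \int_H g_K\, d\mathcal{H}^i$. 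Hence it suffices to prove the \emph{one-sided} inequality $g_{\di K}(x) \le g_K(x)$ for $\mathcal{H}^i$-almost all $x$ (or the reverse); the matching integrals then force equality a.e.

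To get the one-sided inequality I would exploit that $\di$ respects $H$-cylinders together with monotonicity. The key observation is: for any $r > 0$ and any $y \in H$, if we intersect $K$ with the $H$-cylinder $C = (B(y,r) \cap H) + H^\perp$, then $K \cap C \subset C$, so by the ``respects $H$-cylinders'' property $\di(K \cap C) \subset C$ as well; and by monotonicity $\di(K \cap C) \subset \di K$, whence $\di(K \cap C) \subset (\di K) \cap C$. Now apply the measure-preserving property to $K \cap C$: this gives $\mathcal{H}^n\big((\di K) \cap C\big) \ge \mathcal{H}^n\big(\di(K\cap C)\big) = \mathcal{H}^n(K \cap C)$. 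Rewriting via Fubini over the base ball $B(y,r)\cap H$, this says
\begin{equation*}
\int_{B(y,r)\cap H} g_{\di K}\, d\mathcal{H}^i \;\ge\; \int_{B(y,r)\cap H} g_K\, d\mathcal{H}^i
\end{equation*}
for every ball $B(y,r)\cap H$ in $H$. Since this holds on an arbitrary ball (hence, by a standard monotone-class / Vitali-type argument, on an arbitrary $\mathcal{H}^i$-measurable subset of $H$ of finite measure), we conclude $g_{\di K} \ge g_K$ $\mathcal{H}^i$-a.e. Combined with the equality of the total integrals noted above, this yields $g_{\di K} = g_K$ a.e., which is exactly \eqref{mainineq}.

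A couple of technical points need care. First, to invoke ``measure preserving'' on $K \cap C$ one needs $K \cap C \in \mathcal{E}$; this is the kind of ``the class $\mathcal{E}$ is appropriate for the property concerned'' hypothesis built into the setup, but I would state explicitly that we use closure of $\mathcal{E}$ under intersection with $H$-cylinders (true when $\mathcal{E}$ is $\mathcal{C}^n$, $\mathcal{L}^n$, $\mathcal{K}^n_n$, etc.). Second, passing from ``$\int_B g_{\di K} \ge \int_B g_K$ for all balls $B$'' to ``$g_{\di K} \ge g_K$ a.e.'' requires knowing both functions are locally integrable on $H$, which follows since $\mathcal{H}^n(K), \mathcal{H}^n(\di K) < \infty$; then one applies the Lebesgue differentiation theorem on $H \cong \R^i$ to the locally finite signed measure $g_{\di K}\, d\mathcal{H}^i - g_K\, d\mathcal{H}^i$, whose value on every ball is $\ge 0$, to conclude its density is $\ge 0$ a.e. The main obstacle is really just making sure the ``respects $H$-cylinders'' property is applied to the right auxiliary set and that the Fubini slicing is set up cleanly; once the inclusion $\di(K \cap C) \subset (\di K) \cap C$ is in hand, the rest is routine measure theory. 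This is, as noted, essentially the argument of \cite[Theorem~10.1(i)]{BGG} with the cosmetic changes indicated before the statement.
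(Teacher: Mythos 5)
Your proof is correct and, as far as one can tell from the paper's remark that its proof "is essentially the same as that of \cite[Theorem~10.1(i)]{BGG}," it follows exactly the intended route: intersect $K$ with an $H$-cylinder over a small ball $B\subset H$, use monotonicity and the respects-$H$-cylinders property to obtain $\di(K\cap C)\subset(\di K)\cap C$, convert this via the measure-preserving property and Fubini into the one-sided inequality $\int_B g_{\di K}\ge\int_B g_K$ on every ball, then apply Lebesgue differentiation and the equality of total integrals. Your technical caveat about $\mathcal{E}$ being closed under intersection with $H$-cylinders is the right thing to flag (the paper's blanket "$\mathcal{E}$ is appropriate for the property concerned" is what covers this); the only small infelicity is the parenthetical suggesting the ball inequality extends to all measurable sets by a monotone-class argument \emph{before} the a.e.\ comparison — the clean order, which you do use in the subsequent paragraph, is balls $\Rightarrow$ a.e.\ pointwise inequality via Lebesgue differentiation $\Rightarrow$ equality a.e.\ from the equal totals.
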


In the following results, we always assume for convenience that ${\mathcal{K}}^n_n\subset{\mathcal{E}}$, even though this assumption can sometimes be weakened.

\begin{lem}\label{lemm5}
Let $H=u^{\perp}$, $u\in S^{n-1}$, let ${\mathcal{K}}^n_n\subset{\mathcal{E}}\subset {\mathcal{L}}^n$, and suppose that $\di:{\mathcal{E}}\rightarrow {\mathcal{L}}^n$ is monotonic, respects $H$-cylinders, and maps balls to balls.  Then there is a contraction (i.e., a Lipschitz function with Lipschitz constant $1$) $\varphi_{\di}:\R\to\R$ such that
\begin{equation}\label{ball}
\di B(x+tu,r)=B(x+\varphi_{\di}(t)u,r),
\end{equation}
essentially, for $r>0$ and $x\in H$.
\end{lem}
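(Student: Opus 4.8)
The plan is to pin down $\di B(x+tu,r)$ by feeding in the three hypotheses one at a time. Since $\di$ maps balls to balls, I would write $\di B(x+tu,r)=B(c,\rho)$ for suitable $c,\rho$; here and below equalities are understood essentially, which for closed balls is harmless, since a closed ball essentially contained in another is genuinely contained in it. Because $B(x+tu,r)$ sits inside the $H$-cylinder $(B(x,r)\cap H)+H^{\perp}$, the hypothesis that $\di$ respects $H$-cylinders gives $B(c,\rho)\subset(B(x,r)\cap H)+H^{\perp}$; as the orthogonal projection of $B(c,\rho)$ onto $H$ is the $(n-1)$-dimensional ball $B(c|H,\rho)\cap H$, this says exactly that $\|c|H-x\|+\rho\le r$, so in particular $\rho\le r$. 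The heart of the proof is the reverse inequality $\rho\ge r$, and I expect this to be the only genuinely delicate point.

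To get $\rho\ge r$, I would fix $\ee\in(0,r)$ and a unit vector $v\in H$ (this uses $n\ge 2$). The small ball $B\bigl(x+(r-\ee)v+tu,\ee\bigr)$ is internally tangent to, hence contained in, $B(x+tu,r)$, so by monotonicity its $\di$-image lies in $B(c,\rho)$; applying the first paragraph to this small ball, whose $H$-projection is centred at $x+(r-\ee)v$, its image is a ball whose $H$-projection lies within distance $\ee$ of $x+(r-\ee)v$. Hence $x+(r-\ee)v$ is within distance $\rho+\ee$ of $c|H$. Letting $v$ run over all unit vectors of $H$, the entire sphere of radius $r-\ee$ about $x$ in $H$ is contained in $B(c|H,\rho+\ee)\cap H$; choosing $v$ in the direction of $x-c|H$ then gives $\|x-c|H\|+(r-\ee)\le\rho+\ee$, so $\rho\ge r-2\ee$. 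Letting $\ee\to 0$ yields $\rho\ge r$, hence $\rho=r$, and then the inequality of the first paragraph forces $c|H=x$. Thus $\di B(x+tu,r)=B\bigl(x+\lambda(x,t,r)u,r\bigr)$, where $\lambda(x,t,r):=c\cdot u\in\R$.

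It remains to show $\lambda$ depends only on $t$ and is $1$-Lipschitz in $t$. For $x,x'\in H$ with $d=\|x-x'\|$, internal tangency gives $B(x+tu,r)\subset B(x'+tu,r+d)$, so by monotonicity $B\bigl(x+\lambda(x,t,r)u,r\bigr)\subset B\bigl(x'+\lambda(x',t,r+d)u,r+d\bigr)$; comparing radii and centres, the distance between the centres, which equals $\sqrt{d^2+(\lambda(x,t,r)-\lambda(x',t,r+d))^2}$, must be at most $d$, forcing $\lambda(x,t,r)=\lambda(x',t,r+d)$. Two applications of this identity (the second bringing the spatial point back to $x$) show $\lambda(x,t,\cdot)$ is constant, and a further application shows it is independent of $x$; denote the common value by $\varphi_{\di}(t)$. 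Finally, $B(tu,r)\subset B(t'u,r+|t-t'|)$ together with monotonicity gives $B(\varphi_{\di}(t)u,r)\subset B(\varphi_{\di}(t')u,r+|t-t'|)$, whence $|\varphi_{\di}(t)-\varphi_{\di}(t')|\le|t-t'|$; so $\varphi_{\di}$ is a contraction and $\di B(x+tu,r)=B(x+\varphi_{\di}(t)u,r)$, as required.
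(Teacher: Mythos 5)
Your proof is correct, and its global strategy coincides with the paper's: use the ball-to-ball and cylinder-respecting hypotheses to pin down $\di B(x+tu,r)=B(x+\lambda(x,t,r)u,r)$, then use monotonicity on well-chosen pairs of balls to show that $\lambda$ is independent of $x$ and $r$ and $1$-Lipschitz in $t$. The geometric sub-arguments differ at each step, however. To prove the image radius equals $r$, you argue quantitatively, placing balls of radius $\ee$ near the boundary of $B(x+tu,r)$ and letting $\ee\to 0$; the paper instead argues by contradiction, observing that if the image radius were smaller, one could fit a small ball inside $B(x+tu,r)$ whose $H$-projection is disjoint from that of the image, which is incompatible with monotonicity and cylinder-respecting. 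For the independence of $r$ and $x$, you work exclusively with nested (internally tangent) balls and iterate the resulting identity $\lambda(x,t,r)=\lambda(x',t,r+\|x-x'\|)$, whereas the paper circumscribes a pair of non-nested balls by a third ball internally tangent to both at their equators and reads off the conclusion from the position of the tangencies. For the Lipschitz estimate, you again nest balls $B(tu,r)\subset B(t'u,r+|t-t'|)$, while the paper applies $\di$ to the convex body $B(su,r)\cap B(tu,r)$ and derives a contradiction from the nonemptiness of its image. Your uniform reliance on nested balls is slightly more economical and keeps the whole proof within a single template; the paper's external-tangency and intersection arguments are perhaps more geometrically vivid. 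Both are fully rigorous, and both use only balls and a convex body, so all sets involved lie in $\mathcal{K}^n_n\subset\mathcal{E}$.
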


\begin{proof}
In the proof we ignore sets of ${\mathcal{H}}^n$-measure zero.  For $r>0$ and $x\in H$, let $C(x,r)$ be the infinite spherical cylinder of radius $r$ and axis $H^{\perp}+x$. Let $t\in \R$. By our assumptions, $\di B(x+tu,r)$ is a ball contained in $C(x,r)$.  We claim that the radius of $\di B(x+tu,r)$ is $r$.  Indeed, if it is less than $r$, there is a ball $B_0\subset B(x+tu,r)$ whose projection on $H$ is disjoint from that of $\di B(x+tu,r)$.  However, by the monotonicity of $\di$, $\di B_0\subset \di B(x+tu,r)$, contradicting the fact that $\di$ respects $H$-cylinders.  It follows that $\di B(x+tu,r)=B(x+t'u,r)$ for some $t'=t'(r,t,x)\in \R$.

Fix $t\in \R$ and suppose that neither of the balls $B(x_j+tu,r_j)$, $j=1,2$, contains the other.  Let $z\in H$ and $s\in \R$ be such that $B(z+tu,s)\supset B(x_1+tu,r_1)\cup B(x_2+tu,r_2)$ is tangent to both $B(x_1+tu,r_1)$ and $B(x_2+tu,r_2)$ at points in $\partial C(z,s)\cap (H+tu)$.  Then there are $t'=t'(s,t,z)$ and $t_j'=t'(r_j,t,x_j)$, $j=1,2$, with
\begin{align*}
B(z+t'u,s)&=\di B(z+tu,s)\supset \di B(x_1+tu,r_1)\cup \di B(x_2+tu,r_2)\\
&= B(x_1+t_1'u,r_1)\cup  B(x_2+t_2'u,r_2),
\end{align*}
where we used the monotonicity of $\di$.  It follows that $B(z+t'u,s)$ contains $B(x_1+t'_1u,r_1)$ and $B(x_2+t'_2u,r_2)$ and is tangent to both of them at points in $\partial C(z,s)\cap (H+t'u)$. This forces $t_1'=t'=t_2'$,  so
\begin{align}\label{eq13}
t'(r_1,t,x_1)=t'(r_2,t,x_2).
\end{align}
If one of $B(x_j+tu,r_j)$, $j=1,2$, is contained in the other, say $B(x_1+tu,r_1)\subset B(x_2+tu,r_2)$, choose $B(z+tu,s)$ disjoint from $B(x_2+tu,r_2)$. Then \eqref{eq13}, applied first to the disjoint balls  $B(x_1+tu,r_1)$ and $B(z+tu,s)$, and then to the disjoint balls  $B(x_2+tu,r_2)$ and $B(z+tu,s)$, yields
$$t'(r_1,t,x_1)=t'(s,t,z)=t'(r_2,t,x_2).$$
This shows that \eqref{eq13} holds generally, so $t'(r,t,x)$ is independent of $r$ and $x$. Then $\varphi_{\di}(t)=t'$ is the required function.

Suppose that there are $s,t\in \R$ and $\ee>0$ such that
$$|\varphi_{\di}(s)-\varphi_{\di}(t)|=|s-t|+\ee.$$
Let $|s-t|/2<r<(|s-t|+\ee)/2$.  If $K=B(su,r)\cap B(tu,r)$, then there is a ball $B_1\subset K$ and hence $\emptyset\neq\di B_1\subset \di K$.  On the other hand, since $K\in {\mathcal{E}}$, the monotonicity of $\di$ also implies that
$$\di K\subset \di B(su,r)\cap \di B(tu,r)=B(\varphi_{\di}(s)u,r)\cap B(\varphi_{\di}(t)u,r)=\emptyset.$$
This contradiction shows that $\varphi_{\di}$ is Lipschitz with Lipschitz constant 1.
\end{proof}

\begin{lem}\label{lemm6}
Let $H=u^{\perp}$, $u\in S^{n-1}$, let ${\mathcal{K}}^n_n\subset{\mathcal{E}}\subset {\mathcal{L}}^n$, and suppose that $\di:{\mathcal{E}}\rightarrow {\mathcal{L}}^n$ is monotonic, measure preserving, respects $H$-cylinders, and maps balls to balls.  Let $\varphi_{\di}:\R\to \R$ be the function from Lemma~\ref{lemm5}.  Then for each $H$-symmetric $K\in {\mathcal{K}}^n_n$ and $t\in \R$, we have
\begin{equation}\label{hsym}
\di(K+tu)=K+\varphi_{\di}(t)u,
\end{equation}
essentially.
\end{lem}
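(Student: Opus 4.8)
The plan is to sandwich $K+tu$ between balls, use Lemma~\ref{lemm5} to evaluate $\di$ on those balls, and exploit the $H$-symmetry of $K$ together with the fact that $\varphi_{\di}$ is $1$-Lipschitz; the measure-preserving property and Lemma~\ref{SteinerCompact} then force equality. All set relations below are understood modulo $\cH^n$-null sets, as in Lemma~\ref{lemm5}, and note that $K+tu\in\mathcal{K}^n_n\subseteq\mathcal{E}$, so $\di(K+tu)$ is defined.

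First I would record two facts obtained from $H$-symmetry, convexity, and the contraction property. (i) If $B=B(x+su,r)$ with $x\in H$ and $B\subseteq K+tu$, then $B(x+(s-t)u,r)\subseteq K$; reflecting in $H$ gives $B(x-(s-t)u,r)=B(x+(s-t)u,r)^{\dagger}\subseteq K^{\dagger}=K$, and by convexity $B(x+wu,r)=\lambda B(x+(s-t)u,r)+(1-\lambda)B(x-(s-t)u,r)\subseteq K$ for every $w$ with $|w|\le|s-t|$. Since $|\varphi_{\di}(s)-\varphi_{\di}(t)|\le|s-t|$, taking $w=\varphi_{\di}(s)-\varphi_{\di}(t)$ and applying Lemma~\ref{lemm5} yields $\di B=B(x+\varphi_{\di}(s)u,r)\subseteq K+\varphi_{\di}(t)u$. (ii) If instead $B=B(x+su,r)\supseteq K+tu$, then $B(x\pm(s-t)u,r)\supseteq K$, so $K$ lies in the ``lens'' $B(x+(s-t)u,r)\cap B(x-(s-t)u,r)$; since for fixed $\zeta\in\R$ the map $w\mapsto(\zeta-w)^2$ is convex, a point $x+\xi$ of this lens, with $\xi_H\in H$ and $\xi_u\in\R$, satisfies $\|\xi_H\|^2+(\xi_u-w)^2\le\max\{\|\xi_H\|^2+(\xi_u-(s-t))^2,\,\|\xi_H\|^2+(\xi_u+(s-t))^2\}\le r^2$ for $|w|\le|s-t|$, so the lens is contained in $B(x+wu,r)$; taking $w=\varphi_{\di}(s)-\varphi_{\di}(t)$ gives $K+\varphi_{\di}(t)u\subseteq\di B$. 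Combining (i) and (ii) with the monotonicity of $\di$, every ball $B\subseteq K+tu$ satisfies $\di B\subseteq\di(K+tu)\cap(K+\varphi_{\di}(t)u)$, and every ball $B\supseteq K+tu$ satisfies $\di(K+tu)\cup(K+\varphi_{\di}(t)u)\subseteq\di B$.

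To finish, apply Lemma~\ref{SteinerCompact} (valid since $\di$ is monotonic, measure preserving, and respects $H$-cylinders): for $\cH^{n-1}$-almost every $x\in H$ the fibre $\di(K+tu)\cap(H^{\perp}+x)$ has the same $\cH^1$-measure as $(K+tu)\cap(H^{\perp}+x)$, hence as the fibre of $K+\varphi_{\di}(t)u$ over $x$, which by the $H$-symmetry of $K$ is the interval $\{x+\sigma u:|\sigma-\varphi_{\di}(t)|\le L_x\}$, where $L_x$ is the half-length of the chord $K\cap(H^{\perp}+x)$. For $x$ in the interior of $K|H$ and $s$ in the open chord, a small ball $B(x+su,\varepsilon)\subseteq K+tu$ has $\di B=B(x+\varphi_{\di}(s)u,\varepsilon)\subseteq\di(K+tu)$, so the fibre of $\di(K+tu)$ over $x$ contains the axis point $x+\varphi_{\di}(s)u$ for all such $s$, while by the lens inclusion in (ii) it is contained in the fibre over $x$ of $\di B_x$ for any circumscribed ball $B_x\supseteq K+tu$. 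One then argues fibre by fibre that these constraints, together with the correct $\cH^1$-measure $2L_x$, force the fibre of $\di(K+tu)$ over $x$ to equal $\{x+\sigma u:|\sigma-\varphi_{\di}(t)|\le L_x\}$ for a.e.\ $x$; by Fubini this gives $\di(K+tu)=K+\varphi_{\di}(t)u$ essentially. (Equivalently, once one establishes $K+\varphi_{\di}(t)u\subseteq\di(K+tu)$ essentially, the measure-preserving property and $\cH^n(K+\varphi_{\di}(t)u)=\cH^n(K+tu)=\cH^n(\di(K+tu))$ close the argument.)

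The hard part is exactly this last, fibre-by-fibre reconciliation: neither the inscribed-ball lower bound nor the circumscribed-ball upper bound on a given fibre is individually tight for all $x$ (for instance near the relative boundary of $K|H$, or when $K$ is polytopal), so pinning each fibre down to the exact target interval requires combining the measure constraint from Lemma~\ref{SteinerCompact} with the global monotonicity of $\di$ on $\mathcal{E}$ and the $H$-symmetry of $K$ to rule out ``sheared'' fibre configurations. This is where the bulk of the work lies.
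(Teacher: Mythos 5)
Your facts (i) and (ii) are correct and cleanly argued, and the instinct to combine Lemma~\ref{SteinerCompact} with measure preservation is the right one. The problem is the step you yourself flag as ``where the bulk of the work lies'': it is not a technicality but a genuine gap, and your inscribed/circumscribed-ball constraints cannot be pushed to close it. The lower bound you extract on the fibre of $\di(K+tu)$ over $x$ is the set of ``axis points'' $\{x+\varphi_\di(s)u: s\in (t-L_x,t+L_x)\}$, but since $\varphi_\di$ is merely a contraction, this set can have $\cH^1$-measure far smaller than $2L_x$. The extreme case is Steiner symmetrization $\di=S_H$, which satisfies all the hypotheses with $\varphi_\di\equiv 0$: then your axis points collapse to the single point $x$, whereas the actual fibre is the full interval of length $2L_x$. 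Moreover, the $\varepsilon$ in $B(x+su,\varepsilon)\subseteq K+tu$ is also limited by the distance from $x$ to the relative boundary of $K|H$, so even the small neighbourhoods around the axis points give almost nothing for thin bodies. On the other side, the intersection over circumscribed balls of $\di B$ is generally much larger than $K+\varphi_\di(t)u$, so the upper bound is equally non-binding. In short, the two constraints you establish do not pin down the fibres, and there is no evident way to make them do so without new input.

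The missing idea, and what the paper actually does, is to introduce $H$-symmetric spherical cylinders as an intermediate class. For $K=(B(x_0,r)\cap H)+[-su,su]$ one takes countably many balls $B(z_m+tu,s)$ of radius exactly $s$ (the half-height of the cylinder) with $\{z_m\}$ dense in $B(x_0,r)\cap H$; all these balls are centred on $H+tu$, so Lemma~\ref{lemm5} moves each of them by the \emph{same} vector $(\varphi_\di(t)-t)u$, and their union, intersected with the infinite cylinder $C(x_0,r)$, is essentially $K+tu$. This is what makes the covering tight at the level of sets, not just at the level of one fibre at a time; the ``respects $H$-cylinders'' hypothesis is then used to intersect with $C(x_0,r)$ and measure preservation gives equality. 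The general $H$-symmetric $K\in\cK^n_n$ is then handled by exhausting $\inte(K+tu)$ from inside by countably many such cylinders $C_k+tu$ and invoking monotonicity and measure preservation once more. If you try to replace the cylinders by directly covering $K+tu$ with balls, you either use small balls of variable height (and lose measure, as above) or balls of full radius $s$ centred at height $t$, which do not fit inside $K+tu$ when $K$ is not a cylinder — which is precisely why the two-stage reduction through cylinders is needed.
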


\begin{proof}
In the proof we ignore sets of ${\mathcal{H}}^n$-measure zero.  If $r>0$ and $x\in H$, let $C(x,r)$ be the infinite spherical cylinder of radius $r$ and axis $H^{\perp}+x$.  Suppose first that $K=(B(x,r)\cap H)+[-su,su]$ is an $H$-symmetric spherical cylinder, where $r,s>0$ and $x\in H$.  Let $t\in \R$ and let $S=\cup_{m\in\N}\, B(z_m+tu,s)$, where $\{z_m: m\in \N\}$ is dense in $B(x,r)\cap H$.  Then
$$\inte (K+tu)\subset S\cap C(x,r)\subset K+tu$$
and we can write $K+tu=S\cap C(x,r)$ since we are ignoring sets of $\cH^n$-measure zero.  By the monotonicity of $\di$,
\begin{equation}\label{cy1}
S'=\cup_{m\in \N}\,\di B(z_m+tu,s)\subset \di S,
\end{equation}
as there are countably many sets in the union. By Lemma~\ref{lemm5},
$$S'=\cup_{m\in\N}\,B(z_m+\varphi_{\di}(t)u,s)=S+(\varphi_{\di}(t)-t)u.$$
This and the measure-preserving property of $\di$ yield ${\mathcal{H}}^n(S')={\mathcal{H}}^n(S)={\mathcal{H}}^n(\di S)$ and therefore (\ref{cy1}) implies that $\di S=S'$.  Since $\di$ respects $H$-cylinders, we have $\di (K+tu)\subset C(x,r)$ and hence
$$\di (K+tu)\subset \di S\cap C(x,r)=(S+(\varphi_{\di}(t)-t)u)\cap C(x,r)=K+\varphi_{\di}(t)u.$$

Now let $K\in {\mathcal{K}}^n_n$ be an arbitrary $H$-symmetric set.  Let $\{x_m: m\in \N\}$ be a dense set in $K|H$.  Since $K+tu$ is symmetric with respect to $H+tu$, it is clear that we can find $H$-symmetric spherical cylinders $C_k=(B(x_{m_k},r_k)\cap H)+[-s_ku,s_ku]$, $k\in \N$, where $r_k,s_k$ are positive rationals, such that
\begin{equation}\label{cco}
\inte(K+tu)\subset \cup_k (C_k+tu)\subset K+tu.
\end{equation}
By the previous paragraph, $\di (C_k+tu)=C_k+\varphi_{\di}(t)u$, so (\ref{cco}) and the monotonicity of $\di$ yields
\begin{equation}\label{cco2}
E=\cup_k\di(C_k+tu)=\cup_k(C_k+\varphi_{\di}(t)u)=
(\cup_kC_k)+\varphi_{\di}(t)u\subset \di (K+tu).
\end{equation}
Since $E$ is a translate of $\cup_kC_k$, (\ref{cco}) and the measure-preserving property of $\di$ imply that ${\mathcal{H}}^n(E)={\mathcal{H}}^n(K+tu)={\mathcal{H}}^n(\di (K+tu))$. Hence, by (\ref{cco}) and (\ref{cco2}), we have
$$\di(K+tu)=E=(\cup_kC_k)+\varphi_{\di}(t)u=K+\varphi_{\di}(t)u,$$
essentially.
\end{proof}

Since $\varphi_{\di}$ is a contraction, we have
\begin{equation}\label{vie}
\varphi_{\di}(0)=0 ~~\quad\Leftrightarrow~~\quad |\varphi_{\di}(t)|\le |t|,\text{ for } t\in \R.
\end{equation}
Indeed, the right-hand side follows from the left-hand side on setting $s=0$ in $|\varphi_{\di}(s)-\varphi_{\di}(t)|\le |s-t|$, and the converse is trivial.

\begin{cor}\label{cor6a}
Let $H=u^{\perp}$, $u\in S^{n-1}$, and suppose that $\di:{\mathcal{K}}^n_n \rightarrow {\mathcal{L}}^n$ is monotonic, measure preserving, respects $H$-cylinders, and maps balls to balls.  Let $\varphi_{\di}:\R\to \R$ be the function from Lemma~\ref{lemm5}. Then $\di$ is invariant on $H$-symmetric sets if and only if $\di$ is invariant on $H$-symmetric cylinders, and this occurs if and only if either condition in \eqref{vie} holds.
\end{cor}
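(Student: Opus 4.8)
The plan is to prove the cycle of implications: invariance on $H$-symmetric sets $\Rightarrow$ invariance on $H$-symmetric spherical cylinders $\Rightarrow$ $\varphi_{\di}(0)=0$ $\Rightarrow$ invariance on $H$-symmetric sets; the equivalence of the condition $\varphi_{\di}(0)=0$ with $|\varphi_{\di}(t)|\le|t|$ for all $t$ is exactly \eqref{vie}. Since $\di$ is defined only on ${\mathcal{K}}^n_n$, ``invariant on $H$-symmetric sets'' means invariant on the $H$-symmetric members of ${\mathcal{K}}^n_n$, and all set equalities below are understood essentially.

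The first implication is immediate: an $H$-symmetric spherical cylinder $C=(B(x,r)\cap H)+s(B^n\cap H^{\perp})$ with $r,s>0$ is compact and convex and has nonempty interior (since $r,s>0$), hence lies in ${\mathcal{K}}^n_n$, and it is $H$-symmetric because its base $B(x,r)\cap H$ lies in $H$ while its one-dimensional fibre $s(B^n\cap H^{\perp})=[-su,su]$ is $o$-symmetric in $H^{\perp}$; so invariance on $H$-symmetric sets forces $\di C=C$. For the last implication, suppose $\varphi_{\di}(0)=0$ and let $K\in{\mathcal{K}}^n_n$ be $H$-symmetric. Lemma~\ref{lemm6}, applied with this $K$ and $t=0$, gives $\di K=K+\varphi_{\di}(0)u=K$, essentially, which is precisely invariance on $H$-symmetric sets.

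The only step with content is the middle implication. Assume $\di$ is invariant on $H$-symmetric spherical cylinders and fix one such cylinder, say $C=(B(o,r)\cap H)+[-su,su]$ with $r,s>0$. As just observed, $C$ is an $H$-symmetric convex body, so Lemma~\ref{lemm6} with $t=0$ yields $\di C=C+\varphi_{\di}(0)u$, essentially, while the invariance hypothesis gives $\di C=C$, essentially; hence $C=C+cu$, essentially, with $c=\varphi_{\di}(0)$. Iterating, $C=C+kcu$, essentially, for every $k\in\N$; but if $c\neq0$ then $C+kcu$ is disjoint from $C$ once $k|c|$ exceeds the diameter of $C$, contradicting ${\mathcal{H}}^n(C)>0$. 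Therefore $\varphi_{\di}(0)=0$, and combining the three implications with \eqref{vie} finishes the proof.

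I do not expect a serious obstacle here; the points that need care are merely checking that $H$-symmetric spherical cylinders genuinely belong to ${\mathcal{K}}^n_n$ and are $H$-symmetric, so that Lemma~\ref{lemm6} is applicable, and tracking the ``essentially'' qualifier throughout.
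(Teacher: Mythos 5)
Your proof is correct, but it establishes the key middle implication by a different mechanism than the paper.  Both proofs run a cycle of implications (you go sets $\Rightarrow$ cylinders $\Rightarrow$ $\varphi_{\di}(0)=0$ $\Rightarrow$ sets, the paper goes cylinders $\Rightarrow$ $|\varphi_{\di}(t)|\le|t|$ $\Rightarrow$ sets $\Rightarrow$ cylinders, and both finish with \eqref{vie}), and the two non-trivial endpoints agree.  The difference is in how the numerical condition on $\varphi_{\di}$ is extracted from cylinder invariance.  The paper encloses an arbitrary ball $B(tu,r)$ in the $H$-symmetric spherical cylinder $C=(B(tu,r)|H)+[-(|t|+r)u,(|t|+r)u]$, applies $\di$, and reads off $|\varphi_{\di}(t)|\le|t|$ directly by comparing $B(\varphi_{\di}(t)u,r)=\di B(tu,r)\subset\di C=C$ on $H^{\perp}$; so it lands on the second condition of \eqref{vie} without invoking Lemma~\ref{lemm6} at that point.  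You instead apply Lemma~\ref{lemm6} with $t=0$ to a single $H$-symmetric spherical cylinder $C$, obtaining $C=\di C=C+\varphi_{\di}(0)u$ essentially, and kill $\varphi_{\di}(0)$ by iterating the translation; this lands on the first condition of \eqref{vie}.  Your route is perfectly sound (the iteration argument correctly uses that essential equality is translation-invariant and transitive, and that $C$ has positive finite measure and bounded width in direction $u$); a slightly slicker finish would note that two convex bodies that are essentially equal are actually equal, so $C=C+\varphi_{\di}(0)u$ exactly, forcing $\varphi_{\di}(0)=0$ by looking at $h_C(u)$.  What the paper's approach buys is a one-step derivation of the stronger inequality $|\varphi_{\di}(t)|\le|t|$ without leaning on Lemma~\ref{lemm6} for that step; what yours buys is a cleaner separation in which Lemma~\ref{lemm6} is the only geometric input used twice.
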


\begin{proof}
In the proof we ignore sets of ${\mathcal{H}}^n$-measure zero. Suppose that $\di$ is invariant on $H$-symmetric cylinders.  If $r>0$ and $t\in \R$, then
$$B(tu,r)\subset (B(tu,r)| H)+[-(|t|+r)u,(|t|+r)u]=C,$$
say, an $H$-symmetric cylinder.  Then
$$B(\varphi_{\di}(t)u,r)=\di B(tu,r)\subset \di C=C.$$
This yields
$$[\varphi_{\di}(t)-r,\varphi_{\di}(t)+r]=B(\varphi_{\di}(t)u,r)|H^{\perp}
\subset C|H^{\perp}=[-(|t|+r)u,(|t|+r)u].$$
Therefore $|\varphi_{\di}(t)|\le |t|$.  Then $\varphi_{\di}(0)=0$, so setting $t=0$ in (\ref{hsym}) implies that $\di$ is invariant on $H$-symmetric sets. It follows directly from the definitions that the latter implies that $\di$ is invariant on $H$-symmetric cylinders.
\end{proof}

We shall find it convenient to define, for $K\in \cK^n$, $t\in \R$, and $u\in S^{n-1}$,
\begin{equation}\label{kt}
K_t=(K-tu)\cap (K^{\dagger}+tu).
\end{equation}
Note that if $H=u^{\perp}$, then $K_t$ is $H$-symmetric.

\begin{lem}\label{lemm11}
Let $H=u^{\perp}$, $u\in S^{n-1}$, and let $\varphi:\R\to\R$ be an arbitrary contraction.  Then
\begin{equation}\label{kseg1}
\left(\cup_{t\in\R}(K_t+\varphi(t)u)\right)\cap (H^{\perp}+x)=(K\cap (H^{\perp}+x))+(\varphi(t_x)-t_x)u
\end{equation}
for $K\in \cK^n_n$ and $x\in H$, where $K_t$ is defined by \eqref{kt} and $x+t_xu$ is the midpoint of $K\cap (H^{\perp}+x)$.
\end{lem}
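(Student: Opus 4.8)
The plan is to reduce the identity \eqref{kseg1} to an elementary computation on a single chord. Fix $x\in H$. We may assume $x\in K|H$, since otherwise $K\cap(H^{\perp}+x)=\emptyset$ and, because $K_t\subset K-tu$ and $(K-tu)|H=K|H$, every set $K_t+\varphi(t)u$ also projects into $K|H$; hence both sides of \eqref{kseg1} are empty. As $K$ is a convex body and $H^{\perp}+x$ is a line, the chord $K\cap(H^{\perp}+x)$ is a segment (possibly degenerate), which I write as $\{x+su:\,|s-t_x|\le\ell_x\}$, where $x+t_xu$ is its midpoint and $\ell_x\ge0$ is its half-length.

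The first step is to describe $K_t\cap(H^{\perp}+x)$. Since $u\perp H$, adding a multiple of $u$ leaves the line $H^{\perp}+x$ fixed, and reflection in $H$ sends $x+su$ to $x-su$; from the description of the chord one then reads off that $(K-tu)\cap(H^{\perp}+x)$ and $(K^{\dagger}+tu)\cap(H^{\perp}+x)$ are the segments of half-length $\ell_x$ centered at $x+(t_x-t)u$ and at $x-(t_x-t)u$, respectively. These two segments are reflections of one another in $H$, so their intersection $K_t\cap(H^{\perp}+x)$ is the $H$-symmetric segment $\{x+su:\,|s|\le\ell_x-|t-t_x|\}$ when $|t-t_x|\le\ell_x$, and is empty otherwise. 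Translating by $\varphi(t)u$, it follows that $(K_t+\varphi(t)u)\cap(H^{\perp}+x)$ is, for $|t-t_x|\le\ell_x$, the segment of half-length $\ell_x-|t-t_x|$ centered at $x+\varphi(t)u$.

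Finally I would take the union over $t\in\R$ and compare with the right-hand side of \eqref{kseg1}. Translating the chord by $(\varphi(t_x)-t_x)u$, that right-hand side is precisely the segment of half-length $\ell_x$ centered at $x+\varphi(t_x)u$. The inclusion $\supseteq$ is immediate, since the term $t=t_x$ already contributes this whole segment. For $\subseteq$, put $r=t-t_x$ with $|r|\le\ell_x$; since $\varphi$ is a contraction, $|\varphi(t_x+r)-\varphi(t_x)|\le|r|$, so every point of the segment of half-length $\ell_x-|r|$ centered at $x+\varphi(t_x+r)u$ lies within distance $|r|+(\ell_x-|r|)=\ell_x$ of $x+\varphi(t_x)u$, hence inside the target segment. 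This gives \eqref{kseg1}.

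No real difficulty arises; the only points needing care are the bookkeeping that identifies $(K-tu)\cap(H^{\perp}+x)$ and $(K^{\dagger}+tu)\cap(H^{\perp}+x)$ as reflections of each other in $H$ (which is what makes $K_t\cap(H^{\perp}+x)$ $H$-symmetric, consistently with the observation following \eqref{kt}), together with the degenerate cases $\ell_x=0$ and $x\notin K|H$, handled as above.
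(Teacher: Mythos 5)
Your proof is correct and takes essentially the same approach as the paper's: both arguments reduce the identity to a single chord $K\cap(H^\perp+x)$, observe that $K_t\cap(H^\perp+x)$ is an $H$-symmetric segment whose half-length shrinks to $\ell_x-|t-t_x|$, and then use the contraction property $|\varphi(t)-\varphi(t_x)|\le|t-t_x|$ to get the inclusion $\subseteq$, with $\supseteq$ coming from the term $t=t_x$. The paper phrases the chord reduction via the identities $K_t\cap(H^\perp+x)=(K\cap(H^\perp+x))_t$ and $(K\cap(H^\perp+x))_{t_x}=(K\cap(H^\perp+x))-t_xu$, while you compute the two segments $(K-tu)\cap(H^\perp+x)$ and $(K^\dagger+tu)\cap(H^\perp+x)$ and intersect them directly, but the content is the same.
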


\begin{proof}
If $M\in\cK^n$, $x\in H$, and $s,t\in \R$, then $x+su\in M_t$ if and only if $x+(t\pm s)u\in M$.  Applying this first with $M=K$ and then with $M=K\cap(H^\perp+x)$ shows that
\begin{equation}\label{eq:first}
K_t\cap(H^\perp+x)=(K\cap(H^\perp+x))_t.
\end{equation}
Note also that
\begin{equation}\label{eq:second}
(K\cap(H^\perp+x))_{t_x}=(K\cap(H^\perp+x))-t_xu.
\end{equation}
Let $x\in H$ be such that the (possibly degenerate) line segment $S=K\cap(H^\perp+x)$ is nonempty and let $2r_x\ge0$ be its length. Then $S_{t_x+t}=\emptyset$ for $|t|>r_x$.  Suppose that $|t|\le r_x$. Then $S_{t_x+t}=[x-(r_x-|t|)u,x+(r_x-|t|)u]$.  As $\varphi$ is a contraction,
$|\varphi(t_x+t)-\varphi(t_x)|\le |t|$ and hence
$$S_{t_x+t}+(\varphi(t_x+t)-\varphi(t_x))u\subset S_{t_x+t}+[-|t|u,|t|u]=[x-r_xu,x+r_xu]=S_{t_x}.$$
Rearranging and replacing $t_x+t$ by $t$, we obtain
\begin{align}\label{eqThree}
(K\cap(H^\perp+x))_t+\varphi(t)u=S_t+\varphi(t)u\subset
S_{t_x}+ \varphi(t_x)u=	 (K\cap(H^\perp+x))_{t_x}+\varphi(t_x)u
\end{align}
whenever the left-hand side is nonempty.  Hence \eqref{eqThree} holds for $x\in H$ and $t\in\R$.

Applying, in turn, \eqref{eq:first}, \eqref{eqThree}, and \eqref{eq:second}, we obtain
$$\left(\cup_{t\in\R}(K_t+\varphi(t)u)\right)\cap (H^\perp+x)\subset (K\cap(H^\perp+x))+(\varphi(t_x)-t_x)u.$$
The reverse inclusion is a consequence of
$$ \left(\cup_{t\in\R}(K_t+\varphi(t)u)\right)\cap (H^\perp+x)\supset (K_{t_x}\cap (H^\perp+x))+\varphi(t_x)u$$
together with \eqref{eq:first} and \eqref{eq:second}.  This proves (\ref{kseg1}).
\end{proof}

\begin{thm}\label{thmm7}
Let $H=u^{\perp}$, $u\in S^{n-1}$,
let $\di:{\mathcal{K}}^n_n \rightarrow {\mathcal{L}}^n$, and let $\varphi:\R\to\R$ be a contraction. The following are equivalent.

\noindent{\rm{(i)}} $\di$ is monotonic, measure preserving, respects $H$-cylinders, and maps balls to balls. The mapping $\varphi$ coincides with the function $\varphi_{\di}$ from Lemma~\ref{lemm5}.

\noindent{\rm{(ii)}} For each $K\in \cK^n_n$,
\begin{equation}\label{ktp}
\di K=\cup_{t\in\R}\,\inte(K_t+\varphi(t)u),
\end{equation}
essentially, where $K_t$ is defined by \eqref{kt}.

\noindent{\rm{(iii)}} For each $K\in \cK^n_n$ and ${\mathcal{H}}^{n-1}$-almost all $x\in H$,
\begin{equation}\label{kseg}
(\di K)\cap (H^{\perp}+x)=(K\cap (H^{\perp}+x))+(\varphi(t_x)-t_x)u,
\end{equation}
up to a set of ${\mathcal{H}}^1$-measure zero, where $x+t_xu$ is the midpoint of $K\cap (H^{\perp}+x)$.
\end{thm}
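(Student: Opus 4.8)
The plan is to prove (ii)$\Leftrightarrow$(iii) and then (i)$\Rightarrow$(iii)$\Rightarrow$(i), so that all three statements become equivalent; apart from one book-keeping point, the whole argument reduces to one-dimensional facts about line segments, which are supplied by Lemmas~\ref{lemm5},~\ref{lemm6},~\ref{lemm11}, and~\ref{SteinerCompact}. The equivalence of (ii) and (iii) is the quickest part. Lemma~\ref{lemm11}, applied to the given contraction $\varphi$, shows that $\cup_{t\in\R}(K_t+\varphi(t)u)$ meets $H^{\perp}+x$ in the segment $(K\cap(H^{\perp}+x))+(\varphi(t_x)-t_x)u$ for every $x\in H$ with $K\cap(H^{\perp}+x)\neq\emptyset$. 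Passing from $K_t+\varphi(t)u$ to $\inte(K_t+\varphi(t)u)$ only removes, slice by slice, a subset of the endpoints of these segments, hence changes the union by a set of ${\mathcal H}^n$-measure zero; so the open set in \eqref{ktp} is essentially the set described by Lemma~\ref{lemm11}, and \eqref{kseg} is exactly its slice version over ${\mathcal H}^{n-1}$-almost all $x\in H$.

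For (i)$\Rightarrow$(iii), fix $K\in\cK^n_n$. For each $t\in\R$ with $K_t$ having nonempty interior, $K_t$ is an $H$-symmetric convex body and $K_t+tu=K\cap(K^{\dagger}+2tu)\subset K$, so monotonicity gives $\di(K_t+tu)\subset\di K$, while Lemma~\ref{lemm6}---whose hypotheses are precisely those in (i), with $\varphi=\varphi_{\di}$---gives $\di(K_t+tu)=K_t+\varphi(t)u$; hence $K_t+\varphi(t)u\subset\di K$ up to an ${\mathcal H}^n$-null set $N_t$. Replacing $N_t$ by $\cup_{t\in\Q}N_t$ reduces to a single null set, and since $\varphi$ is a contraction one checks that rational parameters already recover, up to its endpoints, the full segment $(K\cap(H^{\perp}+x))+(\varphi(t_x)-t_x)u$ of Lemma~\ref{lemm11}: a rational $t$ within distance $\delta/2$ of $t_x$ captures any point of that segment lying at distance $\delta$ from its endpoints. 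Thus for ${\mathcal H}^{n-1}$-almost all $x\in H$ the chord $(\di K)\cap(H^{\perp}+x)$ contains this segment up to an ${\mathcal H}^1$-null set. By Lemma~\ref{SteinerCompact} that chord has the same ${\mathcal H}^1$-measure as $K\cap(H^{\perp}+x)$, which is the length of the segment, so the inclusion is an equality up to ${\mathcal H}^1$-null sets, and \eqref{kseg} follows.

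For (iii)$\Rightarrow$(i), all four properties follow from \eqref{kseg} by slicing and Fubini. Measure preservation is immediate, since corresponding chords have equal ${\mathcal H}^1$-measure for almost all $x$. For monotonicity, if $K\subset L$ then for almost all $x$ the chords $K\cap(H^{\perp}+x)$ and $L\cap(H^{\perp}+x)$---segments with midpoint $u$-coordinates $m,m'$ and half-lengths $r\le r'$---satisfy $|m-m'|\le r'-r$, hence $|\varphi(m)-\varphi(m')|\le r'-r$ because $\varphi$ is a contraction, and this is exactly the condition for $(K\cap(H^{\perp}+x))+(\varphi(m)-m)u$ to lie inside $(L\cap(H^{\perp}+x))+(\varphi(m')-m')u$; integrating over $x$ gives $\di K\subset\di L$. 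The map respects $H$-cylinders because $(\di K)\cap(H^{\perp}+x)$ is nonempty only when $K\cap(H^{\perp}+x)$ is. Finally, if $K=B(z+tu,r)$ with $z\in H$, every chord of $K$ orthogonal to $H$ has midpoint $u$-coordinate $t$, so \eqref{kseg} translates all of them by $(\varphi(t)-t)u$, turning $K$ into $B(z+\varphi(t)u,r)$ up to a null set; hence $\di$ maps balls to balls, and comparison with Lemma~\ref{lemm5} forces $\varphi=\varphi_{\di}$. This gives (i).

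The main obstacle is the null-set book-keeping in (i)$\Rightarrow$(iii): monotonicity only yields $K_t+\varphi(t)u\subset\di K$ modulo a null set that a priori depends on $t$, and an uncountable union of null sets need not be null, so one must replace the continuum of parameters by a countable dense set and then verify that this weaker information still reconstructs the entire chord predicted by Lemma~\ref{lemm11}; this is the place where the Lipschitz bound on $\varphi$ enters quantitatively. Every other step is a routine one-dimensional statement about segments together with Fubini's theorem.
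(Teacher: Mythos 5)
Your argument is correct, and it uses the same core ingredients as the paper (Lemmas~\ref{lemm5}, \ref{lemm6}, and \ref{lemm11}, plus the countable-union trick to tame the $t$-dependent null sets), but it organizes the implications differently and substitutes one measure-theoretic step. The paper proves (i)$\Rightarrow$(ii)$\Rightarrow$(iii)$\Rightarrow$(ii)$\Rightarrow$(i); you instead prove (ii)$\Leftrightarrow$(iii) directly from Lemma~\ref{lemm11}, then (i)$\Rightarrow$(iii) and (iii)$\Rightarrow$(i), bypassing \eqref{ktp} as an intermediate step in the main chain. For the key lower bound in (i)$\Rightarrow$(iii), the paper passes from the rational union $\cup_{t\in\Q}(K_t+\varphi_\di(t)u)$ to the real union via Hausdorff convergence of the convex bodies $K_t$, then builds the auxiliary set $V$ and compares $\cH^n$-measures by Fubini; you argue entirely slice-by-slice, showing the rational parameters already exhaust each chord minus a $\delta$-neighborhood of its endpoints (using the Lipschitz bound on $\varphi$ quantitatively, exactly where the paper invokes Hausdorff continuity) and then invoke Lemma~\ref{SteinerCompact} to upgrade the one-sided inclusion to a slice-wise equality. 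That use of Lemma~\ref{SteinerCompact} is a genuine shortcut: the paper proves Lemma~\ref{SteinerCompact} earlier but does not cite it inside the proof of Theorem~\ref{thmm7}, instead reproving the needed measure comparison inline via its auxiliary set $V$. Your (iii)$\Rightarrow$(i) is the slice-level unwinding that the paper performs in its (ii)$\Rightarrow$(i) step; the chord-containment inequality $|\varphi(m)-\varphi(m')|\le r'-r$ is the right computation. One small point worth making explicit, which you gloss over in the (ii)$\Leftrightarrow$(iii) step: the claim that taking interiors ``only removes endpoints, slice by slice'' requires observing that for $x\in\relint(K|H)$ one has $x\in\relint(K_{t_x}|H)$, so that $\inte(K_{t_x}+\varphi(t_x)u)$ really does slice to the open chord; this is the content the paper makes implicitly in its chain of equalities \eqref{eqjan101}, and it is true, but it deserves a sentence rather than being folded into ``only removes endpoints.''
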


\begin{proof}
(i)$\Rightarrow$(ii).  In the proof we ignore sets of ${\mathcal{H}}^n$-measure zero.  Let $K\in {\mathcal{K}}^n_n$ and let $t\in \R$.  The set $K_t$ defined by (\ref{kt}) is $H$-symmetric, so $K_t+tu\subset K$ is symmetric with respect to $H+tu$.  From the monotonicity of $\di$ and (\ref{hsym}), we obtain
$$\cup_{t\in\Q}\,(K_t+\varphi_{\di}(t)u)=\cup_{t\in\Q}\,\di(K_t+tu)\subset \di K,$$
because there are countably many sets in the union.  Let $t\in \R$ with $K_t\ne \emptyset$ and choose $t_m\in \Q$ with $K_{t_m}\ne \emptyset$ such that $t_m\to t$ as $m\to\infty$.  Since $\varphi_{\di}$ is a Lipschitz map and $t\mapsto K_t$ is continuous on $\{t\in \R:K_t\ne \emptyset\}$, we have $K_{t_m}+\varphi_{\di}(t_m)u\to K_t+\varphi_{\di}(t)u$ in the Hausdorff metric as $m\to \infty$ and hence
$$\inte(K_t+\varphi_{\di}(t)u)\subset \cup_{m\in \N}\,(K_{t_m}+\varphi_{\di}(t_m)u).$$
It follows that
$$U=\cup_{t\in\R}\,\inte(K_t+\varphi_{\di}(t)u)\subset \di K.$$
Let $V=\cup_{x\in (\inte K)|H}\,\inte(K_{t_x}+\varphi_{\di}(t_x)u)$, where $x+t_xu$ is the midpoint of $K\cap (H^{\perp}+x)$.  Then
\begin{equation}\label{v1}
V\subset U\subset \di K.
\end{equation}
If $x\in (\inte K)|H$, then $(K_{t_x}+t_xu)\cap (H^{\perp}+x)=K\cap (H^{\perp}+x)$, so
\begin{eqnarray}\label{eqjan101}
\cH^1(V\cap (H^\perp+x))&\ge & \cH^1\left((\inte(K_{t_x}+\varphi_{\di}(t_x)u))\cap (H^\perp+x)\right)\nonumber\\
&= & \cH^1((K_{t_x}+\varphi_{\di}(t_x)u)\cap (H^\perp+x))\nonumber\\
&=&\cH^1((K_{t_x}+t_xu)\cap (H^\perp+x))= \cH^1(K\cap (H^\perp+x)).
\end{eqnarray}
From this, an application of Fubini's theorem and the measure-preserving property of $\di$ give ${\mathcal{H}}^n(V)\ge {\mathcal{H}}^n(K)={\mathcal{H}}^n(\di K)$.  Therefore, by (\ref{v1}), we have $V=U=\di K$, essentially.

(ii)$\Rightarrow$(iii).  Let $K\in \cK^n_n$ and let $W=\cup_{t\in\R}\,(K_t+\varphi_{\di}(t)u)$.  Then
$$U\cap (H^{\perp}+x)\subset W\cap (H^{\perp}+x)=(K\cap (H^{\perp}+x))+(\varphi(t_x)-t_x)u$$
for $x\in H$, by \eqref{kseg1}.  Moreover, \eqref{v1} and \eqref{eqjan101} imply that
$$\cH^1(U\cap (H^{\perp}+x))\ge \cH^1(V\cap (H^{\perp}+x))\ge\cH^1(K\cap (H^{\perp}+x)),$$
so
\begin{equation}\label{eqjan102}
U\cap (H^{\perp}+x)=(K\cap (H^{\perp}+x))+(\varphi(t_x)-t_x)u,
\end{equation}
up to a set of $\cH^1$-measure zero.  By (ii), $\di K=U$, essentially, and (iii) follows.

(iii)$\Rightarrow$(ii).  Let $K\in \cK^n_n$.  If (iii) holds, then (ii) follows from \eqref{eqjan102} (which did not require (ii) for its proof) and Fubini's theorem.

(ii)$\Rightarrow$(i). Assume that (ii) holds.  Clearly, $\di$ is monotonic due to \eqref{ktp}.  We have already seen that (ii) implies (iii). That $\di$ is {measure} preserving and respects $H$-cylinders follows directly from (\ref{kseg}) and Fubini's theorem. If $z\in H$, $t\in \R$, and $r>0$, then \eqref{ktp} implies that
$$\di B(z+tu,r)\supset B(z+tu,r)_t+\varphi(t)u=B(z+\varphi(t)u,r),$$
essentially.  Together with the measure-preserving property, this proves that $\di$ maps balls to balls.
\end{proof}

None of the properties of $\di$ listed in Theorem~\ref{thmm7}(i) can be omitted.  Indeed, no three of these properties imply the fourth, as is shown by the map $\di'$ from Example~\ref{Marex} below, the map taking a convex body to the $o$-symmetric ball of the same volume, Minkowski symmetrization (see \cite[Section~3]{BGG}), and Example~\ref{thm9ex1} below.

In view of (\ref{kseg}), a map $\di$ satisfying Theorem~\ref{thmm7}(i) may be regarded as a ``still" in a parallel chord movement in the direction $u$, in the sense of convex geometry.  The concept of a parallel chord movement was, in a more general form, introduced by Rogers and Shephard \cite{RS} (see also \cite[p.~543]{Sch14}) and is extremely useful in convex geometry, where, however, it is always assumed that the movement preserves convexity. It is easy to see that $\di$ preserves convexity if and only if $\varphi_{\di}$ is affine; cf.~the proof of Theorem~\ref{thmmar14}.

\begin{cor}\label{corlemExtend}
Let $H=u^{\perp}$, $u\in S^{n-1}$, be oriented with $u\in H^+$ and suppose that ${\mathcal E}={\mathcal K}_n^n$ or ${\mathcal K}^n$.  Suppose that $\di:{\mathcal{E}}\rightarrow {\mathcal{L}}^n$ is monotonic, measure preserving, respects $H$-cylinders, and maps balls to balls.  Let $\varphi_{\di}:\R\to \R$ be the function from Lemma~\ref{lemm5}. Then $\varphi_{\di}(t)=t$, $-t$, $|t|$, or $-|t|$, if and only if $\di$ essentially equals $\mathrm{Id}$, $\dagger$, $\di_{P_H}$, or $\di_{P_H}^\dagger$, respectively.
\end{cor}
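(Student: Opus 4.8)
The plan is to read off the action of $\di$ on chords parallel to $u$ from Theorem~\ref{thmm7}, and compare it with the corresponding chord-actions of $\mathrm{Id}$, $\dagger$, $\di_{P_H}$, and $\di_{P_H}^\dagger$. Throughout I would work with $K\in\mathcal{K}^n_n$; when $\mathcal{E}=\mathcal{K}^n$ the remaining sets $K\in\mathcal{K}^n\setminus\mathcal{K}^n_n$ are $\mathcal{H}^n$-null, and since $\di$ is measure preserving and each of $\mathrm{Id}$, $\dagger$, $\di_{P_H}$, $\di_{P_H}^\dagger$ carries an $\mathcal{H}^n$-null set to an $\mathcal{H}^n$-null set, every assertion of the corollary holds trivially on those sets.

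Theorem~\ref{thmm7}, applied with $\varphi=\varphi_{\di}$ (the hypotheses of the corollary are precisely those of part~(i) of that theorem), gives, for each $K\in\mathcal{K}^n_n$ and $\mathcal{H}^{n-1}$-almost all $x\in H$, up to an $\mathcal{H}^1$-null set,
$$(\di K)\cap(H^\perp+x)=(K\cap(H^\perp+x))+(\varphi_{\di}(t_x)-t_x)u,$$
where $x+t_xu$ is the midpoint of $K\cap(H^\perp+x)$. I would then record the analogous slice descriptions of the four reference maps. For $\mathrm{Id}$ the shift is $0$; for $\dagger$ it is $-2t_x$, since $K^\dagger\cap(H^\perp+x)=(K\cap(H^\perp+x))^\dagger$ is the segment of the same length centered at $x-t_xu$ (here $(H^\perp+x)^\dagger=H^\perp+x$ because $x\in H$). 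For $\di_{P_H}$, a short one-dimensional computation from \eqref{pol} shows that, since $u\in H^+$, the slice of $\di_{P_H}K$ at $x$ equals $K\cap(H^\perp+x)$ when $t_x\ge0$ and equals $(K\cap(H^\perp+x))^\dagger=K^\dagger\cap(H^\perp+x)$ when $t_x<0$; thus the shift is $|t_x|-t_x$, and since $\di_{P_H}^\dagger K=(\di_{P_H}K)^\dagger$, the shift for $\di_{P_H}^\dagger$ is $-t_x-|t_x|$. Comparing with the displayed formula: if $\varphi_{\di}$ is identically $t$, $-t$, $|t|$, or $-|t|$, then $\di K$ and, respectively, $K$, $K^\dagger$, $\di_{P_H}K$, or $\di_{P_H}^\dagger K$ have the same slice for $\mathcal{H}^{n-1}$-almost every $x$, so Fubini's theorem yields $\di K$ essentially equal to the corresponding set, for every $K\in\mathcal{K}^n_n$. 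This is the ``if'' direction.

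For the converse I would test $\di$ on balls $B(tu,r)$, which lie in $\mathcal{K}^n_n$. Since $t_x=t$ for all $x\in H$ with $\|x\|<r$, the slice descriptions above give, up to $\mathcal{H}^n$-null sets, $\mathrm{Id}\,B(tu,r)=B(tu,r)$, $B(tu,r)^\dagger=B(-tu,r)$, $\di_{P_H}B(tu,r)=B(|t|u,r)$, and $\di_{P_H}^\dagger B(tu,r)=B(-|t|u,r)$. On the other hand, Lemma~\ref{lemm5} gives $\di B(tu,r)=B(\varphi_{\di}(t)u,r)$, essentially. Hence, if $\di$ essentially equals $\mathrm{Id}$ (respectively $\dagger$, $\di_{P_H}$, $\di_{P_H}^\dagger$), then $B(\varphi_{\di}(t)u,r)$ and the corresponding reference ball differ by an $\mathcal{H}^n$-null set for every $t\in\R$ and $r>0$; since two balls of the same positive radius whose symmetric difference is $\mathcal{H}^n$-null must coincide, and so have the same center, this forces $\varphi_{\di}(t)=t$ (respectively $-t$, $|t|$, $-|t|$) for all $t$.

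The only point that requires genuine work is the slice description of $\di_{P_H}$, that is, the assertion that polarization, restricted to a chord parallel to $u$, leaves the chord fixed if its midpoint satisfies $t_x\ge0$ and reflects it in $H$ otherwise; I expect this to be the main obstacle, although it is elementary. Writing $I=\{s\in\R:x+su\in K\}$, which is a segment because $K$ is convex, \eqref{pol} shows that the slice of $\di_{P_H}K$ at $x$ consists of those $s>0$ with $s\in I\cup(-I)$ together with those $s<0$ with $s\in I\cap(-I)$, and a case analysis on the sign of the midpoint of $I$ identifies this set with $I$ or with $-I$. Everything else is bookkeeping with Theorem~\ref{thmm7}, Lemma~\ref{lemm5}, and Fubini's theorem.
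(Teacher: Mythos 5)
Your proof takes essentially the same approach as the paper: one direction is established by testing $\di$ on balls $B(tu,r)$ via Lemma~\ref{lemm5}, and the other by comparing the chord-shift formula (\ref{kseg}) of Theorem~\ref{thmm7}(iii) against the slice descriptions of $\mathrm{Id}$, $\dagger$, $\di_{P_H}$, and $\di_{P_H}^\dagger$. The only difference is cosmetic: you spell out the elementary computation showing that $\di_{P_H}$ fixes a chord when $t_x\ge 0$ and reflects it in $H$ when $t_x<0$, which the paper treats as self-evident in its displayed chain of equalities, and you handle all four cases uniformly rather than doing one and declaring the others similar.
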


\begin{proof}
If $\di$ is given, it is easy to check that $\varphi_{\di}$ has the appropriate form, by applying $\di$ to balls and using (\ref{ball}).  The converse follows directly from (\ref{kseg}).  Indeed, we need only consider $K\in {\mathcal K}_n^n$, for otherwise ${\mathcal{H}}^n(K)=0$ and there is nothing to prove. Consider, for example, the case when $\varphi_{\di}=|t|$; the other cases are similar.  By (\ref{kseg}), for ${\mathcal{H}}^{n-1}$-almost all $x\in H$, we have
\begin{eqnarray*}
(\di K)\cap (H^{\perp}+x)&=&(K\cap (H^{\perp}+x))+(|t_x|-t_x)u\\
&=&\begin{cases}
K\cap (H^{\perp}+x),&\text{if } t_x\ge 0,\\
K^{\dagger}\cap (H^{\perp}+x),&\text{if } t_x<0,
\end{cases}\\
&=&(\di_{P_H}K)\cap (H^{\perp}+x),
\end{eqnarray*}
up to a set of ${\mathcal{H}}^1$-measure zero.  This shows that $\di$ essentially equals $\di_{P_H}$, as required.
\end{proof}

\begin{ex}\label{Marex}
{\rm  Let $H=u^{\perp}$, $u\in S^{n-1}$, be oriented with $u\in H^+$.  Define $\di':\cL^n\to\cL^n$ by $\di'E=E^+\cup E^-$, where
$E^+=E\cap H^+$ and $E^-\subset H^-$ is given by
$$E^-\cap (H^\perp+x)=[x,x-\lambda u],$$
with $\lambda=\cH^1(E\cap H^-\cap(H^\perp+x))$ for each $x\in H$.  Thus $\di'=\Id$ when applied to subsets of  $H^+$ and $\di'$ corresponds to Blaschke shaking \cite[Note~2.4]{Gar06} with respect to $H$  when applied to subsets of  $H^-$.

Define $\di:\cL^n\to\cL^n$ by $\di=\di'\circ \di_{P_H}$. It is easy to see that $\di'$ is monotonic, measure preserving, and respects $H$-cylinders, and hence $\di$ also has these three properties.

We claim that $\di=\di_{P_H}$ on $\cK^n$.  To see this, let $K\in {\mathcal K}^n$. Then
$$(\di_{P_H}K)\cap H^-=(K\cap K^\dagger)\cap H^-.$$
As $K\cap K^\dagger$ is $H$-symmetric and convex, we have
$$\left((K\cap K^\dagger)\cap H^-\right)\cap (H^\perp+x)=[x,x-\lambda u],$$
where $\lambda=\cH^1((K\cap K^\dagger)\cap H^-\cap(H^\perp+x))$ for $x\in H$. Thus, $\di K= \di'(\di_{P_H}K)=\di_{P_H}K$, proving the claim.

As a consequence, $\di$ maps balls to balls and therefore satisfies all the hypotheses of Corollary~\ref{corlemExtend} with $\varphi_{\di}(t)=|t|$, yet it is clear that $\di$ is essentially different from $\di_{P_H}$.  Indeed, Corollary~\ref{corlemExtend} is false for maps $\di: \cK^n_n\subset\cE\to\cL^n$ if $\cE$ contains an $H$-symmetric union of two disjoint balls, since if $E$ is such a union, then $\di E\neq \di_{P_H}E$.\qed}
\end{ex}

\begin{lem}\label{ballmap}
Let $H\in {\mathcal{G}}(n,n-1)$, let ${\mathcal{K}}^n_n\subset{\mathcal{E}}\subset {\mathcal{L}}^n$, and suppose that $\di:{\mathcal{E}}\rightarrow {\mathcal{L}}^n$ is measure preserving and perimeter preserving on convex bodies. Then $\di$ maps balls to balls.
\end{lem}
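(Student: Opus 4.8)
The plan is to reduce the statement to the rigidity (equality) part of the isoperimetric inequality for sets of finite perimeter. Fix a ball $B=B(x,r)$. If $r=0$ then $B$ is a single point, hence $\cH^n$-null and not a convex body, so there is nothing to prove; thus we may assume $r>0$, and then $B\in\cK^n_n\subset\cE$, so $\di B$ is defined. The measure-preserving property gives $\cH^n(\di B)=\cH^n(B)=\kappa_n r^n$, and the assumption that $\di$ is perimeter preserving on convex bodies gives that $\di B$ is a set of finite perimeter with
$$S(\di B)=S(B)=2V_{n-1}(B)=n\kappa_n r^{n-1}.$$

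Next I would invoke the isoperimetric inequality in its sharp form for $\cH^n$-measurable sets $E\subset\R^n$ of finite perimeter with $0<\cH^n(E)<\infty$, namely $S(E)\ge n\kappa_n^{1/n}\cH^n(E)^{(n-1)/n}$, together with the fact that equality forces $E$ to coincide, up to a set of $\cH^n$-measure zero, with a ball (see, e.g., \cite{EG}). Applying this with $E=\di B$ and using $\cH^n(\di B)=\kappa_n r^n$, the right-hand side equals $n\kappa_n^{1/n}(\kappa_n r^n)^{(n-1)/n}=n\kappa_n r^{n-1}=S(\di B)$, so equality holds. Hence $\di B$ agrees up to a null set with some ball $B(x',r')$, and necessarily $r'=r$ by the volume identity. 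Since relationships between sets are understood up to $\cH^n$-measure zero, this is precisely the assertion that $\di$ maps balls to balls.

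There is no real obstacle here; the only care needed is in citing the equality (rigidity) case of the isoperimetric inequality in the form valid for arbitrary finite-perimeter sets, which is classical, and in checking the normalization $2V_{n-1}(B(x,r))=n\kappa_n r^{n-1}$, which follows from $V_{n-1}(B^n)=n\kappa_n/2$, so that the hypothesized perimeter of $\di B$ indeed equals the isoperimetric extremal value and equality can be extracted.
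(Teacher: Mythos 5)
Your proposal is correct and follows essentially the same route as the paper: both deduce that $\di B$ is a finite-perimeter set with the same volume and perimeter as $B$, and then invoke the equality (rigidity) case of the isoperimetric inequality for sets of finite perimeter to conclude $\di B$ is a ball up to an $\cH^n$-null set (the paper cites De Giorgi and Maggi rather than \cite{EG}, but the content is the same).
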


\begin{proof}
Let $x\in \R^n$ and let $r>0$.  Our assumptions imply that $\di B(x,r)$ is a set of finite perimeter that has the same ${\mathcal{H}}^n$-measure and perimeter as $B(x,r)$.  It follows from the isoperimetric inequality for sets of finite perimeter and its equality condition (see \cite{DeG} or \cite[Theorem~14.1]{Mag}) that $\di B(x,r)$ is a ball, modulo a set of ${\mathcal{H}}^{n}$-measure zero.
\end{proof}

\begin{thm}\label{thmm9convex}
Let $H=u^{\perp}$, $u\in S^{n-1}$, be oriented with $u\in H^+$ and suppose $\di:{\mathcal K}_n^n\rightarrow {\mathcal{L}}^n$ is monotonic, measure preserving, respects $H$-cylinders, and perimeter preserving on convex bodies. If $\varphi_\di$ is the contraction defined in Lemma~\ref{lemm5}, then $|\varphi_{\di}'(t)|=1$ for ${\mathcal{H}}^{1}$-almost all $t\in \R$.

Conversely, if $\varphi:\R\to \R$ is a contraction satisfying $|\varphi'(t)|=1$ for ${\mathcal{H}}^{1}$-almost all $t\in \R$, then \eqref{kseg} defines a map $\di:{\mathcal K}^n\rightarrow {\mathcal{L}}^n$ that is monotonic, measure preserving, respects $H$-cylinders, and perimeter preserving on convex bodies.
\end{thm}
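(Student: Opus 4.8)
The plan is to derive both implications from a single perimeter formula for parallel chord movements. I would write a convex body $K\in\cK^n_n$ as $\{x+yu:x\in\Omega,\ g^-(x)\le y\le g^+(x)\}$, where $\Omega=K|H$ and $g^+$ (resp.\ $g^-$) is concave (resp.\ convex) on $\Omega$, and put $t_x=\tfrac12(g^++g^-)(x)$, $r_x=\tfrac12(g^+-g^-)(x)$. For a contraction $\varphi:\R\to\R$, let $\di_\varphi$ be the map defined by \eqref{kseg} (that is, by \eqref{ktp}); then, up to an $\cH^n$-null set, $\di_\varphi K$ is the region between the graphs over $\Omega$ of $\tilde g^{\pm}:=\varphi\circ t_{\,\cdot}\pm r_{\,\cdot}$. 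Writing $m=\nabla t_x$, $\rho=\nabla r_x$ (defined a.e.\ on $\Omega$) and $\psi(x)=\varphi'(t_x)$, read off through the a.e.\ chain rule for the Lipschitz composition $\varphi\circ t_{\,\cdot}$ (so that $\psi m=0$ a.e.\ on $\{m=0\}$), one has $\nabla\tilde g^{\pm}=\psi m\pm\rho$ a.e., and the first step is to establish
\begin{equation*}
S(\di_\varphi K)=\int_{\partial\Omega}2r_x\,d\cH^{n-2}(x)+\int_{\Omega}\Bigl(\sqrt{1+|\psi m+\rho|^2}+\sqrt{1+|\psi m-\rho|^2}\Bigr)\,dx ,
\end{equation*}
whose case $\varphi=\Id$ (where $\psi\equiv1$ and $\di_{\Id}K=K$, essentially) is the standard decomposition of $S(K)$ into the ``vertical'' term $\int_{\partial\Omega}(g^+-g^-)\,d\cH^{n-2}$ over $\partial\Omega$ and the graph areas $\int_\Omega\sqrt{1+|\nabla g^+|^2}\,dx$ and $\int_\Omega\sqrt{1+|\nabla g^-|^2}\,dx$. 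Since $\tilde g^+-\tilde g^-=g^+-g^-=2r_x$, the boundary terms for $K$ and for $\di_\varphi K$ agree, so the two formulas differ only in their $\Omega$-integrands. The second step is a pointwise comparison: for fixed $\rho$, the map $v\mapsto\Phi_\rho(v):=\sqrt{1+|v+\rho|^2}+\sqrt{1+|v-\rho|^2}$ is convex, even, and strictly convex along each line through the origin (restricted to such a line, each summand is a translated branch of a hyperbola); since $\varphi$ is a contraction, $|\psi|\le1$, so from $\psi m=\tfrac{1+\psi}{2}m+\tfrac{1-\psi}{2}(-m)$ and evenness we get $\Phi_\rho(\psi m)\le\Phi_\rho(m)$, with \emph{strict} inequality at $x$ whenever $m(x)\ne0$ and $\psi(x)\notin\{-1,1\}$. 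Hence $S(\di_\varphi K)\le S(K)$ for any contraction $\varphi$, and equality forces $\varphi'(t_x)\in\{-1,1\}$ for a.e.\ $x\in\Omega$ with $\nabla t_x\ne0$.

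For the converse, I would suppose $\varphi$ is a contraction with $|\varphi'|=1$ a.e.\ and put $N_0=\{\tau\in\R:\varphi'(\tau)\notin\{-1,1\}\}$, so that $\cH^1(N_0)=0$. The coarea formula for $t_{\,\cdot}$ on $\Omega$ gives $|\nabla t_x|=0$ for a.e.\ $x\in t_{\,\cdot}^{-1}(N_0)$, so for a.e.\ $x\in\Omega$ either $m(x)=0$ or $\psi(x)\in\{-1,1\}$, and in both cases $\Phi_{\rho(x)}(\psi(x)m(x))=\Phi_{\rho(x)}(m(x))$. By the two perimeter formulas, $S(\di_\varphi K)=S(K)=2V_{n-1}(K)$ for every $K\in\cK^n_n$, so $\di_\varphi$ is perimeter preserving on convex bodies. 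That $\di_\varphi:\cK^n\to\cL^n$ is well defined, monotonic, measure preserving, and respects $H$-cylinders is immediate from \eqref{ktp} and \eqref{kseg}: monotonicity since $K\subset L$ implies $K_t\subset L_t$; and the remaining two properties by Fubini's theorem, exactly as in the proof that (ii)$\Rightarrow$(i) in Theorem~\ref{thmm7}.

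For the direct implication, Lemma~\ref{ballmap} first shows that the given $\di$ maps balls to balls, so Theorem~\ref{thmm7} applies: \eqref{kseg} holds with $\varphi=\varphi_\di$, hence $\di=\di_{\varphi_\di}$ essentially and the perimeter formula above is valid for $\di$. As $\di$ is perimeter preserving, $S(\di K)=S(K)$, so by the equality clause above $\varphi_\di'(t_x)\in\{-1,1\}$ for a.e.\ $x\in\Omega$ with $\nabla t_x\ne0$, for \emph{every} $K\in\cK^n_n$. I would then test this on the convex bodies $K_{c,d}:=\{x\in\R^n:\ 0\le (x\cdot u)-d\le c-\|x|H\|^2\}$, $c>0$, $d\in\R$, for which $\Omega=\{x\in H:\|x\|^2\le c\}$, $t_x=d+\tfrac12(c-\|x\|^2)$, and $r_x=\tfrac12(c-\|x\|^2)$; here all graphs are Lipschitz with bounded gradient, there is no boundary term, and $\nabla t_x=-x\ne o$ off the origin, so no technical caveats arise, and one obtains $\varphi_\di'\bigl(d+\tfrac12(c-\|x\|^2)\bigr)\in\{-1,1\}$ for a.e.\ $x\in\Omega$. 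Since $t_x$ depends only on $\|x\|$, a radial change of variables then gives $|\varphi_\di'(\tau)|=1$ for a.e.\ $\tau\in[d,d+c/2]$; letting $c\to\infty$ and $d\to-\infty$ gives $|\varphi_\di'(t)|=1$ for a.e.\ $t\in\R$.

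I expect the main obstacle to be the rigorous proof of the perimeter formula for an \emph{arbitrary} convex body $K$ and for the (generally non-convex) set $\di_\varphi K$: one must account for the ``vertical'' portion of the boundary over $\partial\Omega$ together with the possibly unbounded gradients of $g^{\pm}$ and $\tilde g^{\pm}$ near $\partial\Omega$. This is exactly where the structure theory of sets of finite perimeter is needed --- the boundary decomposes into the two graphs, of areas $\int_\Omega\sqrt{1+|\nabla g^+|^2}\,dx$ and $\int_\Omega\sqrt{1+|\nabla g^-|^2}\,dx$, together with the cylinder over $\partial\Omega$, of area $\int_{\partial\Omega}(g^+-g^-)\,d\cH^{n-2}$ --- and it does not arise for the explicit bodies $K_{c,d}$ used in the direct implication.
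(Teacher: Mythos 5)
Your argument is correct and takes a route that is genuinely different from, and in some respects cleaner than, the paper's. The key novelty is that you unify both implications under a single inequality $S(\di_\varphi K)\le S(K)$ for arbitrary contractions $\varphi$ together with an equality characterization, derived from the pointwise Jensen-type comparison $\Phi_\rho(\psi m)\le\Phi_\rho(m)$ (writing $\psi m$ as a convex combination of $\pm m$ and using evenness and strict convexity of $\Phi_\rho$ along lines through the origin). The paper instead argues each direction separately: for the direct implication it uses wedges $K(t,r)$ over shrinking disks $D(t,r)$, divides by $\cH^{n-1}(D(t,r))$, and invokes Lebesgue's differentiation theorem to pin down $\varphi'_\di$ pointwise; for the converse it performs a case analysis on $\nabla t_x$ (via the coarea formula for the exceptional set $\{t_x\in N\}$) and matches the two graph integrands directly, using formulas (\ref{feb181a}) and (\ref{mar102}) that your $\nabla\tilde g^{\pm}=\psi m\pm\rho$ packages in one line. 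Your test bodies for the direct implication are paraboloid caps $K_{c,d}$, chosen so that $\nabla t_x=-x\ne o$ away from the origin and the boundary is globally Lipschitz with no vertical part; the coarea formula then transfers the a.e.\ statement on $\Omega$ to an a.e.\ statement in the range of $t_x$, whereas the paper's wedges $K(t,r)$ give the pointwise eikonal equation at $t$ by a differentiation limit as $r\to 0$. You have correctly identified that the nontrivial technical content --- justifying the perimeter decomposition into vertical and graph parts for arbitrary $K\in\cK^n_n$ and for the generally non-convex $\di_\varphi K$, including the a.e.\ chain rule for $\varphi\circ t_{\,\cdot}$ --- coincides with what the paper does in the converse half (splitting $\partial K$ and $\partial(\di K)$ into $K_1,K_2$ and $(\di K)_1,(\di K)_2$ and treating them separately). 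Your route has the modest benefit that the direct implication becomes self-contained, since both $K_{c,d}$ and $\di_\varphi K_{c,d}$ have Lipschitz boundary and bounded gradients, so the full structure theory is needed only for the converse; and it makes explicit a perimeter inequality $S(\di_\varphi K)\le S(K)$, with equality iff $\varphi'(t_x)\in\{\pm1\}$ a.e.\ on $\{\nabla t_x\ne o\}$, that the paper does not state.
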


\begin{proof}
We may assume that $u=e_n$ and write $H^{\perp}=\langle e_n\rangle$ for the $x_n$-axis. If $x\in H=e_n^{\perp}$, write $x=(x_1,\dots,x_{n-1})$. If $t\in \R$ and $r>0$, let $D(t,r)=B(te_1,r)\cap e_n^{\perp}$.

Suppose that $t>0$. For $0<r<t$, define
$$K(t,r)=\{x+x_ne_n: x\in D(t,r),~~ 0\le x_n\le 2x_1\}.$$
Then $K(t,r)\in {\mathcal{K}}^n_n$ and if $x\in D(t,r)$, then $x+x_1e_n$ is the midpoint of $K(t,r)\cap (\langle e_n\rangle+x)$. Lemma~\ref{ballmap} implies that $\di$ maps balls to balls, so (\ref{kseg}) holds for $\di$.  Hence
\begin{equation}\label{did}
\di K(t,r)=\{x+x_ne_n: x\in D(t,r),~~ \varphi_{\di}(x_1)-x_1\le x_n\le \varphi_{\di}(x_1)+x_1\},
\end{equation}
essentially. Since $\varphi_{\di}$ is Lipschitz by Lemma~\ref{lemm5}, $\di K(t,r)$ is a set of finite perimeter; see \cite[Proposition~3.62]{AFP}. From (\ref{did}) (or see (\ref{kseg})), we have
$$(\di K(t,r))\cap (\langle e_n\rangle+x)=(K(t,r)\cap (\langle e_n\rangle+x))+(\varphi_\di(x_1)-x_1)e_n,$$
up to a set of ${\mathcal{H}}^{1}$-measure zero, for ${\mathcal{H}}^{n-1}$-almost all $x\in D(t,r)$.  Also, $\|\nabla(2x_1)\|=2$, $\|\nabla 0\|=0$, and $\|\nabla (\varphi_{\di}(x_1)\pm x_1)\|=|\varphi'_{\di}(x_1)\pm 1|$.  As $\di$ is perimeter preserving on convex bodies, $K(t,r)$ and $\di(K(t,r))$ have equal perimeters, so we obtain (see e.g.~\cite[p.~101]{EG})
\begin{eqnarray*}
\lefteqn{\int_{D(t,r)}\sqrt{1+2^2}\,dx+
\int_{D(t,r)}\sqrt{1+0^2}\,dx}\\
&=&\int_{D(t,r)}\sqrt{1+(\varphi'_{\di}(x_1)+ 1)^2}\,dx+\int_{D(t,r)}\sqrt{1+(\varphi'_{\di}(x_1)- 1)^2}\,dx.
\end{eqnarray*}
Dividing the previous equation by ${\mathcal{H}}^{n-1}(D(t,r))$ and taking the limit as $r\to 0$, Lebesgue's differentiation theorem (see e.g.~\cite[Theorem~8.8]{Rud87}) yields
$$\sqrt{5}+1=\sqrt{1+(\varphi'_{\di}(t)+ 1)^2}+\sqrt{1+(\varphi'_{\di}(t)- 1)^2}$$
for ${\mathcal{H}}^{1}$-almost all $t\in (0,\infty)$.  It is easy to check that the only solutions of the previous equation are $\varphi'_{\di}(t)=\pm 1$ for ${\mathcal{H}}^{1}$-almost all $t\in (0,\infty)$.

The above argument can be repeated with $t<0$, $0<r<-t$, and
$$K'(t,r)=\{x+te_n: x\in D(t,r),~~ -2x_1\le x_n\le 0\}.$$
This yields that $\varphi'_{\di}(t)=\pm 1$ for ${\mathcal{H}}^{1}$-almost all $t\in (-\infty,0)$, so the first statement in the theorem is proved.

Conversely, suppose that $\varphi$ is a contraction such that $|\varphi'(t)|=1$ for $t\in \R\setminus N$, where  ${\mathcal{H}}^{1}(N)=0$. Then (\ref{kseg}) defines a map $\di:\cK^n\rightarrow \cL^n$. Theorem~\ref{thmm7} implies that $\di$ is monotonic, measure preserving, and respects $H$-cylinders on $\cK^n_n$, but it is clear that these properties hold on $\cK^n$.

It remains to prove that $\di$ preserves perimeter on convex bodies.  Let $K\in \cK_n^n$.  We may assume without loss of generality that $\di:\cK^n\rightarrow \cL^n$ is defined by (\ref{kseg}) for all $x\in H$ and without the exceptional sets of $\cH^1$-measure zero.  Indeed, the difference, by Fubini's theorem, is a set of $\cH^n$-measure zero, which does not change perimeter (see \cite[Exercise~12.16]{Mag}).  Let
$$
f^+(x)=\max_{x+te_n\in K}t,\quad f^-(x)=\min_{x+te_n\in K}t,\quad g^+(x)=\max_{x+te_n\in \di K}t,\quad \text{ and }\quad g^-(x)=\min_{x+te_n\in \di K}t
$$
be the functions from $K|H$ to $\R$ whose graphs are the top and bottom parts of $\partial K$ and $\partial(\di K)$.   With the already established notation from Lemma~\ref{lemm11}, we have
\begin{equation}\label{mar101}
t_x=(f^+(x)+f^-(x))/2
\end{equation}
and
\begin{equation}\label{mar64}
g^\pm(x)=\varphi(t_x)\pm (f^+(x)-f^-(x))/2.
\end{equation}

Put $\Omega=\relint(K|H)$. For $E\subset\R^n$, let
$$E_1=(\partial E)\cap \left(((K|H)\setminus\Omega)+H^{\perp}\right)\quad{\text{and}}\quad E_2=(\partial E)\cap \left(\Omega+H^{\perp}\right).$$
Clearly $\partial K$ (or $\partial(\di K)$) is the disjoint union of $K_1$ and $K_2$ (or $(\di K)_1$ and $(\di K)_2$, respectively).  We show below that
\begin{equation}\label{mar131}
\cH^{n-1}((\di K)_i)=\cH^{n-1}(K_i)
\end{equation}
for $i=1,2$.  Assuming this is true, the proof is completed as follows.  Since $K$ is a convex body, it has Lipschitz boundary. The functions $\pm f^\pm$ are convex and hence locally Lipschitz on $\Omega$ (see \cite[Theorem~1.5.3]{Sch14}), so it follows from (\ref{mar101}) and (\ref{mar64}) that $g^\pm$ are also locally Lipschitz on $\Omega$.  From this and the fact that $\varphi$ is Lipschitz, it follows that $\di K$ also has Lipschitz boundary.  Note that $\partial K=\partial (\inte K)$, $\partial (\di K)=\partial(\inte (\di K))$, and $\cH^{n-1}(\partial(\di K))=\cH^{n-1}(\partial K)<\infty$ by (\ref{mar131}).  Then it follows from  \cite[Equation~(3.63)]{AFP} and \cite[Proposition~3.62)]{AFP}, applied to $\inte K$ and $\inte (\di K)$, and (\ref{mar131}), that
$$
S(\di K)=\cH^{n-1}(\partial (\di K))=\cH^{n-1}(\partial K)=S(K),$$
as required.

To prove (\ref{mar131}) for $i=1$, note that because $(K|H)\setminus \Omega$ is the boundary of a convex body in $H$, each $x\in (K|H)\setminus \Omega$ has a relative neighborhood $U\subset ((K|H)\setminus \Omega)$ such that there is a Lipschitz bijection from $U+H^{\perp}$ to $\R^{n-1}$, with Lipschitz inverse, that maps vertical lines isometrically to vertical lines.  Then the desired result follows directly from the area formula \cite[Theorem~1, p.~96]{EG}, the fact that
$$\cH^{1}((\di K)\cap (H^{\perp}+x))=\cH^{1}(K\cap (H^{\perp}+x))$$
for all $x\in (K|H)\setminus\Omega$, and Fubini's theorem in $\R^{n-1}$.

It therefore remains to prove that $\cH^{n-1}((\di K)_2)=\cH^{n-1}(K_2)$.  To this end, let $M=\{x\in \Omega: t_x\in N\}$. Applying the coarea formula \cite[p.~112]{EG} to the locally Lipschitz function $x\mapsto t_x$ gives
$$
\int_{M} |\nabla t_x| dx=\int_N \cH^{n-1}(\{x:t_x=s\})ds=0,
$$
as $\cH^1(N)=0$. Therefore $\cH^{n-1}(\{x\in M: \nabla t_x\ne o\})=0$.  As all the functions $f^{\pm}$ and $g^{\pm}$ are locally Lipschitz on $\Omega$, their gradients exist for $\cH^{n-1}$-almost all $x\in \Omega$.   Using (\ref{mar101}) and (\ref{mar64}), a direct calculation shows that
\begin{equation}\label{feb181a}
\nabla g^{\pm}(x)=\begin{cases}
\nabla f^{\pm}(x),& {\text{if $\varphi'(t_x)=1$ and $\nabla t_x\ne o$,}}\\
-\nabla f^{\mp}(x),& {\text{if $\varphi'(t_x)=-1$ and $\nabla t_x\ne o$.}}
\end{cases}
\end{equation}
Using the fact that $\varphi$ is a contraction, for $x\in \Omega$ with $\nabla t_x=0$ and $h\in H$ with $\|h\|$ sufficiently small, we have
$$\frac{|\varphi(t_{x+h})-\varphi(t_x)|}{\|h\|}\le   \frac{|t_{x+h}-t_x|}{\|h\|}=   \frac{|t_{x+h}-t_x-\langle \nabla t_x,h\rangle |}{\|h\|}\to 0
$$
as $h\to o$, implying that $\nabla \varphi(t_x)$ exists and is zero.  Then, taking gradients in (\ref{mar101}) and (\ref{mar64}), we obtain
\begin{equation}\label{mar102}
\nabla g^{\pm}(x)=\pm \nabla f^{+}(x)=\mp \nabla f^{-}(x), \quad {\text{if $\nabla t_x=o$,}}
\end{equation}
for $\cH^{n-1}$-almost all $x\in \Omega$. Using (\ref{feb181a}), (\ref{mar102}), and \cite[p.~101]{EG}, we get
\begin{eqnarray*}
\cH^{n-1}((\di K)_2)&=&\int_{\Omega}\left(\left(1+\|\nabla g^+(x)\|^2\right)^{1/2}+\left(1+\|\nabla g^-(x)\|^2\right)^{1/2}\right)\,dx\\
&=&\int_{\Omega}\left(\left(1+\|\nabla f^+(x)\|^2\right)^{1/2}+
\left(1+\|\nabla f^-(x)\|^2\right)^{1/2}\right)\,dx= \cH^{n-1}(K_2),
\end{eqnarray*}
as required.
\end{proof}

The equation $|\varphi'(t)|=1$, on a given domain and usually stated with boundary conditions, is a special case of the {\em eikonal equation}; see, for example, \cite[p.~47]{BCD}.  In addition to the four functions $\varphi(t)=t$, $-t$, $|t|$, and $-|t|$ in Corollary~\ref{corlemExtend}, there are infinitely many other solutions $\mathcal{H}^1$-almost everywhere on $\R$, including the function in the following example.

\begin{ex}\label{thm9exPolComp}
{\rm  Let $H=u^{\perp}$, $u\in S^{n-1}$, be oriented with $u\in H^+$. Define the contraction $\varphi(t)=\min_{k\in \Z}|t-k|$ for $t\in \R$ and let $\di:{\mathcal K}^n\rightarrow {\mathcal{L}}^n$ be defined by \eqref{kseg}.  Then $|\varphi'(t)|=1$ for ${\mathcal{H}}^{1}$-almost all $t\in \R$.  By Theorem~\ref{thmm9convex}, $\di$ is monotonic, measure preserving, respects $H$-cylinders, and is perimeter preserving on convex bodies.  By Lemma~\ref{ballmap}, $\di$ maps balls to balls and hence, by Corollary~\ref{cor6a}, it is invariant on $H$-symmetric sets.  However, $\di$ is not essentially equal to $\Id$, $\dagger$, $\di_{P_H}$, or $\di_{P_H}^{\dagger}$.
\qed}
\end{ex}

\begin{rem}\label{remthmm9convex}
{\rm  The functions appearing in Theorem~\ref{thmm9convex}, i.e., Lipschitz functions $\varphi: \R\to \R$ such that $|\varphi'(t)|=\pm 1$ almost everywhere, may be non-differentiable at an uncountable number of points.  In fact, given any set $N$ such that ${\cH}^1(N)=0$, there is a function $\varphi$ of this type such that $\varphi'$ does not exist at any point in $N$.  To see this, first note that by \cite[Theorem~1]{Goff}, there is a Borel set $E$ such that at each point in $N$, the upper Lebesgue density of $E$ is 1 and the lower Lebesgue density of $E$ is 0. (To construct $E$, let $(G_k)$ be a sequence of open sets such that $N\subset G_{k+1}\subset G_k$, ${\cH}^1(G_1)\le 1$, and for each component $C$ of $G_k$, ${\cH}^1(C\cap G_{k+1})\le (1/k){\cH}^1(C)$.  Then let $E=\cup_k(G_{2k-1}\setminus G_{2k})$.)  Now define
$$\varphi(t)=\int_0^t (2(1_E(s))-1)\,ds$$
for $t\in\R$. Then $\varphi$ is Lipschitz with $\varphi'(t)=1$ or $-1$ at each density point of $E$ or $\R\setminus E$, respectively, and hence almost everywhere by Lebesgue's density theorem, and it follows directly from the just-mentioned properties of $E$ that $\varphi'$ does not exist at any point in $N$.}
\end{rem}

\begin{lem}\label{lemmar9}
Let $H=u^{\perp}$, $u\in S^{n-1}$, and let ${\mathcal{K}}^n_n\subset{\mathcal{E}}\subset {\mathcal{L}}^n$, where ${\mathcal{E}}$ is closed under intersections with $H$-symmetric spherical cylinders. If $\di:{\mathcal{E}}\rightarrow {\mathcal{L}}^n$ is monotonic, measure preserving, and invariant on $H$-symmetric spherical cylinders, then $\di$ respects $H$-cylinders.
\end{lem}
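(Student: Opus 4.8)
The plan is to exhaust $K$ from inside by its intersections with finite $H$-symmetric spherical cylinders, trap each such piece inside the prescribed $H$-cylinder using monotonicity and invariance, and then use the measure-preserving property to recover $\di K$, up to a set of $\cH^n$-measure zero, as the union of these trapped pieces.

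So let $K\in{\mathcal E}$ with $K\subset Z:=(B(x,r)\cap H)+H^{\perp}$. First I would dispose of the degenerate case: if $B(x,r)\cap H$ has empty interior relative to $H$, then $Z$ is empty or a line, so $\cH^n(K)=0$, whence $\cH^n(\di K)=\cH^n(K)=0$ by the measure-preserving property and there is nothing to prove. Hence I may assume $B(x,r)\cap H=B(p,r')\cap H$ for some $p\in H$ and $r'>0$, so that $Z=B_H+H^{\perp}$ with $B_H:=B(p,r')\cap H$. For $m\in\N$ I would then consider $C_m=B_H+m(B^n\cap H^{\perp})$; each $C_m$ is an $H$-symmetric spherical cylinder and also a convex body, so $C_m\in{\mathcal K}^n_n\subset{\mathcal E}$ and $\di C_m=C_m$ by invariance on $H$-symmetric spherical cylinders. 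Note that $C_m\subset C_{m+1}\subset Z$ and $\bigcup_{m\in\N}C_m=B_H+H^{\perp}=Z\supset K$.

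Next I would set $K_m=K\cap C_m$. Since ${\mathcal E}$ is closed under intersections with $H$-symmetric spherical cylinders, $K_m\in{\mathcal E}$, and from $K_m\subset C_m$ and monotonicity, $\di K_m\subset\di C_m=C_m\subset Z$. The sets $K_m$ increase to $K$, so by monotonicity the sets $\di K_m$ increase too, and $D:=\bigcup_{m\in\N}\di K_m\subset Z$; moreover $K_m\subset K$ gives $\di K_m\subset\di K$, hence $D\subset\di K$. The measure-preserving property gives $\cH^n(\di K_m)=\cH^n(K_m)$, and since $\cH^n(K)<\infty$, continuity of $\cH^n$ from below yields
$$\cH^n(D)=\lim_{m\to\infty}\cH^n(\di K_m)=\lim_{m\to\infty}\cH^n(K_m)=\cH^n(K)=\cH^n(\di K).$$
As $D\subset\di K$ and the two sets have the same finite $\cH^n$-measure, $\cH^n(\di K\setminus D)=0$, so $\di K\subset Z$ up to a set of $\cH^n$-measure zero; since such sets are ignored throughout, $\di$ respects $H$-cylinders.

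The main point requiring care will be the direction of the inclusions: from $K_m\subset K$ one only obtains $\di K_m\subset\di K$, which by itself is too weak, and it is precisely the measure-preserving property together with the finiteness of $\cH^n(K)$ that forces the increasing union $D$ of the pieces $\di K_m$ to exhaust $\di K$ modulo a null set. (The standing assumption ${\mathcal K}^n_n\subset{\mathcal E}$ enters only to guarantee $C_m\in{\mathcal E}$, so that $\di C_m$ is defined and equal to $C_m$.)
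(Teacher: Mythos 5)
Your proof is correct and follows essentially the same strategy as the paper's: exhaust $K$ by its intersections $K_m=K\cap C_m$ with finite $H$-symmetric spherical cylinders $C_m$, use monotonicity and invariance to trap each $\di K_m$ inside the target $H$-cylinder, and then invoke the measure-preserving property together with $\cH^n(K)<\infty$ to show the union of the $\di K_m$ fills out $\di K$ up to a null set. The only cosmetic differences are that you handle the degenerate ball case explicitly and use the height $m$ with continuity from below, whereas the paper selects heights $s_m$ with $\cH^n(K\setminus K_m)\le 1/m$ directly; these are the same idea.
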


\begin{proof}
In the proof, we ignore sets of $\cH^n$-measure zero.  If $r>0$ and $x\in H$, let $C(x,r)$ be the infinite spherical cylinder of radius $r$ and axis $H^{\perp}+x$.  Suppose that $E\in{\mathcal{E}}$ and $E\subset C(x,r)$.  Since $E\in\cL^n$, we can choose $s_m>0$, $m\in \N$, such that if $C_m=(C|H)+s_m[-u,u]$ and $E_m=E\cap C_m$, then $\cH^n(E\setminus E_m)\le 1/m$.  As $E_m\in {\mathcal{E}}$, $\di$ is monotonic, and $C_m$ is an $H$-symmetric spherical cylinder, we have $\di E_m\subset \di C_m=C_m$ and hence $F=\cup_m\di E_m\subset C(x,r)$.  Also, $\di E_m\subset \di E$ for $m\in \N$, implying that $F\subset \di E$. Now
$$\cH^n(F)\ge \cH^n(\di E_m)=\cH^n(E_m)\ge \cH^n(E)-1/m=\cH^n(\di E)-1/m$$
for each $m$, so $\cH^n(F)\ge \cH^n(\di E)$.  Thus $\di E=F\subset C(x,r)$, as required.
\end{proof}

All seven properties of maps $\di:{\mathcal{E}}\subset{\mathcal{L}}^n\to {\mathcal{L}}^n$ listed in Section~\ref{Properties} are shared by $\di_{P_H}$ and the map $\di$ of Example~\ref{thm9exPolComp}, so a further property is needed to distinguish $\di_{P_H}$.  When ${\mathcal{E}}$ contains $H$-symmetric unions of two disjoint balls, we can take inspiration from Example~\ref{Marex} and use the new property in the next result.  Note that the property is much weaker than invariance on all $H$-symmetric sets.

\begin{lem}\label{lemunion}
Let $H=u^{\perp}$, $u\in S^{n-1}$, and let ${\mathcal{K}}^n_n\subset{\mathcal{E}}\subset {\mathcal{L}}^n$, where ${\mathcal{E}}$ is closed under intersections with $H$-symmetric spherical cylinders.  If $\di:{\mathcal{E}}\rightarrow {\mathcal{L}}^n$ is monotonic, measure preserving, and defined and invariant on $H$-symmetric unions of two disjoint balls, then $\di$ is invariant on $H$-symmetric spherical cylinders and hence respects $H$-cylinders.
\end{lem}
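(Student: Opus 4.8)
The plan is to reduce the whole statement to the single claim that $\di$ is invariant on $H$-symmetric spherical cylinders; once that is established, the assertion that $\di$ respects $H$-cylinders is immediate from Lemma~\ref{lemmar9}, whose remaining hypotheses (monotonicity, measure preservation, and closure of ${\mathcal{E}}$ under intersections with $H$-symmetric spherical cylinders) are all in force. So one fixes an $H$-symmetric spherical cylinder $K=(B(x,r)\cap H)+s(B^n\cap H^{\perp})$. Since $K$ is a Minkowski sum of compact convex sets it is a convex body, hence $K\in{\mathcal{K}}^n_n\subset{\mathcal{E}}$ and $\di K$ is defined; moreover, writing $B(x,r)\cap H=B(x|H,\rho_0)\cap H$ with $\rho_0=\sqrt{r^2-\operatorname{dist}(x,H)^2}$ (the degenerate case $\operatorname{dist}(x,H)\ge r$ makes $K$ an $\cH^n$-null set, trivially handled since $\di$ is monotonic and measure preserving), I may assume the axis of $K$ meets $H$, i.e.\ $K=(B(z,\rho_0)\cap H)+s(B^n\cap H^{\perp})$ with $z\in H$ and $\rho_0,s>0$.

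The key step is to construct a countable family of $H$-symmetric unions of two disjoint balls, each contained in $K$, whose union fills $K$ up to an $\cH^n$-null set. For $y\in H$ and $\tau>\rho>0$, set $U(y,\tau,\rho)=B(y+\tau u,\rho)\cup B(y-\tau u,\rho)$. Because $(y+\tau u)^{\dagger}=y-\tau u$, this set is $H$-symmetric, and the two balls are disjoint precisely because $\tau>\rho$; moreover $U(y,\tau,\rho)\subset K$ exactly when $\|y-z\|+\rho\le\rho_0$ and $\tau+\rho\le s$. Enumerating all triples $(y,\tau,\rho)$, with $y$ of rational coordinates and $\tau,\rho$ rational, that satisfy these two inequalities produces a sequence $(U_m)$ of such sets. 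A routine verification shows that every $p\in\inte K$ with $p\cdot u\neq 0$ lies in some $U_m$: pick $\rho$ small, then a rational $y$ near $p|H$ and a rational $\tau$ near $|p\cdot u|>0$, the strict positivity of $|p\cdot u|$ giving the room needed for $\tau>\rho$ while the three constraints $\tau>\rho$, $\tau+\rho\le s$, $\|y-z\|+\rho\le\rho_0$ stay simultaneously satisfiable. Hence $K\setminus\bigcup_m U_m\subseteq\bd K\cup(K\cap H)$, which has $\cH^n$-measure zero, so $\bigcup_m U_m=K$ essentially.

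The conclusion then follows as in the proofs of Lemmas~\ref{lemm6} and~\ref{lemmar9}. By hypothesis $\di U_m=U_m$ for every $m$, and by monotonicity $\di U_m\subseteq\di K$ essentially; since the union is over countably many sets, $\bigcup_m U_m=\bigcup_m\di U_m\subseteq\di K$ essentially. As $\bigcup_m U_m=K$ essentially, $\di K$ essentially contains $K$, and since $\cH^n(\di K)=\cH^n(K)<\infty$ by the measure-preserving property, this forces $\di K=K$ essentially. This proves invariance on $H$-symmetric spherical cylinders, whereupon Lemma~\ref{lemmar9} completes the argument.

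I expect the main obstacle to be the covering step: fitting countably many \emph{genuinely} disjoint pairs of balls inside $K$ while still exhausting $K$ up to measure zero. The requirement $\tau>\rho$ (disjointness rather than mere tangency) is exactly what keeps these pairs away from the central slice $\{p:p\cdot u=0\}$ of $K$; the saving observation is that this slice, together with $\bd K$, is $(n-1)$-dimensional and hence $\cH^n$-negligible, so the approximation from inside still succeeds. The rest is elementary bookkeeping with the three inequalities above.
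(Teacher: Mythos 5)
Your proof is correct and follows essentially the same route as the paper's: cover $\inte K\setminus H$ by countably many $H$-symmetric unions of two disjoint balls contained in $K$, use invariance together with monotonicity to get $K\subset\di K$ essentially, invoke measure preservation to conclude $\di K=K$ essentially, and then apply Lemma~\ref{lemmar9}. The paper simply asserts the existence of the covering sequence $(U_m)$, whereas you supply the explicit rational parametrization; this is a welcome elaboration but not a different argument.
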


\begin{proof}
Let $C=(B(x,r)\cap H)+s(B^n\cap H^{\perp})$, where $r,s>0$, be an $H$-symmetric spherical cylinder.  Let $A=\inte(C\setminus H)$.  Choose $H$-symmetric unions $U_m$ of two disjoint balls, $m\in \N$, such that $A=\cup_mU_m$.  By monotonicity and invariance on $H$-symmetric unions of two disjoint balls,
$$A=\cup_mU_m=\cup_m\di U_m\subset \di C.$$
Therefore, $C\subset \di C$, essentially.  The measure-preserving property of $\di$ now implies that $\di C=C$, essentially.  Thus $\di$ is invariant on $H$-symmetric spherical cylinders and hence respects $H$-cylinders by Lemma~\ref{lemmar9}.
\end{proof}

\begin{lem}\label{lemExtend}
Let $H=u^{\perp}$, $u\in S^{n-1}$, be oriented with $u\in H^+$ and let $\cE={\mathcal{C}}^n$ or ${\mathcal{L}}^n$. Suppose that $\di:\cE\rightarrow {\mathcal{L}}^n$ is monotonic, measure preserving, maps balls to balls, and invariant on $H$-symmetric unions of two disjoint balls.   Let $\varphi_{\di}:\R\to \R$ be the function from Lemma~\ref{lemm5}. Then $\varphi_{\di}(t)=t$, $-t$, $|t|$, or $-|t|$, if and only if $\di$ essentially equals $\mathrm{Id}$, $\dagger$, $\di_{P_H}$, or $\di_{P_H}^\dagger$, respectively.
\end{lem}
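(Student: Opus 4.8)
The plan is to first strengthen the hypotheses so that the machinery of Section~\ref{General sets} applies, then establish the "only if" direction by testing on balls, and finally handle the "if" direction by passing from balls to arbitrary members of $\mathcal{E}$ in two steps, using the two-ball invariance to control the behaviour in $H^-$.

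First I would invoke Lemma~\ref{lemunion}: since $\mathcal{C}^n$ and $\mathcal{L}^n$ are each closed under intersections with $H$-symmetric spherical cylinders, and $\di$ is monotonic, measure preserving, and invariant on $H$-symmetric unions of two disjoint balls, $\di$ respects $H$-cylinders. Hence Lemma~\ref{lemm5} applies and the contraction $\varphi_\di$ satisfying \eqref{ball} exists. The "only if" direction is then immediate: applying $\di$ to $B(x+tu,r)$, comparing \eqref{ball} with the explicit descriptions of $\mathrm{Id}$, $\dagger$, $\di_{P_H}$, $\di_{P_H}^\dagger$ on that ball (using the elementary identity $\di_{P_H}B(x+tu,r)=B(x+|t|u,r)$), and noting that two balls agreeing up to a null set and of equal radius have equal centres, one reads off that $\varphi_\di(t)$ is $t$, $-t$, $|t|$, or $-|t|$, respectively.

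For the converse I would treat the case $\varphi_\di(t)=|t|$ in detail; the cases $\varphi_\di(t)=t$ and $\varphi_\di(t)=-t$ are easier (and need neither the two-ball invariance nor the orientation) and $\varphi_\di(t)=-|t|$ is symmetric to $|t|$. The starting point is that $\varphi_\di(t)=|t|$ and \eqref{ball} give $\di B=\di_{P_H}B$, essentially, for every ball $B$. \textbf{Step 1.} I claim $\di L=\di_{P_H}L$, essentially, for every $L\in\mathcal{E}$ with $\mathcal{H}^n(L\setminus\inte L)=0$. Since $\di$ and $\di_{P_H}$ are measure preserving, it suffices to show $\di_{P_H}L\subseteq\di L$, essentially. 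Using $\di_{P_H}L=((L\cup L^\dagger)\cap H^+)\cup((L\cap L^\dagger)\cap H^-)$ and ignoring the null set $H$: if $y\in(L\cup L^\dagger)\cap H^+$ then, for a.e.\ such $y$, either $y\in\inte L$, in which case a small ball $B(y,\delta)\subseteq L$ satisfies $\di B(y,\delta)=B(y,\delta)$ and so $y\in\di L$ by monotonicity, or $y^\dagger\in\inte L$, in which case $B(y^\dagger,\delta)\subseteq L$ satisfies $\di B(y^\dagger,\delta)=B(y,\delta)$ and again $y\in\di L$; while if $y\in(L\cap L^\dagger)\cap H^-$ then, for a.e.\ such $y$, both $y,y^\dagger\in\inte L$, so for $\delta<|y\cdot u|$ the set $B(y,\delta)\cup B(y^\dagger,\delta)$ is an $H$-symmetric union of two disjoint balls contained in $L$, whence $y\in B(y,\delta)\cup B(y^\dagger,\delta)=\di\big(B(y,\delta)\cup B(y^\dagger,\delta)\big)\subseteq\di L$. \textbf{Step 2.} For arbitrary $K\in\mathcal{E}$ and $\eta>0$, choose $L\in\mathcal{E}$ with $K\subseteq L$, $\mathcal{H}^n(L\setminus K)<\eta$, and $\mathcal{H}^n(L\setminus\inte L)=0$: take $L$ open when $\mathcal{E}=\mathcal{L}^n$, and $L=K+\varepsilon B^n$ when $\mathcal{E}=\mathcal{C}^n$, with $\varepsilon$ small and outside the (at most countable) set of values for which $\{x:d(x,K)=\varepsilon\}$ has positive measure, so that $\partial L\subseteq\{x:d(x,K)=\varepsilon\}$ is $\mathcal{H}^n$-null. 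Then monotonicity of $\di$, Step~1, and monotonicity and the measure-preserving property of $\di_{P_H}$ give $\di K\subseteq\di L=\di_{P_H}L$ essentially and $\mathcal{H}^n(\di_{P_H}L\setminus\di_{P_H}K)=\mathcal{H}^n(L\setminus K)<\eta$, hence $\mathcal{H}^n(\di K\setminus\di_{P_H}K)<\eta$; letting $\eta\to0$ yields $\di K\subseteq\di_{P_H}K$ essentially, and then $\di K=\di_{P_H}K$ essentially by the measure-preserving property of $\di$.

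The main obstacle I anticipate is Step~2 when $\mathcal{E}=\mathcal{C}^n$, since $\di$ is not defined on open sets in that case; this is why the outer approximation must be carried out by the compact thickenings $K+\varepsilon B^n$, and why one genuinely uses the (otherwise routine) fact that $\partial(K+\varepsilon B^n)$ is $\mathcal{H}^n$-null for all but countably many $\varepsilon$. Everything else amounts to bookkeeping of the images under $\di$ of balls and of $H$-symmetric unions of two disjoint balls.
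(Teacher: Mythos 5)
The overall strategy — strengthen to "respects $H$-cylinders" via Lemma~\ref{lemunion}, read off $\varphi_\di$ from balls for the easy direction, and then prove $\di=\di_{P_H}$ when $\varphi_\di(t)=|t|$ by separating the $H^+$ and $H^-$ parts (single-ball images for $H^+$, two-ball invariance for $H^-$) followed by outer approximation — is the same route the paper takes, though you approximate using sets $L$ with $\cH^n(L\setminus\inte L)=0$ and a measure estimate, whereas the paper works explicitly with countable unions of balls and then takes decreasing limits using the set identity $\cap_m\di_{P_H}E_m=\di_{P_H}(\cap_m E_m)$.

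There is, however, a genuine gap in Step~1. You write "a small ball $B(y,\delta)\subseteq L$ satisfies $\di B(y,\delta)=B(y,\delta)$ and so $y\in\di L$ by monotonicity." But both the identity \eqref{ball} and the monotonicity of $\di$ hold only up to sets of $\cH^n$-measure zero, so what you actually obtain is $\cH^n\bigl(B(y,\delta)\setminus\di L\bigr)=0$, and this does not imply the pointwise statement $y\in\di L$ — the exceptional null set may well contain $y$, and it may vary with $y$ and $\delta$. The same remark applies to your two-ball argument on $H^-$. The claim "$\di_{P_H}L\subseteq\di L$ essentially" is still correct, but the right way to get it is to cover the relevant region by countably many balls (e.g.\ rational balls, or a Lindel\"of subcover): each such ball is essentially contained in $\di L$, and a countable union of essential containments yields an essential containment of the union, hence of $\di_{P_H}L$. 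This is precisely why the paper proves the statement first for countable unions of balls — the inclusions \eqref{eq1lem7}, \eqref{eq2lem7} and \eqref{eq3lem7} are all assembled from countably many essential containments — and only then passes to general sets. Once this repair is made, your Step~1 is correct, and your Step~2 (outer approximation with the measure estimate $\cH^n(\di_{P_H}L\setminus\di_{P_H}K)=\cH^n(L\setminus K)$, including the choice of $\varepsilon$ so that $\partial(K+\varepsilon B^n)$ is $\cH^n$-null) is a clean and valid alternative to the paper's decreasing-sequence argument.
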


\begin{proof}
By Lemma~\ref{lemunion}, our assumptions on $\di$ imply that it also respects $H$-cylinders, allowing previous results to be applied.

If $\di$ is given, it is easy to check that $\varphi_{\di}$ has the appropriate form, by applying $\di$ to balls and using (\ref{ball}).

It will suffice show that if $\varphi_{\di}=|t|$, then $\di=\di_{P_H}$.  Indeed, the case when $\varphi_{\di}=-|t|$ then follows by applying the previous case to $\di^\dagger$. The cases when $\varphi_{\di}=\pm t$ are even simpler, not requiring the assumption that $\di$ is invariant on $H$-symmetric unions of two disjoint balls, but only that $\di$ respects $H$-cylinders.

In the rest of the proof, we shall ignore sets of ${\mathcal{H}}^n$-measure zero.

Assume initially that $E\in \cE$ is a countable union of balls, $E=\cup_{j=1}^\infty B_j$. Let $I^-$ be the set of all indices $j$ for which the center of $B_j$ lies in $H^-$, and put $I^+=\N\setminus I^-$.
By (\ref{ball}), $\di B_j=B_j$ for $j\in I^+$. Since $\di$ is monotonic, we obtain
\begin{equation}\label{eq1lem7}
\cup_{j\in I^+}B_j =\cup_{j\in I^+}\di B_j \subset \di E.
\end{equation}
For $j\in I^-$ we have  $\di B_j=B_j^\dagger$ by (\ref{ball}), so a similar argument gives
\begin{equation}\label{eq2lem7}
(\cup_{j\in I^-} B_j)^\dagger\subset \di E.
\end{equation}
We claim that
\begin{equation}\label{eq3lem7}
E\cap E^\dagger\subset\di E.
\end{equation}
To see this, note that
$$E\cap E^\dagger=\cup_{j,k=1}^\infty (B_j\cap B_k^\dagger)=\cup_{j\le k}^\infty E_{jk},$$
where $E_{jk}=(B_j\cap B_k^\dagger)\cup (B_k\cap B_j^\dagger)$ is an $H$-symmetric union of (at most) two compact convex sets.  It will therefore suffice to show that $\inte E_{jk}\subset\di E$ for $j,k\in \N$.  To this end, let $C_1,C_2,\ldots$ be balls such that
$$\inte (E_{jk}\cap H^+)=\cup_{i=1}^\infty C_i.$$
As $E_{jk}$ is $H$-symmetric, this implies that
$$C_i\cup C_i^\dagger\subset(\inte E_{jk})\setminus H\subset E\cap E^\dagger\subset E.$$
Hence,
$$\inte E_{jk}\subset
\cup_{i=1}^\infty (C_i\cup C_i^\dagger)=\cup_{i=1}^\infty \di(C_i\cup C_i^\dagger)\subset \di E,$$
where the equality is justified as $\di$ is invariant on $H$-symmetric unions of two disjoint balls, and the final containment follows by the monotonicity of $\di$.  This proves (\ref{eq3lem7}).

By (\ref{eq1lem7}), (\ref{eq2lem7}), and (\ref{eq3lem7}), if we can prove the first containment in
\begin{equation}
\label{eq:includiPh}
\di_{P_H} E\subset
\left(\cup_{j\in I_+} B_j\right)\cup\left(\cup_{j\in I_-}B_j\right)^\dagger\cup (E\cap E^\dagger) \subset \di E,
\end{equation}
the measure-preserving properties of $\di$ and $\di_{P_H}$ will show that $\di E=\di_{P_H}E$.  To prove the inclusion in question, fix $x\in (\di_{P_H}E)\setminus H^-$. Then $\{x,x^\dagger\}\cap E\ne\emptyset$ and there is a $j\in \N$ with $x\in B_j$ or  $x\in B_j^\dagger$.
Suppose that $x\in B_j$. If $j\in I^+$, the first inclusion in \eqref{eq:includiPh} holds trivially, and if $j\in I^-$ it holds as
$x\in B_j\setminus H^-\subset B_j^\dagger$. Similar arguments can be used if $x\in B_j^\dagger$. When
$x\in (\di_{P_H}E)\cap H^-$, we have $\{x,x^\dagger\}\subset E$, so
$x\in E\cap E^\dagger$.
Concluding, $\di E=\di_{P_H} E$ whenever $E\in \cE$ is a countable union of balls.  In particular, this is true if $E\in \cE$ is an open set.

Let $(E_m)$ be a decreasing sequence of sets in $\cE$ and let $E=\cap_m E_m\in \cE$.  We claim that if $\di E_m=\di_{P_H} E_m$ for $m\in \N$, then $\di E=\di_{P_H}E$.
We first show that
\begin{align}\label{eq:ContFromOutside}
\cap_m \left(\di_{P_H}E_m\right)=\di_{P_H}E.
\end{align}
To prove \eqref{eq:ContFromOutside}, observe that for $x\in H^-$ and $A\in {\mathcal{L}}^n$, we have $x\in \di_{P_H}A$ if and only if $\{x,x^\dagger\}\subset A$. Hence, when $x\in H^-$ we have
$$x\in \cap_m \left(\di_{P_H} E_m\right)
~~\Leftrightarrow~~\{x,x^\dagger\}\subset E_m,\text{ for }m\in\N
~~\Leftrightarrow~~ \{x,x^\dagger\}\subset E
~~\Leftrightarrow~~ x\in \di_{P_H}E.$$
For $x\in H^+$, we have $x\in \di_{P_H}A$ if and only if
$\{x,x^\dagger\}\cap A\ne \emptyset$.  Therefore
$$x\in \cap_m \left(\di_{P_H}E_m\right)
~~\Leftrightarrow~~\{x,x^\dagger\}\cap E_m\ne\emptyset, \text{ for }m\in\N
~~\Leftrightarrow~~\{x,x^\dagger\}\cap E\ne \emptyset
~~\Leftrightarrow~~x\in \di_{P_H}E,$$
where for the second equivalence, we used the fact that $(E_m)$ is decreasing, implying that if one of $x$ or $x^\dagger$ is not in $E_m$ for some $m$, then the other must be in $E_m$ for all $m$.  This proves \eqref{eq:ContFromOutside}.
From the monotonicity of $\di$, the assumption $\di E_m=\di_{P_H} E_m$ and  \eqref{eq:ContFromOutside}, we obtain
$$\di E\subset \cap_m \di E_m=\cap_m\di_{P_H}E_m=\di_{P_H}E,$$
and the measure-preserving properties of $\di$ and $\di_{P_H}$ show that $\di E=\di_{P_H}E$.  Concluding, the subclass of $\cE$ where $\di$ and $\di_{P_H}$ essentially coincide is closed under decreasing limits, if the intersection is contained in $\cE$.

Let $\cE={\cL}^n$.  If $E\in \cE$, there is a decreasing sequence $(E_m)$ of open sets whose intersection is essentially $E$.  By what was proved earlier, this concludes the proof for this case.

Now let $\cE={\mathcal{C}}^n$ and let $E\in \cE$. By compactness, there is a set $E_1$ that contains $E$ in its interior and is a finite union of balls with radius $1$. Using compactness again, we can find a finite union $E_2$ of balls with radius at most $1/2$ containing $C$ in its interior, such that $E_2\subset E_1$. Continuing this way, a decreasing sequence $(E_m)$ of finite unions of balls is constructed with $\cap_m E_m=E$. The first part of the proof shows that $\di E_m=\di_{P_H}E_m$, and the second part of the proof gives $\di E=\di_{P_H}E$.
\end{proof}

Let ${\mathcal{U}}^n$ be the family of countable unions of balls in $\R^n$.  Suppose that $\cE\subset {\mathcal{L}}^n$ is a class of measurable sets containing all balls and  such that for each $E\in \cE$, there is a decreasing sequence $(E_m)$ of sets in ${\mathcal{U}}^n$ whose intersection is essentially $E$.  Then the proof of the previous lemma shows that it holds for the class $\cE$.

\begin{thm}\label{thmm9measurable}
Let $H=u^{\perp}$, $u\in S^{n-1}$, be oriented with $u\in H^+$, let $\cE={\mathcal{C}}^n$ or ${\mathcal{L}}^n$, and suppose that $\di:{\mathcal{E}}\rightarrow {\mathcal{L}}^n$ is monotonic, measure preserving, perimeter preserving on convex bodies, and invariant on $H$-symmetric unions of two disjoint balls. Then $\di$ essentially equals $\Id$, $\dagger$, $\di_{P_H}$, or $\di_{P_H}^{\dagger}$.
\end{thm}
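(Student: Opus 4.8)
The plan is to assemble the machinery of this section and reduce the theorem to identifying the contraction $\varphi_\di$. First I would invoke Lemma~\ref{lemunion}: since both ${\mathcal{C}}^n$ and $\cL^n$ are closed under intersection with $H$-symmetric spherical cylinders, the hypotheses that $\di$ is monotonic, measure preserving, and invariant on $H$-symmetric unions of two disjoint balls imply that $\di$ respects $H$-cylinders. Next, by Lemma~\ref{ballmap}, the measure-preserving and perimeter-preserving hypotheses give that $\di$ maps balls to balls. With ``monotonic'', ``respects $H$-cylinders'', and ``maps balls to balls'' now available, Lemma~\ref{lemm5} produces a contraction $\varphi_\di\colon\R\to\R$ with $\di B(x+tu,r)=B(x+\varphi_\di(t)u,r)$, essentially, for all $r>0$ and $x\in H$. (Note that Theorem~\ref{thmm9convex} is not needed: the eikonal equation is too weak, and the disjoint-balls hypothesis lets us pin $\varphi_\di$ down directly.)

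The crux is to show that $\varphi_\di(t)\in\{t,-t\}$ for every $t\in\R$. Fix $t\ne 0$ and pick $0<r<|t|$; then $B(tu,r)$ and its reflection $B(tu,r)^\dagger=B(-tu,r)$ are disjoint, so $U=B(tu,r)\cup B(tu,r)^\dagger$ is an $H$-symmetric union of two disjoint balls lying in $\cE$. By invariance, $\di U=U$ essentially, and by monotonicity, $\di B(tu,r)\subset\di U$ essentially, so the ball $B(\varphi_\di(t)u,r)$ is contained, up to an $\cH^n$-null set, in $U$. Since $\R^n\setminus U$ is open, the open set $\inte B(\varphi_\di(t)u,r)\setminus U$ is open and null, hence empty, so $\inte B(\varphi_\di(t)u,r)\subset U$; being connected, it lies in one of the two disjoint closed balls composing $U$. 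Taking closures and comparing radii forces $B(\varphi_\di(t)u,r)$ to coincide with that ball, so $\varphi_\di(t)=t$ or $\varphi_\di(t)=-t$. Continuity of $\varphi_\di$ (it is a contraction) then gives $\varphi_\di(0)=0$ as well. Thus $\varphi_\di(t)\in\{t,-t\}$ for all $t$.

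It remains to observe that a continuous $\varphi_\di$ with $\varphi_\di(t)\in\{t,-t\}$ everywhere must be one of the four functions $t$, $-t$, $|t|$, $-|t|$. Indeed, the sets $A_+=\{t:\varphi_\di(t)=t\}$ and $A_-=\{t:\varphi_\di(t)=-t\}$ are closed (preimages of $\{0\}$ under $t\mapsto\varphi_\di(t)\mp t$), and $A_+\cap(0,\infty)$ is also open in $(0,\infty)$: near a point $t_0>0$ with $\varphi_\di(t_0)=t_0$ one has, by continuity, $\varphi_\di(t)\ne -t$, hence $\varphi_\di(t)=t$. So $A_+\cap(0,\infty)$ is relatively clopen in the connected set $(0,\infty)$, and likewise on $(-\infty,0)$; since $0\in A_+\cap A_-$, the four admissible gluings yield exactly $\varphi_\di(t)=t,-t,|t|,-|t|$. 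Finally, Lemma~\ref{lemExtend} applies, since its hypotheses (monotonic, measure preserving, maps balls to balls, invariant on $H$-symmetric unions of two disjoint balls) have all been verified; as $\varphi_\di$ is one of these four functions, Lemma~\ref{lemExtend} gives that $\di$ essentially equals $\Id$, $\dagger$, $\di_{P_H}$, or $\di_{P_H}^{\dagger}$, respectively, completing the proof.

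The main obstacle—indeed the only step that is not bookkeeping—is the middle paragraph: squeezing the pointwise dichotomy $\varphi_\di(t)\in\{t,-t\}$ out of the seemingly weak invariance on $H$-symmetric unions of two disjoint balls. The delicate point there is upgrading the ``essentially'' containments to a genuine inclusion of closed balls, which is handled by using that $\R^n\setminus U$ is open and that a ball is the closure of its interior.
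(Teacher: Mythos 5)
Your proposal is correct and follows essentially the same route as the paper: both invoke Lemmas~\ref{lemunion} and~\ref{ballmap} to obtain the hypotheses of Lemma~\ref{lemm5}, both pin down $\varphi_\di(t)\in\{t,-t\}$ from the disjoint-balls invariance, and both finish via continuity and Lemma~\ref{lemExtend}. You merely supply slightly more detail than the paper at the ``essentially'' step (upgrading the a.e.\ inclusion $B(\varphi_\di(t)u,r)\subset U$ to an actual one via openness) and in the continuity/clopen gluing argument, and you bypass the paper's passing appeal to Theorem~\ref{thmm7}, which is indeed inessential for this proof.
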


\begin{proof}
By Lemmas~\ref{ballmap} and~\ref{lemunion}, our assumptions imply that $\di$ respects $H$-cylinders and maps balls to balls, so Theorem~\ref{thmm7} implies that the restriction of $\di$ to $\cK^n$ is determined by (\ref{kseg}).  Let $t_0\neq 0$.  For $0<r<|t_0|$, the balls $B(\pm t_0u,r)$ are disjoint.
Since $\di$ is monotonic and invariant on $H$-symmetric unions of two disjoint balls, either $\di B(t_0u,r)=B(t_0u,r)$ or $\di B(t_0u,r)=B(-t_0u,r)$. It follows from (\ref{ball}) that $\varphi_\di(t_0)=\pm t_0$.  The continuity of $\varphi_\di$ implies that $\varphi_{\di}(t)=t$, $-t$, $|t|$, or $-|t|$, so the desired conclusion is provided by Lemma~\ref{lemExtend}.
\end{proof}

The map $\di:\cL^n\to\cL^n$ in Example~\ref{Marex} can also be considered as a map $\di: {\mathcal C}^n\to {\mathcal C}^n$.  We showed in Example~\ref{Marex} that $\di=\di_{P_H}$ on $\cK^n$.  This implies that $\di$ is perimeter preserving on convex bodies.  Consequently, $\di$ has all the properties assumed in the previous theorem except that it is not invariant on $H$-symmetric unions of two disjoint balls, showing that the latter property cannot be replaced by the weaker assumption of respecting $H$-cylinders.

The following examples deal with the other assumptions in Theorem~\ref{thmm9measurable}, where it is always assumed that $H=u^{\perp}$, $u\in S^{n-1}$.

\begin{ex}\label{thm9ex1}
{\rm  Given $E\in \cL^n$ with ${\mathcal{H}}^n(E)>0$, let
$$c_E=\frac{1}{{\mathcal{H}}^n(E)}\int_E x\,dx$$
be the center of gravity of $E$ and let $\di E$ be the reflection of $E$ in the hyperplane $H+c_E$.  Then $\di:\cE\to\cE$ for $\cE={\mathcal{C}}^n$ or $\cL^n$ is measure preserving, perimeter preserving on convex bodies, and invariant on $H$-symmetric unions of two disjoint balls.  (Indeed, $\di$ is clearly invariant on all $H$-symmetric sets.)  It is not monotonic, as can be seen by considering the double cone $\conv([-u,u]\cup (B^n\cap H))$ and its subset, the cone $\conv(\{u\}\cup (B^n\cap H))$.
\qed}
\end{ex}

\begin{ex}\label{thm9ex2}
{\rm  Let $\cE={\mathcal{C}}^n$ or $\cL^n$ and define $\di:\cE\to\cE$ by $\di E=\cl(\inte E)$.  Then $\di$ is monotonic, perimeter preserving on convex bodies, and invariant on $H$-symmetric unions of two disjoint balls, but not measure preserving.
\qed}
\end{ex}

\begin{ex}\label{thm9ex3}
{\rm For $x\in \R^n$, let
$$F(x)=\begin{cases}
x,& {\text{if $d(x,H)>1$,}}\\
x^{\dagger},& {\text{if $d(x,H)\le 1$}}
\end{cases}$$
and define $\di:\cE\to\cL^n$ for $\cE={\mathcal{C}}^n$ or $\cL^n$ by $\di E=F(E)$.  Then $\di$ is monotonic, measure preserving, and invariant on $H$-symmetric unions of two disjoint balls (indeed, on all $H$-symmetric sets), but does not preserve perimeter on convex bodies.
\qed}
\end{ex}

For maps $\di:{\mathcal K}^n\rightarrow {\mathcal{L}}^n$, invariance on $H$-symmetric unions of two disjoint balls is not available.  We therefore resort to a different and rather strong condition; we say that $\di$ is \emph{convexity preserving away from $H$} if $\di K$ is essentially convex (that is, $\di K$ coincides with a convex set up to a set of $\cH^n$-measure zero) for all $K\in {\mathcal K}^n$ with $K\cap H=\emptyset$.

\begin{thm}\label{thmmar14}
Let $H=u^{\perp}$, $u\in S^{n-1}$, be oriented with $u\in H^+$ and let $\di:{\mathcal K}_n^n\rightarrow {\mathcal{L}}^n$ be monotonic, measure preserving, invariant on $H$-symmetric sets, perimeter preserving on convex bodies, and convexity preserving away from $H$. Then $\di$ essentially equals $\Id$, $\dagger$, $\di_{P_H}$, or $\di_{P_H}^{\dagger}$.
\end{thm}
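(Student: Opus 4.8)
The plan is to reduce Theorem~\ref{thmmar14} to an identification of the contraction $\varphi_\di$ from Lemma~\ref{lemm5} and then invoke Corollary~\ref{corlemExtend}. First I would check that the earlier machinery applies. Invariance on $H$-symmetric sets gives invariance on $H$-symmetric spherical cylinders, and since every convex body is bounded while such cylinders are themselves convex bodies, monotonicity then forces $\di$ to respect $H$-cylinders. Measure preservation together with perimeter preservation on convex bodies gives, via Lemma~\ref{ballmap}, that $\di$ maps balls to balls. Hence Theorem~\ref{thmm7}(i) applies and there is a contraction $\varphi=\varphi_\di$ with (\ref{ball}) and (\ref{kseg}); Theorem~\ref{thmm9convex} applies, so $|\varphi'(t)|=1$ for $\cH^1$-almost all $t$; and Corollary~\ref{cor6a} gives $\varphi(0)=0$.

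The heart of the argument will be to show that convexity preservation away from $H$ forces $\varphi$ to be affine on $(0,\infty)$ and on $(-\infty,0)$. To probe $\varphi$ on an interval of positive reals, I would fix $c>0$ and $m>0$, set $P=\linspan\{e_1,u\}$, take the triangle $\Delta\subset P$ with vertices $-e_1+cu$, $e_1+cu$, $(c+m)u$, and put $K=\Delta+\rho(B^n\cap P^\perp)$ for a small $\rho>0$. Then $K\in\cK^n_n$ and, because $P^\perp\perp u$, the $u$-coordinates of $K$ all lie in $[c,c+m]$, so $K\cap H=\emptyset$. Writing points of $H$ as $\xi e_1+y$ with $\xi\in\R$, $y\in P^\perp$, the chord of $K$ orthogonal to $H$ over $\xi e_1+y$ has, for $\xi\in(-1,0)$ and $\|y\|<\rho$, midpoint $u$-height $t(\xi)=c+m(\xi+1)/2$ and half-length $m(\xi+1)/2$, both independent of $y$. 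By (\ref{kseg}), the corresponding chord of $\di K$ is this chord shifted in the $u$-direction by $\varphi(t(\xi))-t(\xi)$, so the top and bottom boundary functions of $\di K$ over this region are $G^\pm(\xi e_1+y)=\varphi(t(\xi))\pm m(\xi+1)/2$, and these are continuous because $\varphi$ is. Since $\di K$ is essentially convex, $G^+$ and $G^-$ agree almost everywhere with the (continuous) upper and lower boundary functions of some convex body and hence, by continuity, coincide with them; therefore $G^+$ is concave and $G^-$ is convex on the relevant open set. Restricting to $y=0$ and substituting $s=t(\xi)$ --- an increasing affine bijection from $(-1,0)$ onto $(c,c+m/2)$ --- this says that $s\mapsto\varphi(s)+s$ is concave and $s\mapsto\varphi(s)-s$ is convex on $(c,c+m/2)$, i.e., the slope of $\varphi$ is simultaneously nonincreasing and nondecreasing there, so $\varphi$ is affine on $(c,c+m/2)$. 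Letting $c>0$ and $m>0$ vary, $\varphi$ is affine on $(0,\infty)$. The mirror construction, with the triangle in $P$ having vertices $-e_1+cu$, $e_1+cu$, $(c-m)u$ for $c<0$ and $m>0$ (so that it lies below $H$), gives in the same way that $\varphi$ is affine on $(-\infty,0)$.

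To conclude, note that on $(0,\infty)$ we now have $\varphi(t)=at+b$ with $a\in\{1,-1\}$ (from $|\varphi'|=1$ a.e.), and the continuity of $\varphi$ at $0$ together with $\varphi(0)=0$ forces $b=0$; the same holds on $(-\infty,0]$. The four sign combinations yield exactly $\varphi_\di(t)=t$, $-t$, $|t|$, or $-|t|$, and Corollary~\ref{corlemExtend} then gives that $\di$ essentially equals $\Id$, $\dagger$, $\di_{P_H}$, or $\di_{P_H}^\dagger$, respectively.

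I expect the main obstacle to be the construction in the second step: finding a family of convex bodies whose $\di$-images, via (\ref{kseg}), encode $\varphi$ through a convexity constraint, and handling the merely \emph{essential} convexity of $\di K$ cleanly --- which is why I would insist on arranging that the boundary functions $G^\pm$ be continuous, so that ``equal almost everywhere to a convex (resp.\ concave) function'' upgrades to genuine convexity (resp.\ concavity) --- together with the mild bookkeeping needed to transfer the planar triangle construction to $\R^n$ through the thickening by $\rho(B^n\cap P^\perp)$.
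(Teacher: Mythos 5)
Your proof is correct, and the overall scaffolding is the same as the paper's: verify that the earlier machinery (Lemma~\ref{ballmap}, respecting $H$-cylinders, Theorem~\ref{thmm7}, Theorem~\ref{thmm9convex}, Corollary~\ref{cor6a}) applies, use the convexity-preserving-away-from-$H$ hypothesis on a family of convex bodies disjoint from $H$ to force $\varphi_\di$ to be affine on each half-line, combine with $|\varphi_\di'|=1$ a.e.\ and $\varphi_\di(0)=0$, and finish with Corollary~\ref{corlemExtend}. The crux, however, is handled by a genuinely different mechanism. The paper probes with wedges $K(t,r)=\{x+x_ne_n: x\in C+[o,te_1],\ r\le x_n\le r+2x_1\}$ (where $C$ is a unit $(n-2)$-cube in $H\cap e_1^\perp$), observes that $\di K(t,r)$ must contain the convex hull $M$ of the images of the two extreme cross-sections, and then uses the fact that $M$ has the same volume as $K(t,r)$ (plus measure preservation) to force $\di K(t,r)=M$ exactly, from which the affine interpolation formula for $\varphi_\di$ is read off. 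You instead probe with a thickened triangle $\Delta+\rho(B^n\cap P^\perp)$ and extract affineness from the pair of conditions that the upper boundary function $\varphi(s)+(s-c)$ be concave and the lower boundary function $\varphi(s)-(s-c)$ be convex on $(c,c+m/2)$; the affine shift by $\pm(s-c)$ then makes $\varphi$ simultaneously concave and convex, hence affine. This avoids the paper's explicit volume computation and the approximation step needed to show $M_1,M_2\subset L$, and you handle the ``essential'' convexity cleanly, as the paper does, by arranging that your boundary functions $G^\pm$ are continuous so that almost-everywhere equality with the (continuous) boundary functions of the genuine convex body upgrades to everywhere equality. Both approaches are sound; yours is arguably a shade more direct, trading the volume argument for the elementary observation that a function whose affine perturbations are respectively concave and convex must itself be affine.
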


\begin{proof}
We may assume that $u=e_n$. Let $C=\{0\}\times [0,1]^{n-2}\times \{0\}$ be the unit cube in $H\cap e_1^\perp$.  For $r,t>0$, define
$$K(t,r)=\{x+x_ne_n: x\in C+[o,te_1],~~ r\le x_n\le r+2x_1\}.$$
Then $K(t,r)\in {\mathcal{K}}^n_n$ is disjoint from $H$ and if $x\in C+[o,te_1]$, then $x+(r+x_1)e_n$ is the midpoint of $K(t,r)\cap (\langle e_n\rangle+x)$.  From Theorem~\ref{thmm9convex}, we know that (\ref{kseg}) holds, where $|\varphi'_{\di}(t)|=1$ for $\cH^1$-almost all $t$.  Then
\begin{equation}\label{mar141}
\di K(t,r)=\{x+x_ne_n: x\in C+[o,te_1],~~ \varphi_{\di}(r+x_1)-x_1\le x_n\le \varphi_{\di}(r+x_1)+x_1\},
\end{equation}
essentially. Since $\di$ is convexity preserving away from $H$, there is a convex set $L=L(t,r)$ essentially equal to $\di K(t,r)$. As the boundary of a convex set has $\cH^n$-measure zero, we may assume that $L$ is closed. The sets $M_1=C+\varphi_\di(r)e_n$ and $M_2=C+[(\varphi_\di(r+t)-t)e_n, (\varphi_\di(r+t)+t)e_n]+te_1$ are contained in $L$, because any point in either set can be approximated by points in the right side of \eqref{mar141} that are also points in $L$. As $L$ is convex, $M=\conv(M_1\cup M_2)\subset L$. The wedge $M$ has the same volume as $K(t,r)$, so the measure-preserving property of $\di$ yields $\di K(t,r)=L=M$, up to sets of $\cH^n$-measure zero. Comparing the set in \eqref{mar141} with $M$ and using the continuity of $\varphi_\di$, we obtain
$$\varphi_\di(r+x_1)=\left(1-\frac{x_1}t\right)\varphi_\di(r)+
\frac{x_1}t\varphi_\di(r+t)
$$
for $0\le x_1\le t$. Letting $r\to 0+$, we conclude that $\varphi_\di$ is affine on $[0,t)$ for $t>0$.  As $\di$ is monotonic and invariant on $H$-symmetric sets, it respects $H$-cylinders, and our assumptions and Lemma~\ref{ballmap} ensure that it also maps balls to balls.  Therefore, by Corollary~\ref{cor6a}, $\varphi_\di(0)=0$ and so $\varphi_\di$ is linear.  As $|\varphi'_\di|=1$ almost everywhere, this implies that $\varphi_\di=\pm \mathrm{Id}$ on $[0,\infty)$. Similar arguments applied to
$$K'(t,r)=\{x+x_ne_n: x\in C+[te_1,o],~~ r-2x_1\le x_n\le r\},$$
where $t,r<0$, show that $\varphi_\di=\pm \mathrm{Id}$ on $(-\infty,0]$ and hence that $\varphi_{\di}(t)=t$, $-t$, $|t|$, or $-|t|$.   The proof is completed by Corollary~\ref{corlemExtend}.
\end{proof}

Example~\ref{thm9ex1} shows that the assumption that $\di$ is monotonic cannot be dropped in the previous theorem.  If $o\neq x_0\in H$, the map $\di K=K+x_0$ has all the properties in Theorem~\ref{thmmar14} except invariance on $H$-symmetric sets.  If we define $\di$ via (\ref{kseg}), where $\varphi_{\di}(t)=t/2$, then it is easily checked that $\di$ has all the properties except that it is not perimeter preserving.
Example~\ref{thm9exPolComp} shows that the assumption that $\di$ is convexity preserving away from $H$ cannot be omitted.

We do not have an example showing that the measure-preserving assumption is necessary, and this relates to an open problem stated as a variant of \cite[Problem~11.1]{BGG}, namely, does there exist a map from the convex bodies in $\R^n$ to those that are symmetric with respect to a fixed hyperplane $H$ that is monotonic, invariant on $H$-symmetric sets, and perimeter-preserving?  If such a map existed, it would provide the required example.

\begin{thm}\label{thmm10}
Let $H=u^{\perp}$, $u\in S^{n-1}$, be oriented with $u\in H^+$.  Let $X={\mathcal{S}}(\R^n)$ or $\cV(\R^n)$ and suppose that $T:X\rightarrow X$ is a rearrangement. If the induced map $\di_T$ defined by \eqref{eqIndic} is perimeter preserving on convex bodies and invariant on $H$-symmetric unions of two disjoint balls, then $T$ essentially equals $\Id$, $\dagger$, ${P_H}$, or ${P_H}^{\dagger}$.
\end{thm}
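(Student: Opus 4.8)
The plan is to deduce the statement about $T$ from the classification of the induced set map $\di_T$ already obtained in Section~\ref{General sets}, using the reconstruction formula \eqref{eqoct62} to pass back from sets to functions. First I would verify that $\di_T:\cL^n\to\cL^n$ satisfies all the hypotheses of Theorem~\ref{thmm9measurable} with $\cE=\cL^n$. Since $T:X\to X$ is a rearrangement and $X={\mathcal{S}}(\R^n)$ or $\cV(\R^n)$, Lemma~\ref{fromSetToFct}(i) guarantees that $\di_T$ is well defined and measure preserving, while Theorem~\ref{lemapril30}(i) shows it is monotonic; the remaining two properties, perimeter preserving on convex bodies and invariance on $H$-symmetric unions of two disjoint balls, are exactly the hypotheses of the present theorem. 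Hence Theorem~\ref{thmm9measurable} applies and yields that $\di_T$ essentially equals one (and only one) of the four maps $\Id$, $\dagger$, $\di_{P_H}$, or $\di_{P_H}^{\dagger}$.

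Next, fix $f\in X$. By Theorem~\ref{lemapril30}(ii), $Tf$ is determined essentially by $\di_T$ through \eqref{eqoct62}. The crucial observation is that this formula involves only the countable family of super-level sets $A_t=\{z:f(z)\ge t\}$ with $t\in\Q$, $t>\essinf f$. For each such $t$, the conclusion of the previous paragraph gives that $\di_T A_t$ coincides, up to an $\cH^n$-null set, with $A_t$, $A_t^{\dagger}$, $\di_{P_H}A_t$, or $\di_{P_H}^{\dagger}A_t$, with the same one of the four alternatives for every $t$. Discarding the union of these countably many null sets (still null), we may assume the relevant equality holds literally for every rational $t>\essinf f$.

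Now I would treat the four cases by substituting into \eqref{eqoct62}. If $\di_T$ essentially equals $\Id$, the formula reads
\begin{equation*}
Tf(x)=\max\Bigl\{\sup\{t\in\Q,\ t>\essinf f:\ f(x)\ge t\},\ \essinf f\Bigr\},
\end{equation*}
which equals $f(x)$ for $\cH^n$-almost all $x$, using $f(x)=\sup\{t\in\Q:f(x)\ge t\}$ and $f(x)\ge\essinf f$ a.e.; thus $T=\Id$ essentially. If $\di_T$ essentially equals $\dagger$, then, since $\dagger$ is measure preserving, $\essinf f^{\dagger}=\essinf f$ and $\di_T A_t=A_t^{\dagger}=\{z:f^{\dagger}(z)\ge t\}$ essentially, so the same computation gives $Tf=f^{\dagger}$ essentially, i.e.\ $T=\dagger$. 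For the two polarization cases I would first record the elementary identity $\di_{P_H}\{z:f(z)\ge t\}=\{z:P_Hf(z)\ge t\}$, which follows at once from the pointwise definition \eqref{pol} and the definition \eqref{eqIndic} of the induced map, together with $\essinf P_Hf=\essinf f$ (equimeasurability of $P_H$); plugging into \eqref{eqoct62} then yields $Tf=P_Hf$ essentially, and likewise $Tf=P_H^{\dagger}f$ essentially in the last case. Since $f\in X$ was arbitrary, $T$ is essentially one of $\Id$, $\dagger$, $P_H$, $P_H^{\dagger}$.

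The only non-cosmetic point — which I expect to be the main obstacle — is the careful book-keeping of exceptional sets: one must exploit that \eqref{eqoct62} uses only countably many super-level sets, so that the union of the $f$-dependent null sets coming from ``$\di_T$ essentially equals $\ldots$'' is again null, and that the identification of the super-level sets of $P_Hf$ with $\di_{P_H}$ applied to the super-level sets of $f$ is an exact equality (no further null-set error), so that it composes correctly with the essential equality $\di_T\doteq\di_{P_H}$. Everything else is a direct appeal to the results of Sections~\ref{General functions} and~\ref{General sets}.
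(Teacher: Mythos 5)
Your proof is correct and follows essentially the same route as the paper: you invoke Lemma~\ref{fromSetToFct}(i) and Theorem~\ref{lemapril30}(i) to check the hypotheses of Theorem~\ref{thmm9measurable} with $\cE=\cL^n$, classify $\di_T$, and then pass back to $T$ via Theorem~\ref{lemapril30}(ii). The paper's published argument is simply terser, stating only ``the proof is completed by Theorem~\ref{lemapril30}(ii)''; the null-set bookkeeping and the exact identity $\di_{P_H}\{z:f(z)\ge t\}=\{z:P_Hf(z)\ge t\}$ that you carefully spell out are exactly what that citation implicitly relies on.
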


\begin{proof}
By Lemma~\ref{fromSetToFct} and Theorem~\ref{lemapril30}(i), $\di_T$ is well defined, monotonic, and measure preserving.  Together with our assumptions on $\di_T$, we can apply Theorem~\ref{thmm9measurable} with $\cE=\cL^n$ to conclude that $\di_T$ essentially equals $\Id$, $\dagger$, $\di_{P_H}$, or $\di_{P_H}^{\dagger}$. The proof is completed by Theorem~\ref{lemapril30}(ii).
\end{proof}

We now address the question of finding versions of the previous theorem for maps $T:X \to X$, where $X={\mathcal{M}}(\R^n)$ or ${\mathcal{M}}_+(\R^n)$.

\begin{thm}\label{may9cor}
Let $H=u^{\perp}$, $u\in S^{n-1}$, be oriented with $u\in H^+$.  Let $X={\mathcal{M}}(\R^n)$ (or $X={\mathcal{M}}_+(\R^n)$) and let $T:X\to X$ be a rearrangement. If the induced map $\di_T$ defined by \eqref{eqIndic} is perimeter preserving on convex bodies and invariant on $H$-symmetric unions of two disjoint balls, then the restriction of $T$ to ${\mathcal{S}}(\R^n)$ (or $\cV(\R^n)$, respectively) essentially equals $\Id$, $\dagger$, ${P_H}$, or ${P_H}^{\dagger}$.
\end{thm}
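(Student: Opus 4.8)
The plan is to reduce the statement directly to Theorem~\ref{thmm10} by passing to a suitable restriction of $T$. First I would verify that $T$ really does restrict to the relevant subspace of functions. When $X={\mathcal{M}}(\R^n)$, Lemma~\ref{may8lem}(ii) shows that a rearrangement $T$ maps ${\mathcal{S}}(\R^n)$ into itself; when $X={\mathcal{M}}_+(\R^n)$, equimeasurability alone forces $T$ to map ${\mathcal{V}}(\R^n)$ into itself, which is immediate from \eqref{equi} and recorded after Example~\ref{may1ex2}. Writing $T_0$ for this restriction, I note that equimeasurability and monotonicity are inherited under restriction, so $T_0:{\mathcal{S}}(\R^n)\to{\mathcal{S}}(\R^n)$ (respectively $T_0:{\mathcal{V}}(\R^n)\to{\mathcal{V}}(\R^n)$) is again a rearrangement.

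Next I would observe that the induced set maps coincide. For every $A\in\cL^n$ the characteristic function $1_A$ lies in ${\mathcal{V}}(\R^n)\subset{\mathcal{S}}(\R^n)$, so $T_01_A=T1_A$, and hence by the definition \eqref{eqIndic} of the induced map we obtain $\di_{T_0}=\di_T$ as maps $\cL^n\to\cL^n$. In particular the hypotheses imposed on $\di_T$---that it be perimeter preserving on convex bodies and invariant on $H$-symmetric unions of two disjoint balls---hold verbatim for $\di_{T_0}$.

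With these two points established, Theorem~\ref{thmm10} applied to the rearrangement $T_0$ gives at once that $T_0$ essentially equals $\Id$, $\dagger$, ${P_H}$, or ${P_H}^{\dagger}$, which is the assertion. There is no serious obstacle here: the substance of the theorem is carried entirely by Theorem~\ref{thmm10} (and so ultimately by Theorem~\ref{thmm9measurable} together with Theorem~\ref{lemapril30}), and the only genuine input beyond bookkeeping is the fact that a rearrangement of ${\mathcal{M}}(\R^n)$ respects the class ${\mathcal{S}}(\R^n)$, i.e., Lemma~\ref{may8lem}(ii), the corresponding fact for ${\mathcal{M}}_+(\R^n)$ and ${\mathcal{V}}(\R^n)$ being trivial. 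It is worth stressing---and Example~\ref{may1ex3} makes this precise---that no conclusion can be drawn about $T$ on the whole of $X$, which is exactly why only the restriction appears in the statement.
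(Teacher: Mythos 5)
Your argument is correct and follows the same route as the paper's own proof: restrict $T$ to ${\mathcal{S}}(\R^n)$ (or $\cV(\R^n)$) using Lemma~\ref{may8lem}(ii) (or the trivial observation for ${\mathcal{M}}_+(\R^n)$), note that the induced set map is unchanged, and invoke Theorem~\ref{thmm10}. Your explicit verification that $\di_{T_0}=\di_T$ is a small but welcome addition of detail that the paper leaves implicit.
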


\begin{proof}
If $X={\mathcal{M}}(\R^n)$ (or $X={\mathcal{M}}_+(\R^n)$) and $T:X\to X$ is a rearrangement, then $T:{\mathcal{S}}(\R^n)\to {\mathcal{S}}(\R^n)$ (or $T:\cV(\R^n)\to \cV(\R^n)$, respectively).  This follows from Lemma~\ref{may8lem}(ii) (or the definitions of $\cV(\R^n)$ and equimeasurability, respectively). Since the restricted maps satisfy the hypotheses of Theorem~\ref{thmm10}, the result follows.
\end{proof}

Example~\ref{may1ex3} shows that the conclusion of Theorem~\ref{may9cor} cannot be drawn for the unrestricted maps $T:{\mathcal{M}}(\R^n)\to {\mathcal{M}}(\R^n)$ or $T:{\mathcal{M}}_+(\R^n)\to {\mathcal{M}}_+(\R^n)$.

\section{Appendix}

The purpose here is to compare our approach to rearrangements on ${\mathcal{S}}(\R^n)$ in Section~\ref{General functions} with that of \cite{BS} and \cite{VSW}.  Recall that we begin with a rearrangement $T:{\mathcal{S}}(\R^n)\to {\mathcal{S}}(\R^n)$ and show in Theorem~\ref{lemapril30}(ii) that $T$ is essentially determined by the associated measure-preserving and monotonic map $\di_T:\cL^n \to \cL^n$, defined by \eqref{eqIndic}, via the formula
\begin{equation}\label{eqoct201}
Tf(x)=\max\left\{\sup\{t\in \Q,\, t>\essinf f: x\in \di_T \{z: f(z)\ge t\}\},\essinf f\right\},
\end{equation}
which holds for $\cH^n$-almost all $x\in\R^n$, or alternatively via \eqref{eqnov13}.

Brock and Solynin \cite[Section~3]{BS} and Van Shaftingen and Willem \cite{VSW} reverse the procedure, starting with a set transformation $\di$ and defining function transformations. The latter paper, particularly, allows other possibilities, but we may focus on the case when $\di:\cL^n \to \cL^n$ is measure preserving and {\em pointwise monotonic} (meaning that if $K\subset L$, the containment $\di K\subset \di L$ must hold everywhere and not just almost everywhere).  Consider defining a function $T$ on ${\mathcal{S}}(\R^n)$ by
\begin{equation}\label{eqoct202}
Tf(x)=\max\{\sup\{t>\inf f: x\in \di \{z: f(z)> t\}\},\inf f\},
\end{equation}
for $x\in \R^n$.  The formula \eqref{eqoct202} is equivalent to \cite[Equation (3.1), p.~1762]{BS}, where $f$ is assumed continuous and it is shown that $Tf\in {\mathcal{S}}(\R^n)$.  For general $f\in {\mathcal{S}}(\R^n)$, Brock and Solynin suggest replacing the supremum in \eqref{eqoct202} by the essential supremum and then claim that $T:{\mathcal{S}}(\R^n)\to {\mathcal{S}}(\R^n)$ is a rearrangement.   In \cite{VSW}, the authors work with {\em admissible functions}, which in our context is equivalent to demanding that $\inf f=\essinf f$.  In \cite[Definition~4]{VSW}, the map $T$ defined by \eqref{eqoct202} is considered for admissible $f$ and denoted by $\breve{S}$, as well as a map $\overline{S}$ where in \eqref{eqoct202}, $\{z: f(z)> t\}$ is replaced by $\{z: f(z)\ge t\}$.  Among other results, it is stated in \cite[Propositions~1--3]{VSW} that under the above assumptions on $\di$, $T=\overline{S}=\breve{S}$ and the formula
\begin{equation}\label{eqoct211}
Tf(x)=\max\{\sup\{t>\essinf f: x\in \di \{z: f(z)\ge t\}\},\essinf f\},
\end{equation}
for $x\in \R^n$, defines a rearrangement $T:{\mathcal{S}}(\R^n)\to {\mathcal{S}}(\R^n)$.

Since the definition \eqref{eqoct211} appears to be a thoroughly measure-theoretic one, it is natural to ask whether the pointwise monotonicity of $\di$ is really required, or whether it could be replaced by monotonicity in our sense.  The following example shows that this is not the case.

\begin{ex}\label{exoct211}
{\em  Let $n=1$ and for $A\in \cL^1$, define
$$
\di A=\begin{cases}
A\cup\{\esssup A+1\},& {\text{if  $\esssup A<\infty$,}}\\
A,& {\text{otherwise.}}
\end{cases}
$$
Then $\di$ is essentially the identity on $\cL^1$ and thus measure preserving and monotonic, but it is not pointwise monotonic. For the function $f(x)=-|x|$ in ${\mathcal{S}}(\R^n)$ and $t\le 0$,
$$\di \{f\ge t\}=\di [t,-t]=[t,-t]\cup\{1-t\},$$
so by \eqref{eqoct211}, we have
\begin{eqnarray*}
Tf(x)&=&\sup\{t\le 0:x\in [t,-t]\cup\{1-t\} \}\\
&=&\sup\{t\le 0: t\le -|x| \text{ or } t=1-x\}=
\begin{cases}
1-x & {\text{if  $x\ge 1$,}} \\
-|x|,&  {\text{otherwise.}}
\end{cases}
\end{eqnarray*}
Hence, $T$ is not equimeasurable and therefore not a rearrangement.\qed
}
\end{ex}

One may wonder whether an alternative definition, namely,
\begin{equation}\label{eqoct212}
Tf(x)=\max\{\esssup\{t>\essinf f: x\in \di \{z: f(z)\ge t\}\},\essinf f\},
\end{equation}
for $x\in \R^n$, would allow the pointwise monotonicity assumption on $\di$ to be weakened, but the following example shows that it is consistent with ZFC that this is also not true.

\begin{ex}\label{exoct212}
{\em  Let $n=1$.  Assuming the continuum hypothesis CH, Sierpi\'nski \cite{Sie} constructed a set $S\subset [0,1]^2$ such that for $t\in [0,1]$, the horizontal section $S_t=\{x\in[0,1]: (x,t)\in S\}$ is countable and for $x\in [0,1]$, the vertical section
$S_x=\{t\in[0,1]: (x,t)\in S\}$ is such that $[0,1]\setminus S_x$ is countable.  For $A\in \cL^1$, define
$$
\di A=\begin{cases}
A\cup S_t,& {\text{if $A=[-t,0]$ for some $t\in [0,1]$,}}\\
A,& {\text{otherwise.}}
\end{cases}
$$
Then $\di$ is essentially the identity on $\cL^1$ and thus measure preserving and monotonic, but it is not pointwise monotonic. Put
$$
f(x)= \begin{cases}
1+x,& {\text{if  $x\le 0$,}}\\
-x, & {\text{otherwise.}}
\end{cases}
$$
Then $f\in {\mathcal{S}}(\R^n)$ and direct calculation using \eqref{eqoct212} shows that
$$
Tf(x)= \begin{cases}
1,& {\text{if  $x\in [0,1]$,}}\\
f(x), & {\text{otherwise,}}
\end{cases}
$$
so $T$ is not equimeasurable and hence not a rearrangement.
\qed}
\end{ex}

It would suffice in the previous example if the sections of $S$ satisfy $\cH^1(S_t)=0$ for all $t\in [0,1]$ and $\cH^1(S_x)=1$ for all $x\in [0,1]$. The existence of such an $S$ can be proved using Martin's Axiom MA and is therefore consistent with the negation of CH, while it is also consistent with ZFC that no such set exists.  See \cite[p.~673]{Lac} for these and other related remarks.

The supremum over $\Q$ in \eqref{eqoct201} cannot be replaced by the supremum over $\R$, i.e., the formula in Theorem~\ref{lemapril30}(ii) cannot be replaced by \eqref{eqoct211}.  Indeed, let $n=1$ and for $f\in {\mathcal{S}}(\R)$, define
$$
Tf=\begin{cases}
1_{A\cup\{\esssup A+1\}},& {\text{if $f=1_A$, where $A\in \cL^1$ and $\esssup A<\infty$,}}\\
f,& {\text{otherwise.}}
\end{cases}
$$
Then $T$ is essentially the identity and therefore a rearrangement on ${\mathcal{S}}(\R)$, so by Theorem~\ref{lemapril30}(ii), \eqref{eqoct201} holds. From its definition \eqref{eqIndic}, we see that $\di_T$ is the map $\di$ from Example~\ref{exoct211}, so it follows from that example that if $T$ satisfied \eqref{eqoct211}, it could not be equimeasurable, a contradiction.  In a similar way, using Example~\ref{exoct212} instead of Example~\ref{exoct211}, we see that it is consistent with ZFC that the supremum over $\Q$ in \eqref{eqoct201} cannot be replaced by the essential supremum over $\R$.

To the best of our knowledge, our result in Section~\ref{General functions} that every rearrangement $T:{\mathcal{S}}(\R^n)\to {\mathcal{S}}(\R^n)$ arises from the map $\di_T:\cL^n\to \cL^n$ defined by \eqref{eqIndic} has not been proved before.
The question of when a function transformation arises from a set transformation is addressed in \cite[Proposition~4]{VSW}.  This result appears to be based on \cite[Proposition~2.4.1]{VSPhD}, where we find the statement and proof clearer.   Restricting to our setting, it states that if an otherwise arbitrary $T:{\mathcal{S}}(\R^n)\to {\mathcal{S}}(\R^n)$ is such that
\begin{equation}\label{eqoct251}
\varphi\circ f\in {\mathcal{S}}(\R^n)~~\quad{\text{and}}~~\quad \varphi(Tf)=T(\varphi\circ f)
\end{equation}
for $f\in {\mathcal{S}}(\R^n)$, whenever $\varphi:\R\to\R$ is right-continuous and increasing, then there is a $\di:\cL^n\to \cL^n$ such that $T$ arises from $\di$ via the formula \eqref{eqoct211}. Note that $T$ need not be a rearrangement for \eqref{eqoct251} to hold.  For example, let $T:{\mathcal{S}}(\R^n)\to {\mathcal{S}}(\R^n)$ be the pointwise map defined by \eqref{polF} with $F(s,t)=\max\{s,t\}$, i.e.,  $Tf(x)=\max\{f(x), f(x^\dagger)\}$ for $x\in \R^n$.  Then $T$ is monotonic but not equimeasurable, while it is easy to check that it satisfies \eqref{eqoct251}.  (However, Lemma~\ref{lemnov1} implies that an equimeasurable map satisfying \eqref{eqoct251} must be monotonic and hence a rearrangement.)  On the other hand, \cite[Proposition~2.4.1]{VSPhD} and \cite[Proposition~4]{VSW} say nothing about rearrangements $T:{\mathcal{S}}(\R^n)\to {\mathcal{S}}(\R^n)$ until it is known that \eqref{eqoct251} is true.  This is just what we show holds, essentially, in Theorem~\ref{coroct241}, with a proof allowing the weakest possible continuity assumption on $\varphi$.


\begin{thebibliography}{999}

\bibitem{A}
L.~V.~Ahlfors, {\em Conformal Invariants: Topics in Geometric Function Theory}, McGraw-Hill Book Co., New York-D\"usseldorf-Johannesburg, 1973.

\bibitem{AFP}
L.~Ambrosio, N.~Fusco, and D.~Pallara, {\em Functions of Bounded Variation and Free Discontinuity Problems}, Oxford University Press, New York, 2000.

\bibitem{Baer}
A.~Baernstein II, {\em Symmetrization in Analysis}, with D.Drasin and R.~S.~Laugesen, Cambridge University Press, Cambridge, 2019.

\bibitem{BT}
A.~Baernstein II and B.~A.~Taylor, Spherical rearrangements, subharmonic functions, and *-functions in $n$-space, {\em Duke Math. J.} {\bf 43} (1976), 245--268.

\bibitem{B80}
C.~Bandle, {\em Isoperimetric Inequalities and Applications}, Pitman, Boston, MA, 1980.

\bibitem{BCD}
M.~Bardi and I.~Capuzzo-Dolcetta, {\em Optimal Control and Viscosity Solutions of Hamilton-Jacobi-Bellman equations}, Birkh\"{a}user, Boston, 1997.

\bibitem{BGG}
G.~Bianchi, R.~J.~Gardner, and P.~Gronchi, Symmetrization in geometry, {\em Adv. Math.} {\bf 306} (2017), 51--88.

\bibitem{BS}
F.~Brock and A.~Y.~Solynin, An approach to symmetrization via polarization, {\em Trans. Amer. Math. Soc.} {\bf 352} (2000), 1759--1796.

\bibitem{BurZ80}
Y.~D.~Burago and V.~A.~Zalgaller, {\em Geometric inequalities}, Springer, New York, 1980.

\bibitem{Bur}
A.~Burchard, Cases of equality in the Riesz rearrangement inequality,
{\em Ann. of Math.} (2) {\bf 143} (1996), 499--527.

\bibitem{BF}
A.~Burchard and M.~Fortier, Random polarizations, {\em Adv. Math.} {\bf 234} (2013), 550--573.

\bibitem{Burt}
G.~R.~Burton, Rearrangements of functions, saddle points and uncountable families of steady configurations for a vortex, {\em Acta Math.} {\bf 163} (1989), 291--309.

\bibitem{CR}
K.~M.~Chong and N.~M.~Rice, {\em Equimeasurable Rearrangements of Functions}, Queen's University, Kingston, Ont., 1971.

\bibitem{DeG}
E.~De Giorgi, Sulla propriet\`{a} isoperimetrica dell'ipersfera, nella classe degli insiemi aventi frontiera orientata di misura finita, {\em Atti Accad. Naz. Lincei. Mem. Cl. Sci. Fis. Mat. Nat. Sez. I} {\bf 5} (1958), 33--44.

\bibitem{DekV17}
J.~Dekeyser and J.~Van Schaftingen, Approximation of symmetrizations by Markov processes, {\em Indiana Univ. Math. J.} {\bf 66} (2017), 1145--1172.

\bibitem{Dou}
R.~J.~Douglas, Rearrangements of functions with applications to meteorology and ideal fluid flow, in: {\em Large-Scale Atmosphere-Ocean Dynamics, Vol. I}, pp.~288--341, Cambridge University Press, Cambridge, 2002.

\bibitem{Dub87}
V.~N.~Dubinin, Transformation of condensers in space, {\em Dokl. Akad. Nauk SSSR} {\bf 296} (1987), 18--20; translation in {\em Soviet Math. Dokl.} {\bf 36} (1988), 217--219.

\bibitem{Dub93}
V.~N.~Dubinin, Capacities and geometric transformations of subsets in $n$-space, {\em Geom. Funct. Anal.} {\bf 3} (1993), 342--369.

\bibitem{Dub14}
V.~N.~Dubinin, {\em Condenser Capacities and Symmetrization in Geometric Function Theory}, Springer, Basel, 2014.

\bibitem{EG}
L.~C.~Evans and R.~F.~Gariepy, {\em Measure Theory and Fine Properties of
Functions}, CRC Press, Boca Raton, FL, 1992.

\bibitem{F00}
L.~E.~Fraenkel, {\em An Introduction to Maximum Principles and Symmetry in Elliptic Problems}, Cambridge University Press, Cambridge, 2000.

\bibitem{Gar02}
R.~J.~Gardner, The Brunn-Minkowski inequality, {\em Bull. Amer. Math. Soc.}
{\bf 39} (2002), 355--405.

\bibitem{Gar06}
R.~J.~Gardner, {\em Geometric Tomography}, second edition, Cambridge University
Press, New York, 2006.

\bibitem{GK}
R.~J.~Gardner and M.~Kiderlen, Operations between functions, {\em Comm. Anal. Geom.} {\bf 26} (2018), 787--855.

\bibitem{Goff}
C.~Goffman, On Lebesgue's density theorem, {\em Proc. Amer. Math. Soc.} {\bf 1} (1950), 384--388.

\bibitem{Gru07}
P.~M.~Gruber, {\em Convex and Discrete Geometry}, Springer, Berlin, 2007.

\bibitem{HS}
C.~Haberl and F.~E.~Schuster, General $L_p$ affine isoperimetric inequalities, {\em J. Differential Geom.} {\bf 83} (2009), 1--26.

\bibitem{HacP}
T.~Hack and P.~Pivovarov, Randomized Urysohn-type inequalities, {\tt arXiv:1910.11654}.

\bibitem{Hay}
W.~K.~Hayman, {\em Multivalent Functions}, second edition, Cambridge University Press, Cambridge, 1994.

\bibitem{Hen}
A.~Henrot, {\em Extremum Problems for Eigenvalues of Elliptic Operators}, Birkh\"{a}user, Basel, 2006.

\bibitem{Kaw}
B.~Kawohl, {\em Rearrangements and Convexity of Level Sets in PDE},  Lecture Notes in Mathematics, 1150, Springer, Berlin, 1985.

\bibitem{K06}
S.~Kesavan, {\em Symmetrization \& Applications}, World Scientific Publishing, Hackensack, NJ, 2006.

\bibitem{KP}
S.~G.~Krantz and H.~R.~Parks, {\em The Geometry of Domains in Space}, Birkh\"{a}user, Boston, MA, 1999.

\bibitem{Lac}
M.~Laczkovich, Two constructions of Sierpi\'nski and some cardinal invariants of ideals, {\em Real Anal. Exchange} {\bf 24} (1998/99), 663--676.

\bibitem{LL}
E.~H.~Lieb and M.~Loss, {\em Analysis}, second edition, American Mathematical Society, Providence, RI, 2001.

\bibitem{LYZ1}
E.~Lutwak, D.~Yang, and G.~Zhang, $L_p$ affine isoperimetric inequalities, {\em J. Differential Geom.} {\bf 56} (2000), 111--132.

\bibitem{LYZ2}
E.~Lutwak, D.~Yang, and G.~Zhang, Orlicz centroid bodies, {\em J. Differential Geom.} {\bf 84} (2010), 365--387,

\bibitem{LZ}
E.~Lutwak and G.~Zhang, Blaschke-Santal\'o inequalities,
{\em J. Differential Geom.} {\bf 47} (1997), 1--16.

\bibitem{Mag}
F.~Maggi, {\em Sets of Finite Perimeter and Geometric Variational Problems.
An Introduction to Geometric Measure Theory}, Cambridge University Press, Cambridge, 2012.

\bibitem{PS}
G.~P\'{o}lya and G.~Szeg\"{o}, {\em Isoperimetric Inequalities in Mathematical Physics}, Princeton University Press, Princeton, NJ, 1951.

\bibitem{RS}
C.~A.~Rogers and G.~C.~Shephard, Some extremal problems for convex bodies,
{\em Mathematika} {\bf 5} (1958), 93--102.

\bibitem{Rud87}
W.~Rudin, {\em Real and Complex Analysis}, third edition, McGraw-Hill, New York, 1987.

\bibitem{Sch14}
R.~Schneider, {\em Convex Bodies: The Brunn-Minkowski Theory}, second edition, Cambridge University Press, Cambridge, 2014.

\bibitem{Sie}
W.~Sierpi\'nski, Sur un th\'eor\`eme \'equivalent \`a l'hypoth\`ese du continu, {\em Bull. Intern. Acad. Sci. Cracovie, A} 1919, 1--3.

\bibitem{Sol1}
A.~Y.~Solynin, Continuous symmetrization via polarization, {\em Algebra i Analiz} {\bf 24} (2012), 157--222; translation in {\em St. Petersburg Math. J.} {\bf 24} (2013), 117--166.

\bibitem{Str}
D.~W.~Stroock, {\em Essentials of Integration Theory for Analysis}, Springer, New York, 2011.

\bibitem{Tal}
G.~Talenti, The art of rearranging, {\em Milan J. Math.} {\bf 84} (2016), 105--157.

\bibitem{VSPhD}
J.~Van Schaftingen; R\'earrangements et probl\`emes elliptiques non lin\'eaires. Thesis (Ph.D.)-Universit\'e Catholique de Louvain. 2002. 94 pp.

\bibitem{VS1}
J.~Van Schaftingen, Universal approximation of symmetrizations by polarizations, {\em Proc. Amer. Math. Soc.} {\bf 134} (2005), 177--186.

\bibitem{VS2}
J.~Van Schaftingen, Explicit approximation of the symmetric rearrangement by polarizations, {\em Arch. Math.} {\bf 93} (2009), 181--190.

\bibitem{VSW}
J.~Van Schaftingen and M.~Willem, Set transformations, symmetrizations and isoperimetric inequalities, in: {\em Nonlinear analysis and applications to physical sciences}, pp.~135--152, Springer Italia, Milan, 2004.

\bibitem{Wol}
V.~Wolontis, Properties of conformal invariants, {\em Amer. J. Math.} {\bf 74} (1952), 587--606.

\end{thebibliography}
\end{document}